\newtheorem{theo}{Theorem}[section]
\newtheorem{defin}[theo]{Definition}
\newtheorem{prop}[theo]{Proposition}
\newtheorem{lemm}[theo]{Lemma}
\newtheorem{rem}[theo]{Remark}
\numberwithin{equation}{section}
\newcommand{\al}{\alpha}
\newcommand{\be}{\beta}
\newcommand{\ga}{\gamma}
\newcommand{\Ga}{\Gamma}
\newcommand{\la}{\lambda}
\newcommand{\om}{\omega}
\newcommand{\Om}{\Omega}
\newcommand{\si}{\sigma}
\newcommand{\ep}{\epsilon }
\newcommand{\te}{\theta}
\newcommand{\De}{\Delta}
\newcommand{\de}{\delta}
\newcommand{\pa}{\partial}
\newcommand{\R}{{\mathbb R}^n}
\newcommand{\ri}{\rightarrow}
\newcommand{\Rn}{{\mathbb R}^{n-1}}
\newcommand{\na}{\nabla}
\newcommand{\RN}{{\mathbb R}^{n+1}}
\begin{document}
\baselineskip=18pt

\title[]{Global existence of    non-Newtonian incompressible fluid  in half space with nonhomogeneous initial-boundary data}

\
\author{Tongkeun Chang}
\address{Department of Mathematics, Yonsei University \\
Seoul, 136-701, South Korea}
\email{chang7357@yonsei.ac.kr}

\author{BumJa Jin}
\address{Department of Mathematics, Mokpo National University, Muan-gun 534-729,  South Korea }
\email{bumjajin@mokpo.ac.kr}

\begin{abstract}
In this  study, we investigate  the global existence of weak solutions  of non-Newtonian incompressible fluids governed by  \eqref{maineq2}. When $u_0 \in \dot   B^{\alpha-\frac{2}{p}}_{p,q}({\mathbb R}^{n}_+) \, \cap  \,\dot  B^{ 1 -\frac4{n+2}}_{\frac{n+2}2,\frac{n+2}2}({\mathbb R}^{n}_+) \,\cap \, \dot B^{ 1 +\frac{n}p}_{p,1} (\R_+)$ is given, we will find the weak solutions for the equation \eqref{maineq2} in the function space  $C_b ([ 0, \infty; \dot B^{\alpha -\frac2p}_{p,q} ({\mathbb R}^n_+)) \cap C_b (0, \infty; \dot B^{1 -\frac4{n+2}}_{\frac{n+2}2} (\R_+)) \cap L^\infty(0, \infty; \dot W^1_\infty(\R_+))$, $ n+2 < p < \infty, \,\,  1 \leq q \leq \infty, \,\, 1 + \frac{n+2}p < \alpha < 2$.  We show the existence of weak solutions in the  anisotropic  Besov spaces $\dot B^{\alpha, \frac{\alpha}2}_{p,q} (\R_+ \times (0, \infty))$ (see Theorem \ref{thm-navier}) and we   show the embedding   $\dot B^{\alpha, \frac{\alpha}2}_{p,q} (\R_+ \times (0, \infty) \subset C_b ([ 0, \infty; \dot B^{\alpha -\frac2p}_{p,q} ({\mathbb R}^n_+))$ (see Lemma \ref{lemma1208}). 
 
 For the global existence of solutions, we assume that the extra stress tensor $S$ is represented by $S({\mathbb A}) = {\mathbb F} ( {\mathbb A}) {\mathbb A}$, where   ${\mathbb F}(0) $ is  a uniformly elliptic matrix and  $ {\mathbb F} \in C^2(B(0,1))$, where $B(0,1)$ is open ball in ${\mathbb R}^{n\times n}$ whose center is origin and radius. is $1$. 
Note that  $S_1$, $S_2$ and $S_3$  introduced in \eqref{0207-1} satisfy our assumptions.\\

\noindent
 2000  {\em Mathematics Subject Classification:}  primary 35K61, secondary 76D07. \\

\noindent {\it Keywords and phrases: Non-Newtonian equations,  initial-boundary value,  half space. }

\end{abstract}

\maketitle

\section{\bf Introduction}
\setcounter{equation}{0}

In this   study, we investigate  the global existence of solutions for a   non-Newtonian incompressible fluid governed by the following system:  
\begin{align}\label{maineq2}
\begin{array}{l}\vspace{2mm}
u_t - {\rm div  }\,\big( S(Du) \big) + \na p =-{\rm div}(u\otimes u), \qquad {\rm div} \, u =0 \mbox{ in }
 \R_+ \times (0, \infty),\\
\hspace{30mm}u|_{t=0}= u_0, \quad u |_{x_n =0} = 0,
\end{array}
\end{align}
where $u_0$ is the   given initial  velocity and
 $u$ and $p$ are the unknown velocity and pressure, respectively.         Here, $ (D u)_{ij} = \frac12 ( \frac{\pa u_i}{\pa x_j} + \frac{\pa u_j}{\pa x_i})$, $ i, \, j = 1, 2, \cdots, n$ and $ S: \R \times \R \ri \R \times \R $ is the extra stress tensor. When $ S(Du) = 2 Du$, the equations \eqref{maineq2} become usual Navier-Stokes equations.

The standard models of $S(Du)$ are as follows:
\begin{align}\label{0207-1}
\notag S_1(Du)& = (\mu_0  +  \mu_1|Du|)^{d -2}Du, \\
\notag S_2(Du)& =  (\mu_0  + \mu_1 |Du|^2)^{\frac{d-2}2}Du,\\
S_3(Du) &= (\mu_0 + \mu_1|Du|^{d-2} ) Du,
\end{align}
where $\mu_0, \, \mu_1 >0$ are positive real numbers. Here,  $|{\mathbb A}| = \big( \sum_{1\leq i,j \leq n} |a_{ij}|^2  \big)^\frac12 $ for matrix ${\mathbb A} = (a_{ij})_{1 \leq ij \leq n}$.   We call the fluid shear-thickening   if $d > 2$ and shear-thinning   if $1 < d < 2$.

We review  the prior results regarding for the existence which are relevant to  our result. Lady$\check{\rm z}$henskaja (\cite{La0,La}) was the first to study the existence of weak solutions of \eqref{maineq2} in bounded domains with  no-slip boundary condition $( u|_{x_n} =0)$,  and  Nečas et al. also  studied this problem intensively. After them, the non-Newtonian incompressible fluids were studied by many mathematicians ( see \cite{BP, BK, DR,KMS, Pok} pertaining to the whole space and see \cite{Am, BD, BW,FMS, KKK, MNR, V, V2, VKR, Wol} for bounded domains  with no-slip boundary condition).

In the engineering
literatures,  the non-Newtonian fluids  under the no-slip condition are no longer well accepted,
as shown by  the extensive and thorough discussion in \cite{LBS}.

%

In smooth bounded domains,  the pure slip boundary condition case
was  studied by   Bulíček,   Málek  and   Rajagopal \cite{BMR} (also see \cite{MS}) and  the pure Neumann boundary condition case
was  studied by  Bothe and  Prüss \cite{BP} (also see \cite{S}).

In this study, we investigage the nonhomogeneous boundary problem for non-Newtonian imcompressible fluid ( $u|_{x_n =0}  \neq 0$). Over the past decade many mathematicians have studied  the Newtonian incompressible fluids   based on nonhomogeneous boundary data 
(See \cite{fernandes,amann1, amann2, CJ1, chang-jin, CJ2, farwig4,farwig6, farwig2, farwig3,grubb1, grubb2, grubb3, grubb,lewis,R, voss} and the references therein).  

Recently, Bae and Kang \cite{BK} showed the existence of a low  regular solutions of \eqref{maineq2} in $\R$ with assumptions $ S(Du) = {\mathbb F} (Du) Du$,  ${\mathbb F}({\mathbb A}) = F (|{\mathbb A}|^2)$, $ F(0) ={\mathbb I}$  and $F \in C^2([0, \infty))$, where ${\mathbb I}$ is identity matrix in $\R$.

Motivated by Bae and Kang's results,  in this study we  show the existence of  a weak solutions with minimal  regularity and decay conditions of $S(Du)$.  

We introduce our assumptions for $S(Du)$. We assume   that $S (Du)$ is represented by $S(Du) = {\mathbb F} ( Du) Du$, where ${\mathbb F} : \R \times \R \ri  {\mathbb R}^{n \times n} $ is satisfied   that \\

{(\bf A)} ${\mathbb F} \in C^2 ( B(0,1))$, where $B(0,1)$ is open unit ball in ${\mathbb R}^{n \times n}$ and  ${\mathbb F}( 0) $ is a uniformly elliptic matrix. \\

Note that    $S_1$, $S_2$ and $S_3$  introduced in \eqref{0207-1} satisfy our assumptions with 
$   F_1({\mathbb A}) =    (\mu_0  +  \mu_1|{\mathbb A}|)^{d -2}  {\mathbb I},$ $   F_2({\mathbb A}) =  (\mu_0  + \mu_1 |{\mathbb A}|^2)^{\frac{d-2}2}  {\mathbb I}$ and $
F_3({\mathbb A}) = (\mu_0 + \mu_1|{\mathbb A}|^{d-2} )  {\mathbb I}$.

Our main results are follows:
\begin{theo}
\label{thm-navier-2}
Let $S (Du) = {\mathbb F}(Du) Du$ and ${\mathbb F} : \R \times \R \ri  {\mathbb R}^{n \times n} $ satisfy the assumptions {(\bf A)}.   Let $n+2 < p< \infty$ and $ 1 \leq q \leq \infty$ such that  $1 + \frac{n+2}p < \al  < 2$.  Let
\begin{align*}
u_0 &  \in  \dot   B^{\al-\frac{2}{p}}_{p,q}({\mathbb
R}^{n}_+) \, \cap  \,\dot  B^{ 1 -\frac4{n+2}}_{\frac{n+2}2,\frac{n+2}2}({\mathbb R}^{n}_+) \,\cap \, \dot B^{ 1 +\frac{n}p}_{p,1} (\R_+)
\end{align*}
such that $ {\rm div} \, u_0 =0$ and $ u_0|_{\Rn \times (0, \infty)} =0$.  There is $\de > 0 $  such that if
\begin{align*}
&  \| u_0\|_{\dot  B^{1-\frac{4}{n+2}}_{\frac{n+2}2,\frac{n+2}2}({\mathbb R}^{n}_+)  }  + \| u_0 \|_{\dot B^{ 1 +\frac{n}p}_{p,1} (\R_+) }  + \| u_0\|_{\dot   B^{\al-\frac{2}{p}}_{p,q}({\mathbb R}^{n}_+)}    < \de,
\end{align*}
then,  \eqref{maineq2} has a solution $u$ satisfying 
\begin{align*}
 u\in    C_b ([0, \infty); \dot B^{\al -\frac2p}_{p,q} (\R_+))\, \cap \,   C_b([0, \infty); \dot W^{ 1 -\frac4{n+2}  }_{\frac{n+2}2 } ({\mathbb R}^{n}_+ ) ) \cap L^\infty(0,\infty, \dot W^1_\infty(\R_+) ).
\end{align*}
\end{theo}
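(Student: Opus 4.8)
The plan is to construct the solution by a fixed-point / successive-approximation argument built on the linear theory for the Stokes-type system with the constant-coefficient principal part $u_t - \mathrm{div}({\mathbb F}(0)Du) + \nabla p = f$, $\mathrm{div}\,u = 0$ in $\R_+ \times (0,\infty)$ with zero boundary data and initial data $u_0$. Since ${\mathbb F}(0)$ is uniformly elliptic, this linear problem is a standard parabolic system, and I would invoke the anisotropic Besov-space estimate of Theorem \ref{thm-navier}: for data in the appropriate trace class the solution lies in $\dot B^{\al,\frac{\al}{2}}_{p,q}(\R_+\times(0,\infty))$ with a norm bound by the data. Writing the full equation as a perturbation, the nonlinear map is
\begin{align*}
u \mapsto \Phi(u), \qquad \Phi(u) = \mathcal{L}^{-1}\Big( -\mathrm{div}(u\otimes u) + \mathrm{div}\big(({\mathbb F}(Du)-{\mathbb F}(0))Du\big)\Big) + (\text{linear solution with data } u_0),
\end{align*}
where $\mathcal{L}^{-1}$ is the solution operator of the linear problem with zero initial-boundary data. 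I would set up $\Phi$ on a small ball in the intersection space
\begin{align*}
X = \dot B^{\al,\frac{\al}{2}}_{p,q}(\R_+\times(0,\infty)) \,\cap\, (\text{the } \tfrac{n+2}{2}\text{-based anisotropic space}) \,\cap\, (\text{the } p,1\text{-based anisotropic space}),
\end{align*}
each factor corresponding to one of the three norms controlled in the statement, and show $\Phi$ is a contraction there when $\|u_0\|$ in the two smallness-norms is below $\de$.

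The three-norm structure is dictated by the estimates one needs: the $\dot B^{1-\frac{4}{n+2}}_{\frac{n+2}{2},\frac{n+2}{2}}$ piece is the natural scaling-critical space in which $\mathrm{div}(u\otimes u)$ is handled by a product/paraproduct estimate together with the Stokes smoothing — this is exactly the Navier–Stokes part and forces the smallness. The $\dot B^{1+\frac{n}{p}}_{p,1}$ piece gives control of $Du$ in $L^\infty_{x,t}$ via embedding (since $\al > 1 + \frac{n+2}{p}$, the anisotropic space embeds into a Hölder/Lipschitz class in $x$), which is what makes the nonlinear stress term tractable: by assumption \eqref{assumption}, for $\ep$ small one has $|({\mathbb F}(Du)-{\mathbb F}(0))Du - ({\mathbb F}(Dv)-{\mathbb F}(0))Dv| \le \ep|Du - Dv|$ whenever $|Du|,|Dv| < \de$, so on the small ball the stress perturbation is Lipschitz in $Du$ with arbitrarily small constant, hence $\mathcal{L}^{-1}\mathrm{div}(\cdots)$ is a contraction in the high-regularity factor. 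The remaining factor, the $\dot B^{\al-\frac{2}{p}}_{p,q}$-valued continuity in time, then comes for free from the embedding Lemma \ref{lemma1208}, $\dot B^{\al,\frac{\al}{2}}_{p,q}(\R_+\times(0,\infty)) \subset C_b([0,\infty);\dot B^{\al-\frac{2}{p}}_{p,q}(\R_+^n))$, once the solution is shown to lie in the anisotropic space.

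Concretely the steps are: (i) record the linear estimates for $\mathcal{L}^{-1}$ in each of the three anisotropic norms, and the estimate for the initial-data solution operator; (ii) prove the bilinear estimate $\|\mathcal{L}^{-1}\mathrm{div}(u\otimes v)\|_X \lesssim \|u\|_X\|v\|_X$ using Besov product rules and the parabolic gain of two derivatives; (iii) prove the "stress" estimate: if $u,v$ lie in the ball of radius $\rho \le \de$ in $X$ (so that $\|Du\|_{L^\infty},\|Dv\|_{L^\infty} < \de$ by embedding), then $\|\mathcal{L}^{-1}\mathrm{div}(({\mathbb F}(Du)-{\mathbb F}(0))Du - ({\mathbb F}(Dv)-{\mathbb F}(0))Dv)\|_X \le C\ep\|u-v\|_X$; (iv) combine (ii)–(iii) with $\|u_0\|$ small to get $\Phi:B_\rho \to B_\rho$ a contraction, extract the fixed point, and (v) read off the three stated regularity classes, using Lemma \ref{lemma1208} for the first, the standard embedding of the anisotropic Besov space into $C_b([0,\infty);\dot W^{1-\frac{4}{n+2}}_{\frac{n+2}{2}})$ for the second, and into $C_b([0,\infty);\dot B^{1+\frac{n}{p}}_{p,1})$ for the third. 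The main obstacle is step (iii): one must choose $\ep$ (hence $\de$) small \emph{after} the linear operator norm $\|\mathcal{L}^{-1}\mathrm{div}\|_{X\to X}$ is fixed, and must verify that the hypothesis $|Du|,|Dv|<\de$ genuinely holds on the ball — i.e. that the embedding constant times $\rho$ is below $\de$ — which couples the radius $\rho$, the smallness of $u_0$, and the $\ep$ in \eqref{assumption}; making this circle of inequalities consistent, and checking that $({\mathbb F}(Du)-{\mathbb F}(0))Du$ actually lies in the anisotropic Besov space with the claimed bound (not merely in an $L^p$ space), is the delicate point. A secondary technical issue is that the Stokes operator on the half-space, not just the heat semigroup, must be used, so the pressure/Helmholtz projection has to be carried through all the product estimates.
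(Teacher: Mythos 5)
Your overall strategy is the same as the paper's: reduce to the nonhomogeneous-boundary Theorem \ref{thm-navier} (with $g=0$), build successive approximations from the linear Stokes estimates of Theorem \ref{thm-stokes}, treat $(\mathbb{F}(Du)-\mathbb{F}(0))Du$ as a small perturbation via assumption \eqref{assumption}, and then read off the $C_b$-valued conclusions from the embedding Lemma \ref{lemma1208}. The triple-norm structure you identify (scaling-critical $\frac{n+2}{2}$-based norm for the bilinear term, $\dot B^{1+\frac{n+2}{p},\cdot}_{p,1}$ for the $L^\infty$ control of $Du$, $\dot B^{\alpha,\alpha/2}_{p,q}$ for the stated regularity) matches the paper's Section 5.1 exactly.

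The genuine gap is in your step (iii). You propose to show that $\Phi$ is a contraction in the \emph{full} intersection space $X$, and in particular that $u\mapsto \mathcal{L}^{-1}\mathrm{div}\big((\mathbb{F}(Du)-\mathbb{F}(0))Du\big)$ has small Lipschitz constant in the $\dot B^{\alpha-1,\frac{\alpha-1}{2}}_{p,q}$-norm. But assumption \eqref{assumption} is a \emph{pointwise} Lipschitz-at-zero condition; it gives $|\sigma(A)A-\sigma(B)B|\le\epsilon|A-B|$ and hence controls $L^r$-differences such as $\|\sigma(Du)Du-\sigma(Dv)Dv\|_{L^{(n+2)/2}}\le\epsilon\|Du-Dv\|_{L^{(n+2)/2}}$, but it does \emph{not} control Besov-seminorm differences: the difference quotient of $\sigma(Du)Du-\sigma(Dv)Dv$ in $(x,t)$ involves terms like $(\sigma(Du(x))-\sigma(Du(y)))Du(x)$ which require a modulus of continuity of $\sigma$ itself (e.g.\ $D\sigma$ Lipschitz or H\"older near zero), not just of $\sigma(\cdot)\cdot$. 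Lemma \ref{lemma0716} gives a Besov bound for $\|\sigma(G)G\|_{\dot B^{s,s/2}_{p,q}}$ on a single function, not for differences of two such expressions. The paper sidesteps this precisely by \emph{not} proving a contraction in $X$: Section 5.2 establishes that $\{u^m\}$ is Cauchy only in $\dot H^{1,1/2}_{(n+2)/2}$ (where the $L^{(n+2)/2}$-difference estimate from \eqref{assumption} suffices), while the uniform bounds \eqref{230524-1} in the two stronger norms are obtained by induction without ever estimating differences there; the limit then inherits those bounds by weak-$*$ compactness and uniqueness of the limit. If you want a fixed-point argument instead, you must either weaken the metric on $X$ to the $\dot H^{1,1/2}_{(n+2)/2}$-norm (keeping the ball defined by all three norms), or strengthen \eqref{assumption} to a $C^1$-type smallness that makes the Besov-difference estimate available.
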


To prove  Theorem \ref{thm-navier-2},  we show the existence of the  solutions of  the equation \eqref{maineq2} in anisotropic Besov spaces  $  \dot B^{\al,\frac{\al}2}_{p,q} (\R_+ \times (0, \infty))$ (see Lemma \ref{lemma1208}).   In showing the existence of the  solutions,   the anisotropic Besov spaces   have several merits compared to the function spaces  $C_b ([0, \infty); \dot B^{\al -\frac2p}_{p,q} (\R_+))$. The  parabolic type singular intergral operators are bounded in  $ \dot B^{\al,\frac{\al}2}_{p,q} (\R_+ \times (0, \infty))$  but  not in  $C_b ([0, \infty); \dot B^{\al -\frac2p}_{p,q} (\R_+))$.  This means that  we can represent the solutions with  the integral forms and using it  we can obtain $ \dot B^{\al,\frac{\al}2}_{p,q} (\R_+ \times (0, \infty))$-norm estimates of solutions (see Section \ref{Proof1}).

We consider equations \eqref{maineq2}  with non-zero boundary condition, $ u|_{x_n =0}  \not= 0$.   Theorem \ref{thm-navier-2} is special case of the following results.
\begin{theo}
\label{thm-navier}
Let $S (Du) = {\mathbb F}(Du) Du$ and ${\mathbb F}: \R \times \R \ri  {\mathbb R}^{n \times n} $ satisfy the assumptions {\bf (A)}.  Let $n+2 < p< \infty$ and $ 1 \leq q \leq \infty$ such that  $1 + \frac{n+2}p < \al  < 2$.  Let
\begin{align*}
u_0 &  \in  \dot   B^{\al-\frac{2}{p}}_{p,q}({\mathbb
R}^{n}_+) \, \cap  \,\dot  B^{ 1 -\frac4{n+2}}_{\frac{n+2}2,\frac{n+2}2}({\mathbb R}^{n}_+) \,\cap \, \dot B^{ 1 +\frac{n}p}_{p,1} (\R_+),\\
g & \in   \dot B^{ \al-\frac{1}{p},\frac{\al} 2-\frac{1}{2p}}_{p,q}({\mathbb R}^{n-1} \times (0, \infty)) \,\cap \,\dot B^{ 1-\frac{2}{n+2},\frac{1}2-\frac{1}{n+2}}_{\frac{n+2}2 , \frac{n+2}2}({\mathbb R}^{n-1} \times (0, \infty))\, \cap  \,\dot  B^{ 1 + \frac{n+1}p,\frac12 +\frac{n+1}{2p}}_{p,1}({\mathbb R}^{n-1} \times (0, \infty)), \\
g_n  & \in \dot A^{\al -\frac1p, \frac{\al}2 -\frac1{2p} }_{p,q} ( \Rn \times (0, \infty))\, 
\cap \, \dot A^{1-\frac2{n+2}, \frac12 -\frac1{n+2} }_{\frac{n+2}2,\frac{n+2}2} ( \Rn \times (0, \infty) ) \, 
\cap  \,\dot  A^{ 1 +\frac{n+1}p, \frac12 +\frac{n+1}{2p}}_{p,q} ( \Rn \times (0, \infty)  ) 
\end{align*}
such that $ u_0$ and $g$ satisfy  the following conditions;
 \begin{align}\label{compatibility}
 {\rm div }\, u_0 =0, \quad  \mbox{and} \quad  g|_{t =0} = u_0|_{x_n=0}.
 \end{align}
If
\begin{align*}
&  \| u_0\|_{\dot  B^{1-\frac{4}{n+2}}_{\frac{n+2}2,\frac{n+2}2}({\mathbb R}^{n}_+)  }  + \| u_0 \|_{\dot B^{ 1 +\frac{n}p}_{p,1} } 
  +  \| u_0\|_{\dot   B^{\al-\frac{2}{p}}_{p,q}({\mathbb R}^{n}_+)}\\
  &   +  \| g \|_{\dot B^{ 1-\frac{2}{n+2},\frac{1}2-\frac{1}{n+2}}_{\frac{n+2}2,\frac{n+2}2 }({\mathbb R}^{n-1} \times (0, \infty))  } + \| g\|_{ \dot  B^{ 1 + \frac{n +1}p, \frac12 +\frac{n+1}{2p}}_{p,1}({\mathbb R}^{n-1} \times (0, \infty))} + \| g \|_{ \dot B^{ \al-\frac{1}{p},\frac{\al} 2-\frac{1}{2p}}_{p,q}({\mathbb R}^{n-1} \times (0, \infty))  } \\
  & \qquad + \| g_n \|_{A^{1-\frac2{n+2}, \frac12 -\frac1{n+2} }_{\frac{n+2}2,\frac{n+2}2} ( \R \times (0, \infty) ) } + \| g_n \|_{ \dot  A^{ 1 +\frac{n+1}p, \frac12 +\frac{n+1}{2p}}_{p,q} ( \R \times (0, \infty)  ) }  +  \| g_n \|_{\dot A^{\al -\frac1p, \frac{\al}2 -\frac1{2p} }_{p,q} ( \Rn \times (0, \infty))}\\
  & < \de
\end{align*}
for small $\de > 0$,  then the equations   \eqref{maineq2} with the boundary condition $u|_{x_n =0} = g$ has a solution $u$ satisfying 
\begin{align}
 u\in    \dot B^{\al,\frac{\al}2}_{p,q} (\R_+ \times (0, \infty)) \, \cap \,   \dot W^{ 1, \frac12   }_{\frac{n+2}2 } ({\mathbb R}^{n}_+ \times (0, \infty ) ) \cap L^\infty(0, \infty; \dot W^1_\infty(\R_+)).
 \end{align}
 Moreover,  there is $\de_0> 0$ such that if $u_1$ is another solution of \eqref{maineq2} satisfying $ u_1 \in \dot W^{ 1, \frac12   }_{\frac{n+2}2 } ({\mathbb R}^{n}_+ \times (0, \infty ) ) \cap L^\infty(0, \infty; \dot W^1_\infty(\R_+))$ with  $ \| u_1\|_{\dot W^{ 1, \frac12   }_{\frac{n+2}2 } ({\mathbb R}^{n}_+ \times (0, \infty ) ) } + \| Du_1\|_{L^\infty (\R_+ \times (0, \infty))} < \de_0$, then $ u = u_1$.

\end{theo}
The function spaces are introduced in Section 2.

We note that the homogeneous Besov spaces $ \dot B^{\al, \frac{\al}2}_{p,q} (\R_+ \times (0, \infty))$ are not complete if  $ \al > \frac{n+2}p$.  
Because we use the iteration method to find solutions, it looks like  the function space   $\dot B^{\al,\frac{\al}2}_{p,q} (\R_+ \times (0, \infty)), \,\, \al > 1 +  \frac{n+2}p$ are not reasonable for the  solution spaces. To overcome it, we add the complete  function space  $  \dot W^{ 1, \frac12   }_{\frac{n+2}2 } ({\mathbb R}^{n}_+ \times (0, \infty ) )$ (see Theorem \ref{theo0715-1}). Our approximate solutions $u^m$ are uniformly  bounded and Cauchy sequence  in  $\dot B^{\al,\frac{\al}2}_{p,q} (\R_+ \times (0, \infty))\cap  \dot W^{ 1, \frac12   }_{\frac{n+2}2 } ({\mathbb R}^{n}_+ \times (0, \infty ) )$. Then, by completeness of $ \dot B^{\al,\frac{\al}2}_{p,q} (\R_+ \times (0, \infty))\cap \dot W^{ 1, \frac12   }_{\frac{n+2}2 } ({\mathbb R}^{n}_+ \times (0, \infty ) )$, $u^m$ converges to $u$ in $ \dot B^{\al,\frac{\al}2}_{p,q} (\R_+ \times (0, \infty))\cap\dot W^{ 1, \frac12   }_{\frac{n+2}2 } ({\mathbb R}^{n}_+ \times (0, \infty ) )$.

We note that the restriction $\al < 2$ (regularity of solutions) is based on the assumptions of $S(Du)$. With an improvement in the regularity of ${\mathbb F}$,  the regularity of the solutions in  Theorem  \ref{thm-navier} will be also improved (see \cite{KKK}).

For simplicity, we assume that ${\mathbb F} (0) = {\mathbb I}$ is the  identity matrix.  Let 
\begin{align}\label{0501-2}
\si ({\mathbb A}) : = {\mathbb F} ({\mathbb A}) - {\mathbb F}(0).
\end{align}
Accordingly, the first equations in \eqref{maineq2} are denoted by
\begin{align*}
u_t  -\De u  +\na p= {\rm div  }\,\big( \si(Du) D u - u \otimes u\big).
\end{align*}

Thus, for the proof of Theorem \ref{thm-navier}, the following initial-boundary value problem of the Stokes equations in $\R_+\times (0,\infty)$ is necessary:
\begin{align}\label{maineq-stokes}
\begin{array}{l}\vspace{2mm}
w_t - \De w + \na p =f, \qquad {\rm div} \, w =0 \mbox{ in }
 \R_+\times (0,\infty),\\
\hspace{30mm}w|_{t=0}= u_0, \qquad  w|_{x_n =0} = g,
\end{array}
\end{align}
where $f=\mbox{div}{\mathcal F}$($= {\rm div} \, \big( \si(Du) Du - u \otimes u  \big)$.

The following theorem states our results on the unique solvability of the Stokes equations \eqref{maineq-stokes}.
\begin{theo} 
\label{thm-stokes}
Let $1 < p< \infty$, $ 1 \leq q \leq \infty$  and   $1  <   \al  <  2$.
\begin{itemize}
\item[(1)]
Let 
\begin{align*}
u_0  \in  \dot   B^{\al-\frac{2}{p}}_{p,q}({\mathbb
R}^{n}_+), \quad  {\mathcal F}   \in L^p (0, \infty; \dot B^{\al-1 }_{p,q}(\R_+ )),\\
g  \in   \dot B^{ \al-\frac{1}{p},\frac{\al} 2-\frac{1}{2p}}_{p,q}({\mathbb R}^{n-1} \times (0, \infty)), \quad 
g_n   \in \dot A^{\al -\frac1p, \frac{\al}2 -\frac1{2p} }_{p,q} ( \Rn \times (0, \infty))
\end{align*}
such that $ u_0$ and $g$ satisfy \eqref{compatibility}.
Accordingly,  equation \eqref{maineq-stokes} has a solution
 $w\in   \dot B^{\al, \frac{\al}2 }_{p,q} (\R_+ \times (0, \infty))$   with the following inequalities
\begin{align} \label{240718-1-1}
\begin{split}
  \| w\|_{\dot B^{\al, \frac{\al}2}_{p,q}(\R_+ \times (0, \infty))}
& \leq c \big(  \| u_0\|_{ \dot B_{p,q}^{\al -\frac2p} (\R_+)}  + \| g\|_{  \dot B^{ \al-\frac{1}{p},\frac{\al} 2-\frac{1}{2p}}_{p,q}({\mathbb R}^{n-1} \times (0, \infty)) }\\
& \quad  + \| g_n \|_{\dot A^{\al -\frac1p, \frac{\al}2 -\frac1{2p} }_{p,q} ( \Rn \times (0, \infty))}   + \| {\mathcal F}\|_{\dot B^{\al-1,\frac{\al }2 -\frac12 }_{p,q}(\R_+ \times (0 ,\infty ))}\big).
\end{split}
\end{align}

\item[(2)]
Let 
\begin{align*}
u_0  \in  \dot   B^{1-\frac{2}{p}}_{p,p}({\mathbb
R}^{n}_+), \quad  {\mathcal F}   \in L^p ( \R_+ \times (0, \infty )),\\
g  \in   \dot B^{ 1-\frac{1}{p},\frac{1} 2-\frac{1}{2p}}_{p,p}({\mathbb R}^{n-1} \times (0, \infty)), \quad 
g_n   \in \dot A^{1 -\frac1p, \frac{1}2 -\frac1{2p} }_{p,p} ( \Rn \times (0, \infty))
\end{align*}
such that $ u_0$ and $g$ satisfy \eqref{compatibility}.
Then,   equation \eqref{maineq-stokes} has a  solution
 $w\in   \dot W^{1, \frac{1}2 }_{p} (\R_+ \times (0, \infty))$   with the following inequalities
\begin{align} \label{240718-2-2}
\begin{split}
  \| w\|_{\dot W^{1, \frac{1}2}_{p}(\R_+ \times (0, \infty))}
& \leq c \big(  \| u_0\|_{ \dot B_{p,p}^{1-\frac2p} (\R_+)}  + \| g\|_{  \dot B^{ 1-\frac{1}{p},\frac{1} 2-\frac{1}{2p}}_{p,p}({\mathbb R}^{n-1} \times (0, \infty)) }\\
& \quad  + \| g_n \|_{\dot A^{1 -\frac1p, \frac{1}2 -\frac1{2p} }_{p,p} ( \Rn \times (0, \infty))}   + \| {\mathcal F}\|_{L^p ( \R_+ \times (0, \infty ))}\big).
\end{split}
\end{align}
In particular, if $ p < n+2$, then the solution $w $ of \eqref{maineq-stokes} is a unique in $ \dot W^{1, \frac{1}2 }_{p} (\R_+ \times (0, \infty))$.

\end{itemize}
\end{theo}

We organize this paper as follows.
In Section \ref{notation}, we introduce  the function spaces, and the  definitions of the weak solutions of Stokes equations and non-Newtonian Navier-Stokes equations.  In Section \ref{preliminary},   the various  estimates of operators related to  Newtonian   and Gaussian kernels are provided.  
In Section \ref{Proof1}, we  complete the proof of Theorem \ref{thm-stokes}.  In Section \ref{nonlinear}, we provide the proof of Theorem \ref{thm-navier}  by applying the estimates in  Theorem \ref{thm-stokes} to the  approximate solutions.

\section{Notations and Definitions}
\label{notation}
\setcounter{equation}{0}
The points of spaces $\Rn$ and $\R$ are denoted by $x'$ and $x=(x',x_n)$, respectively.
In addition, multiple derivatives are denoted by $ D^{\be}_x D^{m}_t = \frac{\pa^{|\be|}}{\pa x^{\be}} \frac{\pa^{m} }{\pa t}$ for multi-index
$ \be$ and nonnegative integer $ m$. Throughout this paper, we denote various generic constants by using $c$.
Let $\R_+=\{x=(x',x_n): x_n>0\}$ and  $\overline{\R_+}=\{x=(x',x_n): x_n\geq 0\}$.

For the Banach space $X$, we denote the  usual Bochner space by $L^p(0, \infty;X), 1\leq p\leq \infty$.
For $0< \theta<1$ and $1\leq p\leq \infty$,  we denote   the complex interpolation and  the real interpolation  space of the normed space $X$ and $Y$ as $[X, Y]_\te$ and   $(X,Y)_{\theta,p}$, respectively.
For $1\leq p\leq \infty$,  $p'=\frac{p}{p-1}$.

 Let   $\Omega$ be either    $\R$, $\Rn$, or $\R_+$.
 Let $1\leq p\leq \infty$ and $k$ be a nonnegative integer.
 The norms of usual  Lebesgue space $L^p(\Omega)$, the usual   homogeneous Sobolev space   $  \dot W^k_p(\Omega)$  and  the  usual homogeneous  Besov space   $ \dot B^s_{p,q} (\Om), \,  1\leq p\leq \infty$  are written as $\|\cdot\|_{L^p(\Omega)}$,  $\|\cdot\|_{ \dot W^k_p(\Omega)}$ and $\|\cdot\|_{ \dot B^k_{p,q}(\Omega)}$, respectively.






Now, we introduce  anisotropic  Besov spaces and their properties (See
Chapter 5 of \cite{Triebel2}, and Chapter 3 of \cite{amman-anisotropic} for the definition of   spaces and their properties, although different notations are used in the books).

Let $s\in {\mathbb R}$ and $1\leq p \leq \infty$. We define an
anisotropic homogeneous Sobolev space $\dot W^{s, \frac{s}2}_p ({\mathbb R}^{n+1})$, $ n \geq 1$ by
\begin{eqnarray*}
\dot W^{s, \frac{s}2}_p  ({\mathbb R}^{n+1}) = \{ f \in {\mathcal S}'({\mathbb
R}^{n+1}) \, | \, f  = h_{s} * g, \quad \mbox{for some} \quad g   \in L^p ({\mathbb R}^{n+1}) \}
\end{eqnarray*}
with norm $\|f\|_{\dot W^{s, \frac{s}2}_p  ({\mathbb R}^{n+1})} : = \| g \|_{L^p({\mathbb R}^{n+1})} \,  = \|  h_{-s} * f  \|_{L^p({\mathbb R}^{n+1})},$
 where
$*$ is a convolution in ${\mathbb R}^{n+1}$ and ${\mathcal S}^{'}({\mathbb
R}^{n+1})$
 is the dual space of the Schwartz space
${\mathcal S}({\mathbb R}^{n+1})$.
Here, $h_s $  is a distribution in ${\mathbb R}^{n+1}$ whose Fourier transform
in ${\mathbb R}^{n+1}$ is defined by
\begin{eqnarray*}
{\mathcal F}_{x,t} ( h_{s}) (\xi,\tau) = c_s(  4\pi^2 |\xi|^2 + 2\pi  i
\tau)^{-\frac{s}{2}}, \quad (\xi, \tau) \in {\mathbb R}^n \times {\mathbb R},
\end{eqnarray*}
where ${\mathcal F}_{x,t} = {\mathcal F}$ is the Fourier transform   and ${\mathcal F}^{-1}$ is  the inverse Fourier tranfsorm in $\RN$. respectively.  We alse denote $\widehat{\cdot}$ as the  Fourier transform in $\RN$. 
Here,  the function $(  4\pi^2 |\xi|^2 + i
\tau)^{-\frac{s}{2}}$ is defined  as  $e^{-\frac{s}2 \ln(4\pi^2 |\xi|^2 + i\tau)}$, where the branch line of the logarithm that  is a negative  real line for real arguments is used.

Let $\displaystyle D^\frac12_t f(t) = \frac1{\sqrt{\pi}}  D_t \int_{-\infty}^t \frac{f(s)}{(t -s)^\frac12} ds$ and $D^{k  +\frac12}_tf = D^\frac12_t D^k_tf$. Note that $\widehat{D^\frac12_t f} (\tau) = \big(a_0 + i b_0 sign(\tau) \big) |\tau|^\frac12 \hat{f}(\tau)$ for certain complex numbers $a_0$ and $b_0$ (see Section 2 in \cite{CK}).
Using the multiplier theorem for $L^p({\mathbb R}^{n+1})$ (see  Section 2 in \cite{CK} or Section 6 in \cite{BL}), in  the case  $\alpha=k \in {\mathbb N} \cup \{ 0\}$,  
\begin{align*}
 \|f\|_{\dot W^{k, \frac{k}2}_p  ({\mathbb R}^{n+1})  } & \approx \sum_{|\be| + l = k} \| D_{x}^\be D^{\frac12 l}_{t}   f\|_{L^p ({\mathbb R}^{n+1})}.
\end{align*}
(See Theorem \ref{proofinR}.) 
In particular,  $\displaystyle\| f\|_{\dot W^{0,0}_p({\mathbb R}^{n+1})}  = \| f\|_{ L^p({\mathbb R}^{n+1})}$ and 
\begin{align*}
\| f\|_{\dot W^{1, \frac12 }_{p}({\mathbb R}^{n+1}) }
            =  \| D_t^{\frac12 } f\|_{L^p({\mathbb R}^{n+1})}
               + \|D_x f\|_{L^p({\mathbb R}^{n+1})},\\
\| f\|_{\dot W^{2, 1 }_{p}({\mathbb R}^{n+1}) }
            =  \| D_t  f\|_{L^p({\mathbb R}^{n+1})}
               + \|D^2_x f\|_{L^p({\mathbb R}^{n+1})}.
\end{align*}
See  \cite{CK}.

We fix a Schwartz function $\phi\in {\mathcal S} ({\mathbb R}^{n+1})$ satisfying $\hat{\phi}(\xi,\tau) > 0$ on $\frac12 < |\xi| + |\tau|^\frac12 < 2$, $\hat{\phi}(\xi,\tau)=0$ elsewhere, and
$\sum_{j=-\infty}^{\infty} \hat{\phi}(2^{-j}\xi, 2^{-2j}\tau) =1$ for $(\xi,\tau) \neq 0$.
Let
\begin{align*}
\widehat{\phi_j}(\xi,\tau) &:= \widehat{\phi}(2^{-j} \xi, 2^{-2j}\tau) \quad (j = 0, \pm 1, \pm 2 , \cdots).
\end{align*}

We define the homogeneous anisotropic Besov space $\dot B^{s, \frac{s}2}_{p,q} ({\mathbb R}^{n+1})$  as
\begin{align*}
\dot B^{s, \frac{s}2}_{p,q} ({\mathbb R}^{n+1}) = \{ f \, | \, \| f\|_{\dot B^{s, \frac{s}2}_{p,q} ({\mathbb R}^{n+1})  } < \infty \}, 
\end{align*}
where
\begin{align*}
\| f\|_{\dot B^{s, \frac{s}2}_{p,q} ({\mathbb R}^{n+1})} & =  (\sum_{ -\infty<  j < \infty}  2^{q s j}\| f* \phi_j \|^q_{L^p ({\mathbb R}^{n+1})} )^\frac1q,
\end{align*}
where $*$ is   the $n +1$ dimensional convolution.

\begin{rem}\label{rem0712}
\begin{itemize}
\item[(1)]
We consider  $\dot W^{s,\frac{s}2}_p (\RN)$ and $\dot B^{s, \frac{s}2}_{p,q} (\RN)$ as the quotient spaces with polynomial space with respect to $(x,t)$ and so $\dot H^s_p (\RN)$ and $\dot B^{s, \frac{s}2}_{p,q} (\RN)$ are normed space.

\item[(2)]
For $ s \in {\mathbb R}$, $1 < p < \infty$, the anisotropic  homogeneous Sobolev space  $\dot W^{s, \frac{s}2}_p (\RN)$ norm is equivalent to 
\begin{align}\label{0712-5}
  \| {\mathcal F}^{-1} \big( \sum_{-\infty< k < \infty} (  4\pi^2 |\xi| +2\pi i \tau)^\frac{s}2  \hat \phi(2^{-k} \xi, 2^{-2k}\tau) \hat{f}\big)\|_{L^p (\RN)}.
\end{align}
(See Appendix \ref{appendix240721})

\item[(3)]
Note that   $\dot W^{s,\frac{s}2}_p (\RN)$ and $\dot B^{s, \frac{s}2}_{p,q} (\RN)  $ are   complete spaces  for  $  0< s< \frac{n+2}p$ (see Theorem \ref{proofinR}).

\end{itemize}
\end{rem}
 
The properties of the anisotropic homogeneous  spaces are comparable with the properties of usual  homogeneous spaces. In particular, the following properties will be used later.
\begin{prop}
\label{prop2}
\begin{itemize}
\item[(1)]
For  $1 \leq  p, \,\, q_0,\,\, q_1,\,\,  r\leq \infty$ and $ s_1, \, s_2 \in {\mathbb R}$,
\begin{align*}
&( \dot W^{ s_0, \frac{s_0}2}_p   ({\mathbb R}^{n+1}),   \dot W^{ s_1, \frac{s_1}2}_p ({\mathbb R}^{n+1} ))_{\te, r} = \dot B^{s, \frac{s}2}_{p,r} ({\mathbb R}^{n+1}),  \\
&  (\dot B^{s_0, \frac{s_0}2 }_{p,q_0}   ({\mathbb R}^{n+1}),   \dot B^{s_1, \frac{s}2 }_{p,q_1} ({\mathbb R}^{n+1} ))_{\te, r} = \dot B^{s, \frac{s}2}_{q,r} ({\mathbb R}^{n+1}),   \\
&[ \dot W^{ s_0, \frac{s_0}2}_p   ({\mathbb R}^{n+1}),   \dot W^{ s_1, \frac{s_1}2}_p ({\mathbb R}^{n+1} )]_{\te} = \dot W^{s, \frac{s}2}_{p} ({\mathbb R}^{n+1}),  
\end{align*}
where $0 < \te<1$ and  $s = s_0  (1 -\te) + s_1 \te$.

\item[(2)]
Suppose that $ 1 \leq p_0\leq p_1  \leq \infty, \,  1 \leq r_0\leq r_1  \leq \infty$, and $ s_0\geq s_1$ with $ s_0 - \frac{n+2}{p_0} =s_1 - \frac{n+2}{p_1}$.
Accordingly, the following inclusions hold
\begin{align*}
\dot  W^{ s_0,  \frac{s_0}2  }_{p_0} ({\mathbb R}^{n+1} ) \subset   \dot W^{s_1, \frac{s_1}2  }_{p_1} ({\mathbb R}^{n+1}), \quad 1 < p, \,\, p_1 < \infty,\\
\dot  B^{ s_0, \frac{ s_0}2  }_{p_0, r_0} ({\mathbb R}^{n+1} ) \subset   \dot B^{s_1,  \frac{s_1}2  }_{p_1, r_1} ({\mathbb R}^{n+1}), \quad 1 \leq p_0, \,\, p_1 \leq \infty.
\end{align*}

\item[(3)]
For $0 < s$ and $ 1  < p  <  \infty$,
\begin{align*}
\dot W^{s,\frac{s}2}_{p} ({\mathbb R}^{n+1})   &=L^p ({\mathbb R}; \dot W^{s}_{p} (\R)) \cap L^p (\R; \dot W^{\frac{s}2}_{p}({\mathbb R})).
 \end{align*}
 
 \item[(4)]
\ Let $ 1 < p, \, q < \infty$ and $ 0 < s < 1$. The $\dot B_{p,q}^{s, \frac{s}2} ({\mathbb R}^{n+1})$-norm of $f$ is equivalent to 
\begin{align*}
\| f\|_{\dot B^{s,\frac{s}2}_{p,q} (\RN) } & =   \Big(   \int_{{\mathbb R}^{n+1}}  
\frac1{(|y| +|\tau|^\frac12)^{n+2  + ps  } } \big( \int_{{\mathbb R}^{n+1}}  |f(x +y,t+\tau) - f (x,t)|^p  dxdt \big)^{\frac{q}p}  dyd\tau  \Big)^\frac1q.
\end{align*}
 
 \item[(5)]
If   $f \in   \dot B_{p,q}^{s, \frac{s}2} ({\mathbb R}^{n+1})$ or $f \in   \dot W_{p}^{s, \frac{s}2} ({\mathbb R}^{n+1}), \, s > \frac{2}p$, then $f|_{t =0 } \in  \dot  B_{p,q}^{s -\frac{2}p }(\R )$ with
\begin{align*}
\| f|_{t=0} \|_{ \dot B_{p,q}^{s -\frac2p} (\R)} \leq c \| f|\|_{ \dot B^{s, \frac{s}2}_{p,q} ({\mathbb R}^{n+1})}, \quad 
\| f|_{t=0} \|_{ \dot B_{p,p}^{s -\frac2p} (\R)} \leq c \| f|\|_{ \dot W^{s, \frac{s}2}_{p} ({\mathbb R}^{n+1})}.
\end{align*}

\end{itemize}
\end{prop}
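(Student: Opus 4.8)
The plan is to derive all five assertions from the parabolic Littlewood--Paley decomposition $\{\phi_j\}_{j\in{\mathbb Z}}$ together with three standard devices. The first is the \emph{lifting operator} $L_\sigma f:=h_{-\sigma}*f$, with symbol $(4\pi^2|\xi|^2+i\tau)^{\sigma/2}$, which by the parabolic multiplier theorem recalled in Section~\ref{notation} is an isometric isomorphism $\dot W^{s,\frac s2}_p(\RN)\to\dot W^{s-\sigma,\frac{s-\sigma}2}_p(\RN)$ and a topological isomorphism $\dot B^{s,\frac s2}_{p,q}(\RN)\to\dot B^{s-\sigma,\frac{s-\sigma}2}_{p,q}(\RN)$; this lets one normalise the smoothness index in every statement. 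The second is the pair of parabolic Bernstein/Nikolskii inequalities: if $\mathrm{supp}\,\widehat g\subset\{|\xi|+|\tau|^{1/2}\le C2^j\}$ then $\|g\|_{L^{p_1}(\RN)}\le c\,2^{j(n+2)(\frac1{p_0}-\frac1{p_1})}\|g\|_{L^{p_0}(\RN)}$ and $\sup_{t_0\in{\mathbb R}}\|g(\cdot,t_0)\|_{L^p(\R)}\le c\,2^{2j/p}\|g\|_{L^p(\RN)}$, the former because that parabolic frequency set has measure $\sim2^{j(n+2)}$ (so $n+2$ plays the role of the homogeneous dimension), the latter by Nikolskii's inequality applied to the $L^p(\R)$--valued, temporally band--limited map $t\mapsto g(\cdot,t)$. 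The third is the square--function norm $\|f\|_{\dot W^{s,\frac s2}_p}\approx\|(\sum_j2^{2sj}|\phi_j*f|^2)^{1/2}\|_{L^p(\RN)}$, a consequence of the vector--valued multiplier theorem. All of these are the parabolic analogues of facts proved in \cite{Triebel2,amman-anisotropic} for inhomogeneous spaces, and the passage to the homogeneous scale is the routine one (summation over all $j\in{\mathbb Z}$, working modulo parabolic polynomials).

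For~(1) I would use the retraction/coretraction method. The map $\mathcal R f:=(\phi_j*f)_{j\in{\mathbb Z}}$ is bounded from $\dot B^{s,\frac s2}_{p,q}(\RN)$ into the weighted sequence space $\ell^q_s(L^p)$ and (by the square--function characterization) from $\dot W^{s,\frac s2}_p(\RN)$ into the Triebel--Lizorkin sequence space $L^p(\RN;\ell^2_s)$; it admits the common left inverse $\mathcal S((c_j)):=\sum_j\widetilde\phi_j*c_j$, where $\widehat{\widetilde\phi_j}\equiv1$ on $\mathrm{supp}\,\widehat{\phi_j}$ and is supported in a slightly enlarged parabolic annulus, and $\mathcal S$ is bounded on both families with $\mathcal S\mathcal R=\mathrm{Id}$. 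Since interpolation functors commute with retracts, the two identities in~(1) (where $s_0\ne s_1$ is understood for the real--interpolation one) reduce to the classical sequence--space facts $[L^p(\ell^2_{s_0}),L^p(\ell^2_{s_1})]_\theta=L^p(\ell^2_s)$ and $(L^p(\ell^2_{s_0}),L^p(\ell^2_{s_1}))_{\theta,r}=\ell^r_s(L^p)$, the latter being the ``change of $\ell$--geometry'' under real interpolation across a genuine shift of the smoothness weight; transporting these back through $\mathcal S,\mathcal R$ gives $[\dot W^{s_0}_p,\dot W^{s_1}_p]_\theta=\dot W^s_p$ and $(\dot W^{s_0}_p,\dot W^{s_1}_p)_{\theta,r}=\dot B^s_{p,r}$.

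The remaining parts are then routine within this calculus. Part~(2) follows by applying the parabolic Bernstein inequality to each block, $2^{s_1j}\|\phi_j*f\|_{L^{p_1}}\le c\,2^{s_0j}\|\phi_j*f\|_{L^{p_0}}$ (using $s_0-\frac{n+2}{p_0}=s_1-\frac{n+2}{p_1}$), and summing in $\ell^{r_1}\supseteq\ell^{r_0}$; the Sobolev inclusion comes the same way via $\dot W^{s}_{p}=\dot F^{s}_{p,2}$ and Jawerth's $\dot F$--embedding. Part~(3) follows by factoring the lifting symbol as $(4\pi^2|\xi|^2+i\tau)^{s/2}=m_0(\xi,\tau)(|\xi|^s+|\tau|^{s/2})$ with $m_0$ parabolically $0$--homogeneous, bounding $m_0$ and the reciprocal symbols $|\xi|^s(4\pi^2|\xi|^2+i\tau)^{-s/2}$, $|\tau|^{s/2}(4\pi^2|\xi|^2+i\tau)^{-s/2}$ on $L^p(\RN)$, and then using Fubini to recognise $\|D_x^sf\|_{L^p(\RN)}$ and $\|D_t^{s/2}f\|_{L^p(\RN)}$ as the slice norms of $L^p({\mathbb R};\dot W^s_p(\R))$ and $L^p(\R;\dot W^{s/2}_p({\mathbb R}))$. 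Part~(4) follows from the parabolic first--difference characterization of $\dot B^{s,\frac s2}_{p,q}$, valid since $0<s<1$, namely $\|f\|_{\dot B^{s,\frac s2}_{p,q}}\approx(\int_{\RN}(\rho(z)^{-s}\|f(\cdot+z)-f\|_{L^p(\RN)})^q\rho(z)^{-(n+2)}dz)^{1/q}$ with $\rho(z',z_{n+1})=|z'|+|z_{n+1}|^{1/2}$ (obtained from $\{\phi_j\}$ and $\int_{\rho(z)\sim2^{-k}}\rho(z)^{-(n+2)}dz\sim1$), which after a Fubini rearrangement and Minkowski's inequality (according as $q\ge p$ or $q\le p$) is seen to equal the displayed double integral, exactly as for the classical Gagliardo norm. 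Part~(5) follows by writing $f=\sum_j\phi_j*f$, noting $\mathrm{supp}\,\widehat{\phi_j*f}\subset\{|\xi|\le C2^j\}$, so that a spatial Littlewood--Paley block $\psi_k*_x(f|_{t=0})$ only receives contributions from $j\ge k-C_0$, and combining the trace--Nikolskii bound $\|(\phi_j*f)(\cdot,0)\|_{L^p(\R)}\le c\,2^{2j/p}\|\phi_j*f\|_{L^p(\RN)}$ with $s-\frac2p>0$ and Young's inequality for $\ell^q$--convolution; the $\dot W$--statement with its sharp third index $p$ is recovered from this via $\dot F^s_{p,2}\hookrightarrow\dot B^s_{p,\max(2,p)}$ together with the complex interpolation of~(1), or directly from the classical parabolic trace theorem in \cite{Triebel2}.

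I expect the genuine work to lie in~(1) and~(3): in~(1), justifying that real interpolation collapses \emph{both} the $\ell^2$ (Triebel--Lizorkin) and the $\ell^q$ (Besov) block geometries to a single $\ell^r$ (Besov) geometry once $s_0\ne s_1$, and checking that $\mathcal R,\mathcal S$ are simultaneously bounded on the two scales of sequence spaces; and in~(3), verifying that the $0$--homogeneous symbols $m_0$ and the reciprocal ones — which are smooth only away from the coordinate subspaces $\{\xi=0\}$ and $\{\tau=0\}$ rather than away from the origin — still satisfy the hypotheses of the parabolic multiplier theorem, with the square--function comparison of the parabolic, spatial and temporal Littlewood--Paley decompositions as the safe fallback if they do not. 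Everything else is bookkeeping within the homogeneous parabolic Littlewood--Paley calculus.
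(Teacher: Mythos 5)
The paper does not actually prove Proposition \ref{prop2}: the remark immediately following it delegates everything to Theorem 3.7.1, 3.7.3, 3.6.1 and 4.5.2 of \cite{amman-anisotropic} and Theorems 6.4.5, 6.5.1 and 6.6.1 of \cite{BL}. Your sketch is therefore not an alternative to an argument in the paper but a reconstruction of the arguments in those references, and for parts (1), (2), (3) and (5) it is essentially the right reconstruction. The retraction $\mathcal{R}f=(\phi_j*f)_j$ onto $\ell^q_s(L^p)$ and $L^p(\ell^2_s)$, together with the interpolation identities for weighted sequence spaces and the collapse of the microscopic index under real interpolation when $s_0\neq s_1$ (a restriction you correctly note is tacit in the statement), is exactly the retract method of \cite{BL}; the blockwise parabolic Nikolskii inequalities with homogeneous dimension $n+2$ give (2) and the Besov trace bound in (5) as you describe; and for (3) the reciprocal symbols $|\xi|^s(4\pi^2|\xi|^2+i\tau)^{-s/2}$ and $|\tau|^{s/2}(4\pi^2|\xi|^2+i\tau)^{-s/2}$ do satisfy Marcinkiewicz-type product conditions $|\partial_\xi^\beta\partial_\tau^m \,\mu(\xi,\tau)|\leq c|\xi|^{-|\beta|}|\tau|^{-m}$, so the smoothness concern you raise is resolvable along the lines you indicate. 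For the $\dot W$-trace in (5) with $p<2$, note that the detour through $\dot F^{s}_{p,2}\hookrightarrow\dot B^s_{p,\max(2,p)}$ only yields the third index $\max(2,p)$, so there you really do need the sharp (Jawerth-type) trace theorem you mention as a fallback.

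The one step that does not close as written is (4). After the substitution $(y,s)=(x,t)+z$, the displayed quantity is a norm of the type $L^q_{(x,t)}\bigl(L^p_z\bigr)$ (inner integral in the increment, outer in the base point), whereas the first-difference characterization you start from is of the transposed type $L^q_z\bigl(L^p_{(x,t)}\bigr)$. Fubini does not interchange these iterated norms, and Minkowski's integral inequality yields only the one-sided comparison $\|\cdot\|_{L^q(L^p)}\leq\|\cdot\|_{L^p(L^q)}$ for $q\geq p$ (and the reverse for $q\leq p$); so your "Fubini rearrangement and Minkowski" argument proves only one of the two inequalities in each regime. The two-sided equivalence for $p\neq q$ is precisely the nontrivial content of the result the paper cites (Theorem 3.6.1 of \cite{amman-anisotropic}) and requires an additional ingredient — for instance, a pointwise bound of each Littlewood--Paley block $|\phi_k*f(x,t)|$ by a maximal average of $|f(\cdot)-f(x,t)|$ over the parabolic ball of radius $2^{-k}$ about $(x,t)$, which converts the shell decomposition directly into the $L^q_{(x,t)}(L^p_z)$ ordering. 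As written, this part of your proposal has a genuine gap; the rest is sound modulo the homogeneous-scale bookkeeping you already acknowledge.
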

\begin{rem}
    We refer to (2) of Remark \ref{rem0712} and Theorem 3.7.1 in \cite{amman-anisotropic} and Theorem 6.4.5 in \cite{BL} for the proof of (1), Theorem 6.5.1 in \cite{BL} for (2),  Theorem 3.7.3 in \cite{amman-anisotropic} for (3), Theorem 3.6.1 in \cite{amman-anisotropic} for (4)  and Theorem 4.5.2 in \cite{amman-anisotropic} and  Theorem 6.6.1 in \cite{BL} for (5).
\end{rem}

  $ \dot {B}^{s, \frac s2 }_{p,q} (\R_+ \times (0, \infty))$ refers to 
 the restriction of $ \dot {B}^{s, \frac s2 }_{p,q} ({\mathbb R}^n\times {\mathbb R})$ with norm
\begin{align*}
\|f\|_{\dot  B^{s, \frac{s}2 }_{p,q} (\R_+ \times (0, \infty))}=\inf\{ \|F\|_{ \dot B^{s, \frac{s}2 }_{p,q}(\R \times {\mathbb R})}: F\in  \dot B^{s, \frac{s}2}_{p,q}(\R \times {\mathbb R})\mbox{ with } F|_{\R_+ \times (0, \infty)}=f\}.
\end{align*}
Similarly, we define  $\dot W^{s,\frac s2 }_{p} (\R_+ \times (0, \infty))$.

Now, we explain interpolation spaces defined in $\R_+ \times (0, \infty)$. 

For $k \in {\mathbb N} \cup \{ 0\}$, we define $\dot {\mathbb W}_p^{2 k, k} (\R_+ \times (0,\infty))$ by
\begin{align*}
\dot {\mathbb W}^{2k, k}_p (\R_+ \times (0, \infty)) = \{ f   \, | \, \| f\|_{\dot {\mathbb W}^{2k, k}_p (\R_+ \times (0, \infty))} : =\sum_{2l + |\be|  =2 k} \| D_t^l D_x^{\be} f\|_{L^p (\R_+ \times (0, \infty))} < \infty \}.
\end{align*}
We also define the interpolation spaces
$ \dot  {\mathbb W}^{s, \frac{s}2}_p(\R_+ \times (0, \infty))$ and $\dot {\mathbb B}^{s, \frac{s}2}_{p,q} (\R_+ \times (0, \infty))$ by
\begin{align*}
\dot {\mathbb W}^{s, \frac{s}2}_{p} (\R_+ \times (0, \infty))& : =  [L^{p}   (\R_+ \times (0, \infty)),   \dot {\mathbb W}^{2k, k}_{p} (\R \times (0, \infty))]_{\te}, \\
\dot {\mathbb B}^{s, \frac{s}2}_{p,q} (\R_+ \times (0, \infty))& : =  (L^{p}   (\R_+ \times (0, \infty)),   \dot {\mathbb W}^{2k, k}_{p} (\R \times (0, \infty)))_{\te,  q},
\end{align*}
where $0 < \te<1$ and  $s = 2 k \te$.

For  $ f  \in \dot {\mathbb W}^{2k, k}_{p} (\R_+ \times (0, \infty)) $,  we define   extension $E f \in   \dot {\mathbb W}^{2k, k}_{p} ({\mathbb R}^{n+1})$  of $f$  by the following process. First, for the given function 
  $ f : {\mathbb R}^{n}_+  \times (0, \infty) \ri  {\mathbb R} $, we define extension of $E_1 f$ over $\R\times (0, \infty)$  as follows:
\begin{align*}
E_1 f (x,t) =\left\{\begin{array}{l}\vspace{2mm}
  f(x,t) \quad \mbox{for} \quad  x_n > 0,\\
  \sum_{j =1}^{2k+1} \la_j f(x',-jx_n,t)\quad \mbox{for} \quad  x_n < 0,
  \end{array}
  \right.
  \end{align*}
  where $\la_1, \, \la_2, \cdots, \la_{2k+1}$ satisfy $\sum_{j =1}^{2k+1} (-j)^l \la_j =1, \,\,  0 \leq l \leq 2k$.  Next,  for  $ f : {\mathbb R}^{n}  \times (0, \infty) \ri  {\mathbb R} $, we define extension of $E_2 f$ over ${\mathbb R}^{n+1} $  by
\begin{align*}
E_2 f (x,t) =\left\{\begin{array}{l}\vspace{2mm}
  f(x,t) \quad \mbox{for} \quad   t> 0,\\
  \sum_{j =1}^{k+1} \eta_j f(x, -j t)\quad \mbox{for} \quad  t < 0,
  \end{array}
  \right.
  \end{align*}
  where $\eta_1, \, \eta_2, \cdots, \eta_{k+1}$ satisfy $\sum_{j =1}^{k+1} (-j)^l \eta_j =1, \,\,  0 \leq l \leq k$.
  Let $ E = E_2 E_1$. Accordingly, for  $ f  \in \dot {\mathbb W}^{2k, k}_{p} (\R_+ \times (0, \infty)) $,   $E f \in \dot W^{2 k, k}_p ({\mathbb R}^{n+1})$ with $\| E f\|_{\dot W^{2 k, k}_p ({\mathbb R}^{n+1})} \leq c \| f \|_{\dot {\mathbb W}^{2k, k}_p (\R_+ \times (0, \infty))}$    (see Theorem 5.19 in \cite{AF}).    This implies that $ \dot {\mathbb W}^{2 k, k}_p (\R_+ \times (0, \infty)) =  \dot W^{2 k, k}_p (\R_+ \times (0, \infty))$ for $k \in {\mathbb N} \cup \{ 0\}$.

 Let $s > 0$ be a non-natural number.  Based on  the property of the complex interpolation,   $E: \dot {\mathbb W}^{s, \frac{s}2}_p (\R_+ \times (0, \infty)) \ri  \dot W^{s, \frac{s}2}_p ( {\mathbb R}^{n+1})$  is bounded operator. This implies  the followings:
\begin{align*}
\| f \|_{\dot  W^{s,\frac{s}2}_p (\R_+ \times (0, \infty))} \leq  c \|E f\|_{\dot  W^{s,\frac{s}2}_p (\RN)} \leq c \| f\|_{ \dot {\mathbb W}^{s,\frac{s}2 }_p (\R_+ \times (0, \infty))}.
\end{align*}
Accordingly, 
\begin{align}\label{subset1}
 \dot {\mathbb W}^{s, \frac{s}2}_p (\R_+ \times (0, \infty)) \subset \dot W^{s, \frac{s}2 }_p (\R_+ \times (0, \infty)).
 \end{align}

Let $2k < s < 2k+2$ for $k \in {\mathbb N} \cup \{0\}$. Let $ R : \dot W^{2i,i}_p ({\mathbb R}^{n+1}) \ri \dot W^{2 i,i}_p (\R_+ \times   (0, \infty))$, $ i = k,,\,k+1$ be a restriction operator defined by $ RF = F|_{\R_+ \times (0, \infty)}$. Clearly, $R$ is bounded operator in $\dot W^{2i,i}_p ({\mathbb R}^{n+1})$ to $\dot W^{2 i,i}_p (\R_+ \times   (0, \infty))$.  By the property of the complex interpolation, $R:\dot W^{s, \frac{s}2}_p ({\mathbb R}^{n+1}) \ri  \dot {\mathbb W}^{s, \frac{s}2}_p (\R_+ \times (0, \infty))$ is bounded operator with 
\begin{align}\label{0501-1}
\| R F \|_{\dot {\mathbb W}^{s, \frac{s}2}_p (\R_+ \times (0, \infty))} \leq c \| F\|_{\dot W^{s, \frac{s}2}_p ({\mathbb R}^{n+1}) }.
\end{align}

Let $ f \in \dot W^{s, \frac{s}2}_p (\R_+ \times (0, \infty)$. By the definition of $  \dot W^{s, \frac{s}2}_p (\R_+ \times (0, \infty)$, there is $F \in \dot W^{s, \frac{s}2}_p ({\mathbb R}^{n+1})$ such that $ RF =   f$ and $\| F\|_{\dot W^{s, \frac{s}2}_p ({\mathbb R}^{n+1})} \leq 2 \| f\|_{ \dot W^{s, \frac{s}2}_p (\R_+ \times (0, \infty)}$.  Then, from \eqref{0501-1}, we have 
\begin{align*}
\| f \|_{\dot {\mathbb W}^{s, \frac{s}2}_p (\R_+ \times (0, \infty))}  = \| R F \|_{\dot {\mathbb W}^{s, \frac{s}2}_p (\R_+ \times (0, \infty))}\leq c \| F\|_{\dot W^{s, \frac{s}2}_p ({\mathbb R}^{n+1}) } \leq 2c
\| f\|_{ \dot W^{s, \frac{s}2}_p (\R_+ \times (0, \infty)}. 
\end{align*}
 This implies the following:
\begin{align}\label{subset3}
 \dot W^{s, \frac{s}2 }_p (\R_+ \times (0, \infty)) \subset \dot {\mathbb W}^{s, \frac{s}2}_p (\R_+ \times (0, \infty)).
 \end{align}
From \eqref{subset1} and \eqref{subset3}, 
\begin{align}\label{230126-1}
\dot W^{s, \frac{s}2 }_p (\R_+ \times (0, \infty)) =  [L^{p}   (\R_+ \times (0, \infty)),   \dot W^{2k, k}_{p} (\R \times (0, \infty))]_{\te}\quad \forall s > 0.
\end{align}
Similarly, we obtain 
\begin{align}\label{230126-2}
  \dot B^{s, \frac{s}2 }_{p, r} (\R_+ \times (0, \infty)) = ( L^p   (\R_+ \times (0, \infty)),   \dot W^{2 k, k}_p (\R_+ \times (0, \infty)))_{\te, r}\quad 1 \leq p,r \leq \infty, \quad  \forall s > 0.
\end{align}
Hereafter, we use the notations $ L^p = L^p(\R_+ \times (0, \infty))$,  $ \dot W^{s, \frac{s}2 }_{p} = \dot W^{s, \frac{s}2 }_{p} (\R_+ \times (0, \infty)) $ and   $\dot B^{s,\frac{ s}2}_{p, r} =  \dot B^{s, \frac{s}2}_{p,r} (\R_+ \times (0, \infty))$.

From Proposition \ref{prop2}, we obtain the following results.
\begin{prop}\label{prop0215} 
\begin{itemize}
\item[(1)]
For  $1 \leq  p, \,\, q_0,\,\, q_1,\,\,  r\leq \infty$,
\begin{align*}
( \dot W^{s_0, \frac{s_0}2}_p,   \dot W^{s_1, \frac{s_1}2}_p )_{\te, r} = \dot B^{s, \frac{s}2}_{p,r}, \qquad
( \dot B^{s_0, \frac{s_0}2}_{p, q_0},   \dot B^{s_1, \frac{s_1}2}_{p,q_1} )_{\te, r} = \dot B^{s, \frac{s}2}_{p,r}, \qquad
[\dot W^{s_0, \frac{s_0}2}_p,   \dot W^{s_1, \frac{s_1}2}_p ]_{\te} = \dot W^{s, \frac{s}2}_{p},
\end{align*}
where $0 < \te<1$ and  $s = s_0  (1 -\te) + s_1 \te$.

\item[(2)]
Suppose that $ 1 \leq p_0\leq p_1  \leq \infty, \,  1 \leq r_0\leq r_1  \leq \infty$ and $ s_0\geq s_1$ with $s_0 - \frac{n+2}{p_0} = s_1 - \frac{n+2}{p_1}$.
Accordingly, the following inclusions hold
\begin{align*}
\dot  W^{s_0,  \frac{s_0}2  }_{p_0}  \subset   \dot W^{s_1, \frac{s_1}2  }_{p_1}, \quad
\dot  B^{ s_0, \frac{ s_0}2  }_{p_0, r_0} \subset   \dot B^{s_1, \frac{ s_1}2  }_{p_1 ,r_1} .
\end{align*}
In particular, 
\begin{align}
\dot  W^{1,  \frac12  }_{\frac{n+2}2}   \subset   \dot W^{0, 0}_{ n+2} = L^{n+2}.
\end{align}

\item[(3)]
For $0 < s$ and $ 1  < p  <  \infty$,
\begin{align*}
\dot W^{s,\frac{s}2}_{p}   &=L^p (0,\infty; \dot W^{ s}_{p} (\R_+ )) \cap L^p (\R_+; \dot W^{\frac{s}2}_{p}(0, \infty)).
 \end{align*}
 

 \item[(4)]
If   $f \in   \dot B_{p,q}^{s, \frac{s}2} $ or $f \in   \dot W_{p}^{s, \frac{s}2}, \, \,  1 < p < \infty, \,  \, 1 \leq q \leq \infty, \,\, s > \frac{2}p$, then  $f|_{t =0 } \in  \dot  B_{p,p}^{s -\frac{2}p }(\R )$ or $f|_{t =0 } \in  \dot  B_{p,q}^{s -\frac{2}p }(\R )$, respectively   with
\begin{align*}
\| f|_{t=0} \|_{ \dot B_{p,p}^{ s -\frac{2}p } (\R )} \leq c \| f \|_{\dot  W_{p}^{s, \frac{s}2}}, \quad
\| f|_{t=0} \|_{ \dot B_{p,q}^{ s -\frac{2}p } (\R )} \leq c \| f \|_{\dot  B_{p,q}^{s, \frac{s}2}}.
\end{align*}

\end{itemize}
\end{prop}

From the definition of anisotropic Besov space in ${\mathbb R}^{n+1}$, we obtain the following lemma;
\begin{lemm}\label{tracelemm}
Let  $1 < p < \infty$ and $1 \leq q \leq \infty $ and $ s > \frac1p$. Accordingly, 
\begin{align*}
\| f|_{x_n=0} \|_{ \dot B_{p,p}^{ s -\frac{1}p,\frac{s}2  -\frac1{2p} } (\Rn  \times (0, \infty))} \leq c \| f \|_{\dot  W_{p}^{s, \frac{s}2}}, \quad
\| f|_{x_n =0} \|_{ \dot B_{p,q}^{ s -\frac{1}p, \frac{s}2 -\frac1{2p} } (\Rn \times (0, \infty))} \leq c \| f \|_{\dot  B_{p,q}^{s, \frac{s}2}}.
\end{align*}
\end{lemm}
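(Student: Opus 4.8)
\textbf{Proof proposal for Lemma \ref{tracelemm}.}

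The plan is to reduce the half-space trace statement to the corresponding trace property on the whole space $\RN$, which is already available through the machinery assembled above, and then to combine it with the extension operator $E$. First I would recall that by definition $\|f\|_{\dot W^{s,\frac s2}_p} = \|f\|_{\dot W^{s,\frac s2}_p(\R_+\times(0,\infty))}$ is the restriction norm, and that \eqref{230126-1} together with the boundedness of $E:\dot W^{s,\frac s2}_p(\R_+\times(0,\infty))\to\dot W^{s,\frac s2}_p(\RN)$ gives $\|Ef\|_{\dot W^{s,\frac s2}_p(\RN)}\approx\|f\|_{\dot W^{s,\frac s2}_p}$, and similarly for the Besov scale via \eqref{230126-2}. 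So it suffices to prove the trace inequality on $\RN$: for $F\in\dot W^{s,\frac s2}_p(\RN)$ with $s>\frac1p$, the restriction $F|_{x_n=0}$ lies in $\dot B^{s-\frac1p,\frac s2-\frac1{2p}}_{p,p}(\Rn\times(0,\infty))$ — or rather in the space over $\Rn\times\mathbb R$, then restrict in $t$ — with the norm bound, and likewise $F|_{x_n=0}\in\dot B^{s-\frac1p,\frac s2-\frac1{2p}}_{p,q}(\Rn\times\mathbb R)$ when $F\in\dot B^{s,\frac s2}_{p,q}(\RN)$. Then apply this to $F=Ef$ and restrict to $t>0$, using that the trace commutes with restriction in $t$.

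The whole-space trace statement itself I would prove by viewing $\RN={\mathbb R}^n_{x',t}\times{\mathbb R}_{x_n}$ and invoking the standard Besov trace theorem in the $x_n$-variable: taking the trace at $x_n=0$ of a function in $\dot W^{s,\frac s2}_p(\RN)$ costs $\frac1p$ derivatives and lands in a Besov space with third index $p$; taking the trace of a function in the anisotropic Besov space $\dot B^{s,\frac s2}_{p,q}(\RN)$ costs $\frac1p$ derivatives and preserves the third index $q$. Concretely, using Proposition \ref{prop2}(3), $\dot W^{s,\frac s2}_p(\RN)=L^p_{x_n}(\dot W^s_p({\mathbb R}^n_{x',t}))\cap\cdots$ is not quite the cleanest route; instead I would use the Littlewood–Paley characterization of the anisotropic spaces together with the observation that the anisotropic homogeneity only scales $x_n$ like $x'$ (weight $1$) and $t$ like length squared (weight $2$), so the $x_n$-trace behaves exactly as in the isotropic $(x',x_n)$ theory with $t$ as a parameter carried along in the parabolic scaling. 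This gives the loss of $\frac1p$ in the spatial smoothness index $s\mapsto s-\frac1p$ and the accompanying $\frac s2\mapsto\frac s2-\frac1{2p}$ in the parabolic time-index, with the third index becoming $p$ in the Sobolev case (real interpolation of $L^p$-based Sobolev spaces) and staying $q$ in the Besov case, precisely as in Proposition \ref{prop2}(5) but with the $x_n$-variable playing the role of $t$ there.

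The one genuine point requiring care — and what I expect to be the main obstacle — is the bookkeeping of the two different time-like restrictions: we restrict in $x_n$ to get the boundary value, and separately restrict in $t$ from ${\mathbb R}$ to $(0,\infty)$, and we must check these operations are compatible so that the half-space-in-space, half-line-in-time trace norm of $f$ is controlled by the full-space norm of $Ef$. Since $E=E_2E_1$ with $E_1$ the spatial (odd-type) reflection in $x_n$ and $E_2$ the reflection in $t$, and since the trace at $x_n=0$ only interacts with $E_1$, one has $\big(E_1f\big)|_{x_n=0}=f|_{x_n=0}$ on $\Rn\times(0,\infty)$, so the boundary trace of $Ef$ restricted to $t>0$ equals $f|_{x_n=0}$; then the restriction-norm inequality for the boundary Besov space over $\Rn\times(0,\infty)$ follows by taking the infimum. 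The remaining technical item is to make sure the whole-space trace theorem I invoke is the homogeneous (rather than inhomogeneous) one, which is standard but should be cited — the homogeneous Besov trace theorem for $\dot B^{s,\frac s2}_{p,q}(\RN)$ and the corresponding statement for $\dot W^{s,\frac s2}_p(\RN)$ via $(L^p,\dot W^{2k,k}_p)_{\theta,p}$-interpolation and Proposition \ref{prop2}(1), exactly paralleling the proof of Proposition \ref{prop2}(5) with $x_n$ in place of $t$. Everything else is routine once the trace loss $\frac1p$ and the index behavior are pinned down.
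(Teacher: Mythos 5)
Your proposal is sound; the paper itself gives no argument here — it simply cites Theorem~4.5.2 of Amann's anisotropic function spaces book (the source list in the paper labels this \cite{amman-anisotropic}, and the cite key \cite{Am} attached to this lemma appears to be a typo for that reference), which already contains the half-space anisotropic trace theorem you set out to rederive. So there is no ``paper's approach'' to compare against beyond the citation, and your route — extend by $E=E_2E_1$ to all of ${\mathbb R}^{n+1}$, apply the whole-space Littlewood--Paley trace argument in the $x_n$ variable (trading $\tfrac1p$ in $s$ for the anisotropic boundary space with third index $p$ in the Sobolev case, $q$ in the Besov case), and then use that the trace at $x_n=0$ commutes with $E_2$ and agrees with $f|_{x_n=0}$ on $t>0$ — is exactly the standard proof of what the paper defers to Amann.

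Two small inaccuracies worth flagging, though neither derails the argument. First, $E_1$ is not an odd reflection: it is the Hestenes-type extension $\sum_j\lambda_j f(x',-jx_n,t)$ with $\sum_j(-j)^\ell\lambda_j=1$, and the identity $(E_1 f)|_{x_n=0}=f|_{x_n=0}$ holds because the coefficients enforce continuity across $x_n=0$ (the $\ell=0$ constraint gives $\sum_j\lambda_j=1$), not because of oddness. Second, when you pass from the integer endpoints $\dot W^{2k,k}_p$ to $\dot W^{s,\frac s2}_p$ you should use \emph{complex} interpolation (that is how $\dot W^{s,\frac s2}_p$ is built, cf.\ Proposition~\ref{prop2}(1)) and then $[\dot B^{s_0-\frac1p,\cdot}_{p,p},\dot B^{s_1-\frac1p,\cdot}_{p,p}]_\theta=\dot B^{s-\frac1p,\cdot}_{p,p}$; the \emph{real} interpolation $(\dot W^{s_0,\cdot}_p,\dot W^{s_1,\cdot}_p)_{\theta,q}$ is the one that produces the Besov-scale statement with third index $q$. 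With those bookkeeping adjustments your outline matches the argument underlying the cited theorem.
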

See Theorem 4.5.2 in \cite{amman-anisotropic}.


We say that a distribution $f$ in $\Rn \times  {\mathbb R}$ is contained in function space $\dot W^{\frac{k}2}_p(0, \infty;\dot{B}^{-\frac{1}{p}}_{p,p}(\Rn)), \,\, k \in {\mathbb N} \, \cup \, \{ 0 \}$
if $f$ satisfies
\begin{align*}
\| f\|_{\dot W^{\frac{k}2}_{p}(0, \infty;\dot{B}^{-\frac1p}_{p,p}(\Rn))} :& =  \| D^{\frac{k}2}_t f \|_{L^p (0, \infty; \dot B^{-\frac1p}_{p,p}   (\Rn))} <\infty,
\end{align*}
where $\dot{B}^{-\frac1p}_{p,p}(\Rn)$ is dual space of $\dot{B}^{\frac1p}_{p',p'}(\Rn)$,  $ \frac1p + \frac1{p'} =1$.

Let  $ k< s < k +1$   with $ s =\te k   + ( 1 -\te) (k+1) $ for $ 0 < \te < 1$ and $1 \leq q \leq  \infty$. We define  function space 
$\dot A^{s, \frac{s }2}_{p,q} ( (\Rn \times (0, \infty))$ by 
\begin{align}\label{interpolationspace2}
\begin{split}
  \dot A^{s, \frac{s }2}_{p,q}  (\Rn \times (0, \infty)) : &=
\Big( L^p(0, \infty;\dot {B}^{k-\frac{1}{p}}_{p,p}(\Rn))\, \cap \, \dot W^{\frac{k}2}_{p}(0, \infty;\dot{B}^{-\frac1p}_{p,p}(\Rn)), \\
 &\hspace{2cm}  L^p(0, \infty;\dot {B}^{k+1-\frac{1}{p}}_{p,p}(\Rn))\, \cap \, \dot W^{\frac{k+1}2}_{p}(0, \infty;\dot{B}^{-\frac1p}_{p,p}(\Rn)) \Big)_{\te, q}.
   \end{split}
\end{align}

The following H$\ddot{\rm o}$lder type inequality  is a well-known result  for usual homogeneous Besov space in $\R$ (see Lemma 2.2 in \cite{chae}).
\begin{lemm}\label{0510prop}
Let $0 < s  $, $1 \leq p \leq r_i, p_i \leq \infty$,  $ 1 \leq q \leq \infty$ with  $\frac1{r_i} + \frac1{p_i} = \frac1p$, $i=1,2$. Accordingly,
\begin{align} \label{1104-1}
\| f_1f_2\|_{ \dot B^{s, \frac{s}2}_{p,q}   }  \leq
        c \big( \| f_1\|_{   \dot B^{s, \frac{s}2}_{p_1,q }   } \| f_2\|_{  L^{r_1}  }   + \| f_1\|_{ L^{p_2 } } \|f_2\|_{   \dot B^{s, \frac{s}2}_{r_2 ,q} }  \big).
\end{align}
\end{lemm}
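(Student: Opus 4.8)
\textbf{Proof plan for Lemma \ref{0510prop}.}
The plan is to reduce the anisotropic statement to the Littlewood--Paley characterization of $\dot B^{s,\frac s2}_{p,q}$ and then run the standard Bony paraproduct argument adapted to the parabolic scaling. First I would fix the dyadic resolution $\{\phi_j\}$ built above, write $f_i = \sum_j \De_j f_i$ with $\De_j f_i = f_i * \phi_j$, and decompose the product $f_1 f_2$ into the three paraproduct pieces $\Pi_1(f_1,f_2) = \sum_j S_{j-2}f_1\,\De_j f_2$, $\Pi_2(f_1,f_2) = \sum_j \De_j f_1\, S_{j-2}f_2$, and the diagonal term $\Pi_3(f_1,f_2) = \sum_{|j-k|\le 1}\De_j f_1\,\De_k f_2$, where $S_m = \sum_{l\le m}\De_l$. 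The key geometric facts are the same as in the isotropic case once one measures frequencies in the parabolic "norm" $|\xi| + |\tau|^{1/2}$: $S_{j-2}f_1\,\De_j f_2$ has frequency support in a parabolic annulus of size $\sim 2^j$, so applying $\De_{j'}$ to it leaves only finitely many $j\sim j'$; and $\De_j f_1\,\De_k f_2$ with $|j-k|\le 1$ has frequency support in a parabolic ball of radius $\sim 2^j$, so $\De_{j'}$ of it vanishes unless $j\gtrsim j'$, which is exactly where $s>0$ is needed to sum the resulting series.

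Next I would estimate each piece. For $\Pi_1$: by the support property, $\|\De_{j'}\Pi_1(f_1,f_2)\|_{L^p} \lesssim \sum_{j\sim j'}\|S_{j-2}f_1\|_{L^{p_2}}\|\De_j f_2\|_{L^{r_2}} \lesssim \|f_1\|_{L^{p_2}}\sum_{j\sim j'}\|\De_j f_2\|_{L^{r_2}}$, using H\"older with $\frac1{p_2}+\frac1{r_2}=\frac1p$ and the uniform bound $\|S_m f_1\|_{L^{p_2}}\lesssim \|f_1\|_{L^{p_2}}$ (Bernstein/maximal-function control of the partial sums). Multiplying by $2^{sj'}$, taking $\ell^q$ in $j'$ and using the finite overlap, this is bounded by $c\,\|f_1\|_{L^{p_2}}\|f_2\|_{\dot B^{s,\frac s2}_{r_2,q}}$. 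The term $\Pi_2$ is handled symmetrically and yields $c\,\|f_1\|_{\dot B^{s,\frac s2}_{p_1,q}}\|f_2\|_{L^{r_1}}$ (again H\"older with $\frac1{p_1}+\frac1{r_1}=\frac1p$). For the diagonal term $\Pi_3$ one has $\|\De_{j'}\Pi_3(f_1,f_2)\|_{L^p}\lesssim \sum_{j\ge j'-c}\|\De_j f_1\|_{L^{p_1}}\|\De_j f_2\|_{L^{r_1}}$; writing $2^{sj'} = 2^{s(j'-j)}2^{sj}$ with $s>0$ so that $\sum_{j\ge j'-c}2^{s(j'-j)}<\infty$, Young's inequality for the $\ell^q\ast\ell^1$ convolution in the $j$-variable gives the bound $c\,\|f_1\|_{\dot B^{s,\frac s2}_{p_1,q}}\|f_2\|_{L^{r_1}}$ (and, distributing the low-regularity factor the other way, also $c\,\|f_1\|_{L^{p_2}}\|f_2\|_{\dot B^{s,\frac s2}_{r_2,q}}$; either suffices). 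Summing the three contributions and using $\|f_i\|_{\dot B^{0}_{r,\infty}}\lesssim\|f_i\|_{L^r}$ to absorb the low-frequency pieces gives \eqref{1104-1}.

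The only genuine subtlety — and the step I expect to be the main obstacle to write cleanly — is the frequency-localization geometry in the anisotropic setting: one must verify that multiplication interacts with the parabolic dilation $(\xi,\tau)\mapsto(2^{-j}\xi,2^{-2j}\tau)$ the way it does with ordinary dilation, i.e. that $\widehat{f}$ supported in $\{|\xi|+|\tau|^{1/2}\le A2^j\}$ convolved with one supported in $\{|\xi|+|\tau|^{1/2}\sim 2^j\}$ lands in $\{|\xi|+|\tau|^{1/2}\sim 2^j\}$, and similarly for the two "ball" cases. This is true because the quantity $|\xi|+|\tau|^{1/2}$ is subadditive under the additive structure on frequency space inherited from convolution (it is a quasi-norm satisfying a triangle-type inequality with a fixed constant), so the classical almost-orthogonality bookkeeping goes through verbatim with harmless constant adjustments. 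Everything else — Bernstein's inequality on parabolic annuli, boundedness of the partial-sum operators $S_m$ on $L^r$, and the $\ell^q$ convolution estimates — is identical to the isotropic proof in \cite{chae} and in \cite{BL}, so I would invoke those and only spell out the parabolic-geometry verification.
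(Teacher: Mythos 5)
Your paraproduct plan is a correct and careful way to prove the product estimate on the \emph{full} space $\dot B^{s,\frac{s}{2}}_{p,q}(\mathbb{R}^{n+1})$, and the geometric point you isolate --- that $|\xi|+|\tau|^{1/2}$ is subadditive under the additive frequency structure coming from convolution, so the three Bony pieces obey the same almost-orthogonality bookkeeping as in the isotropic case --- is exactly the right thing to check. That whole-space estimate, however, is the part the paper does \emph{not} write out: it simply invokes ``the proof of Lemma 2.2 in \cite{chae}'' for $\dot B^{s,\frac{s}{2}}_{p,q}(\mathbb{R}^{n+1})$. So you are filling in a step the paper takes for granted, and your filling-in is sound.

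What you omit is the part of the argument the paper actually does spell out, and it is not cosmetic. Under the standing convention set just before Proposition \ref{prop0215}, the norm $\dot B^{s,\frac{s}{2}}_{p,q}$ in the lemma means the \emph{restriction} space $\dot B^{s,\frac{s}{2}}_{p,q}(\mathbb{R}^n_+\times(0,\infty))$, defined by an infimum over extensions. You cannot run a Littlewood--Paley decomposition $f_i=\sum_j f_i*\phi_j$ directly on such a restriction space; your first sentence silently places you on $\mathbb{R}^{n+1}$. To get the lemma as stated, you must first apply the reflection extension operator $E=E_2E_1$ (constructed earlier in Section \ref{notation}), which is bounded from $\dot B^{s,\frac{s}{2}}_{p,q}(\mathbb{R}^n_+\times(0,\infty))$ to $\dot B^{s,\frac{s}{2}}_{p,q}(\mathbb{R}^{n+1})$ and from $L^r(\mathbb{R}^n_+\times(0,\infty))$ to $L^r(\mathbb{R}^{n+1})$, note that $E f_1\cdot E f_2$ restricts back to $f_1 f_2$ on $\mathbb{R}^n_+\times(0,\infty)$ so that $\|f_1 f_2\|_{\dot B^{s,\frac{s}{2}}_{p,q}}\le\|Ef_1\,Ef_2\|_{\dot B^{s,\frac{s}{2}}_{p,q}(\mathbb{R}^{n+1})}$, apply your full-space paraproduct bound to $Ef_1$ and $Ef_2$, and finally replace each $\|Ef_i\|$ by $c\|f_i\|$. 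Without this bridge your plan proves a different (whole-space) statement. Adding that short reduction would make the proposal a complete proof, and arguably a more self-contained one than the paper's, since you supply the paraproduct details the paper only cites.
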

\begin{proof}
Let $E$ be an extension operator defined by the  above statement. From the proof of   Lemma 2.2 in \cite{chae}, Lemma \ref{0510prop} holds for $E f_1$ and $E_2f$ with homogeneous anisotropic Besov space $\dot B^{s, \frac{s}2}_{p,q} ( {\mathbb R}^{n+1}   )$.  Accordingly, we have 
\begin{align*}
\notag \| f_1f_2\|_{\dot B^{s, \frac{s}2}_{p,q}  } & \leq  \| E( f_1) E( f_2 )\|_{  \dot B^{s, \frac{s}2}_{p,q} ( {\mathbb R}^{n+1}   ) }\\
\notag &\leq
        c \big( \| E f_1\|_{   \dot B^{s, \frac{s}2}_{p_1 ,q} ({\mathbb R}^{n+1}  ) } \| E f_2\|_{  L^{r_1} ({\mathbb R}^{n+1}  )}   + \|E f_1\|_{ L^{p_2 }({\mathbb R}^{n+1}  )} \| E f_2\|_{   \dot B^{s, \frac{s}2}_{r_2, q}({\mathbb R}^{n+1}   )}  \big)\\
 & \leq
        c \big( \|   f_1\|_{   \dot B^{s, \frac{s}2}_{p_1,q }   } \|   f_2\|_{  L^{r_1}  }   + \|   f_1\|_{ L^{p_2 } } \|   f_2\|_{   \dot B^{s, \frac{s}2}_{r_2,q } }  \big).
\end{align*}
Hence, \eqref{1104-1} holds.
\end{proof}

\begin{rem}
The properties (1) and (2) of Proposition \ref{prop0215}  and Lemma \ref{0510prop} hold for  Besov space $\dot B^s_{p,q} (\R_+)$ and Sobolev space $\dot W^s_p (\R_+)$.
\end{rem}

The next lemma pertains to the relation between $\dot B^{s, \frac{s}2}_{p,q} $ and $C_b ([0, \infty); \dot  B^{s -\frac{2}p}_{p,q} (\R_+))$.
\begin{lemm}\label{lemma1208}
Let $ 1 < p < \infty$, $ 1 \leq q \leq \infty$ and  $ s > \frac2p$. Accordingly,
\begin{align*}
\dot W^{ s,\frac{s}2}_{p}  \subset  C_b ([0, \infty); \dot  B^{s -\frac{2}p}_{p,p} (\R_+)), \quad
\dot B^{s, \frac{s}2}_{p,q}  \subset  C_b ([0, \infty); \dot  B^{s -\frac{2}p}_{p,q} (\R_+)).
\end{align*}
\end{lemm}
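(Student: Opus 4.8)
The plan is to prove the embedding by first reducing to the whole-space statement and then establishing the time-continuity there by a Littlewood--Paley argument. Recall from \eqref{230126-1}--\eqref{230126-2} that $\dot W^{s,\frac{s}2}_p$ and $\dot B^{s,\frac{s}2}_{p,q}$ coincide with the restrictions of the corresponding spaces on ${\mathbb R}^{n+1}$, and that the extension operator $E$ satisfies $\|Ef\|_{\dot W^{s,\frac{s}2}_p({\mathbb R}^{n+1})}\approx \|f\|_{\dot W^{s,\frac{s}2}_p}$ (and likewise for Besov). Hence it suffices to show
\begin{align*}
\dot W^{s,\frac{s}2}_p({\mathbb R}^{n+1}) \subset C_b({\mathbb R}; \dot B^{s-\frac2p}_{p,p}(\R)), \qquad
\dot B^{s,\frac{s}2}_{p,q}({\mathbb R}^{n+1}) \subset C_b({\mathbb R}; \dot B^{s-\frac2p}_{p,q}(\R)),
\end{align*}
and then restrict the time variable to $(0,\infty)$ and the space variable to $\R_+$: the restriction of $Ef$ recovers $f$, its time-trace at any $t_0\ge 0$ restricts to the trace of $f$ in $\dot B^{s-\frac2p}_{p,q}(\R_+)$, and the $\dot B^{s-\frac2p}_{p,q}(\R_+)$-norm is dominated by the $\dot B^{s-\frac2p}_{p,q}(\R)$-norm of the extension.

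For the whole-space statement, I would work with the anisotropic dyadic decomposition $f=\sum_j f*\phi_j$. Boundedness into $L^\infty_t$ is essentially Proposition \ref{prop2}(5): for each fixed $t$, the map $f\mapsto f(\cdot,t)$ is bounded from $\dot B^{s,\frac{s}2}_{p,q}({\mathbb R}^{n+1})$ to $\dot B^{s-\frac2p}_{p,q}(\R)$, with norm independent of $t$ by translation invariance in time, so $\sup_t \|f(\cdot,t)\|_{\dot B^{s-\frac2p}_{p,q}(\R)}\le c\|f\|_{\dot B^{s,\frac{s}2}_{p,q}({\mathbb R}^{n+1})}$. The point requiring real work is \emph{continuity} in $t$. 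For each dyadic block $f*\phi_j$ the Fourier support is compact in $(\xi,\tau)$, so $t\mapsto (f*\phi_j)(\cdot,t)$ is smooth, in particular continuous, into $\dot B^{s-\frac2p}_{p,p}(\R)$; the tail estimate
\begin{align*}
\Big\|\sum_{|j|>N}(f*\phi_j)(\cdot,t)\Big\|_{\dot B^{s-\frac2p}_{p,p}(\R)}^p \le c\sum_{|j|>N} 2^{(s-\frac2p)pj}\|(f*\phi_j)(\cdot,t)\|_{L^p(\R)}^p
\end{align*}
is, after taking $\sup_t$, bounded by $c\sum_{|j|>N}2^{spj}\|f*\phi_j\|_{L^p({\mathbb R}^{n+1})}^p$ (using the trace bound blockwise), which tends to $0$ as $N\to\infty$ since $f\in\dot B^{s,\frac{s}2}_{p,p}$. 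Thus $f(\cdot,t)$ is a uniform limit of continuous $\dot B^{s-\frac2p}_{p,p}(\R)$-valued functions, hence continuous. For the $\dot B^{s,\frac{s}2}_{p,q}$ case one runs the same argument with the $\ell^q$ sum in place of $\ell^p$, using Proposition \ref{prop2}(5) (the Besov trace with matching third index $q$) for the uniform bound and again cutting the high and low frequencies; since the tail of an $\ell^q$-summable sequence goes to zero, the same uniform-limit-of-continuous-functions conclusion applies.

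The main obstacle I anticipate is handling the \emph{low-frequency} tail $j\to-\infty$ in the homogeneous setting: unlike the inhomogeneous case, there is no a priori control making $\sum_{j<-N}$ small uniformly in $t$ for an arbitrary tempered distribution modulo polynomials, and one must be slightly careful that $f(\cdot,t)$ is well-defined as an element of the homogeneous Besov space (i.e. modulo polynomials, or within the subspace where the series converges). The clean way around this is to note that the trace inequality in Proposition \ref{prop2}(5) already encodes that $\sum_j 2^{spj}\|f*\phi_j\|_{L^p({\mathbb R}^{n+1})}^p<\infty$ controls $\sum_j 2^{(s-\frac2p)pj}\sup_t\|(f*\phi_j)(\cdot,t)\|_{L^p(\R)}^p$, so both the high- and low-frequency tails of the series $\sum_j (f*\phi_j)(\cdot,t)$ are small uniformly in $t$; this gives norm-convergence in $\dot B^{s-\frac2p}_{p,q}(\R)$, uniformly in $t$, of the partial sums, each of which is continuous in $t$, and continuity of the limit follows. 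Finally, boundedness of the inclusion (the $C_b$, not just $C$, assertion) is immediate from the uniform bound established in the first step.
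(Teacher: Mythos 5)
Your proof is correct in outline but takes a genuinely different route from the paper's. The paper proves continuity by a heat-kernel comparison: first it shows that if $f$ has zero time-trace then $\|f(\cdot,t)\|_{\dot B^{s-2/p}_{p,q}(\R_+)}\to 0$ as $t\to 0$ (by applying the trace estimate to the zero extension over $\R_+\times(0,t)$ and letting $t\to 0$), then writes a general $f$ as $(f-F)+F$ where $F(\cdot,t)=\Gamma_t*f_0$ is the caloric extension of the trace $f_0=f(\cdot,0)$, for which continuity at $t=0$ is classical; the range $s\geq 2$ is handled by differentiating $2k$ times in $x$ to reduce to $s-2k\in(\frac2p,2)$. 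You instead perform an anisotropic Littlewood--Paley decomposition in $(x,t)$, combine a uniform-in-$t$ trace bound with continuity of each band-limited block, and let the dyadic tails tend to zero in norm. Your method avoids the heat kernel, treats all $s>\frac2p$ at once without the reduction step, and makes the uniformity over $t\in[0,\infty)$ (rather than just continuity at $t=0$) visible from the start; the paper's method is more elementary in that it only needs the trace inequality and standard caloric smoothing, with no dyadic machinery.

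Two details you should nail down to make the argument airtight. First, your inequality
\begin{align*}
\Big\|\sum_{|j|>N}(f*\phi_j)(\cdot,t)\Big\|_{\dot B^{s-\frac2p}_{p,p}(\R)}^p \leq c\sum_{|j|>N} 2^{(s-\frac2p)pj}\|(f*\phi_j)(\cdot,t)\|_{L^p(\R)}^p
\end{align*}
is not a straight triangle inequality, because $(f*\phi_j)(\cdot,t)$ is not a single \emph{spatial} Littlewood--Paley block: the anisotropic annulus $2^{j-1}<|\xi|+|\tau|^{1/2}<2^{j+1}$ projects onto the spatial ball $\{|\xi|\lesssim 2^{j+1}\}$, so each block spills into all spatial dyadic scales $k\leq j+2$. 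The inequality is still true, but it requires the ``low-pass summation'' lemma, which is where the hypothesis $s-\frac2p>0$ is used in an essential way; you should invoke it explicitly rather than treat the step as definitional. Second, your block-wise bound $\sup_t 2^{(s-\frac2p)j}\|(f*\phi_j)(\cdot,t)\|_{L^p(\R)}\leq c\,2^{sj}\|f*\phi_j\|_{L^p({\mathbb R}^{n+1})}$ is really a Bernstein estimate in the time variable (the $\tau$-support of $f*\phi_j$ lies in $|\tau|\lesssim 2^{2j}$, whence $\sup_t\|\cdot\|_{L^p_x}\lesssim 2^{2j/p}\|\cdot\|_{L^p_{x,t}}$); stating it as an application of ``the trace bound blockwise'' obscures what is actually being used.
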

We proved Lemma \ref{lemma1208} in Appendix \ref{prooflemma1208}.

\begin{lemm}\label{lemma0716}
Let $1 \leq p,\, q < \infty$ and $ 0 < s< 1$.
\begin{itemize}
\item[(1)]
For  ${\mathbb G} \in \dot B^{s,\frac{s}2}_{p,q}   \, \cap \,  L^\infty $,
\begin{align*}
\| \si ({\mathbb G}){\mathbb G}\|_{\dot B^{s,\frac{s}2}_{p,q}  } \leq c \| D \si\|_{L^\infty(0, \| {\mathbb G}\|_{L^\infty(\R_+)})}  \| {\mathbb G}\|_{L^\infty(\R_+)} \| {\mathbb G}\|_{\dot B^{s, \frac{s}2}_{p,q} },
\end{align*}
where $\si $ is defined in \eqref{0501-2}.

\item[(2)]
\begin{align*}
 \| \si ({\mathbb G}) -\si({\mathbb H})\|_{\dot B^{s,\frac{s}2}_{p,q} }   \leq c \Big( \|  {\mathbb G} -  {\mathbb H}\|_{\dot B^{s, \frac{s}2}_{p,q}  } + \big(  \| {\mathbb G}\|_{\dot B^{s, \frac{s}2}_{p,q}  } +  \| {\mathbb H}\|_{\dot B^{s, \frac{s}2}_{p,q} }    \big) \big(  \|  {\mathbb G}- {\mathbb H}\|_{L^\infty  }  \big) \Big),
\end{align*}
where  $   c \leq  C \Big(  \|D\si\|_{L^\infty (0, \max(\| {\mathbb G}\|_{L^\infty },  \| {\mathbb H}\|_{L^\infty } )}+ \|D^2\si\|_{L^\infty (0, \max(\| {\mathbb G}\|_{L^\infty },  \| {\mathbb H}\|_{L^\infty } )}  \Big) $.

\end{itemize}
\end{lemm}
We proved Lemma \ref{lemma0716} in Appendix \ref{proofoflemma0716}.

The following is the Gagliado-Nirenberg type inequality in anisotropic spaces.
\begin{lemm}\label{lemma0601}
Let   $ p > n+2$ and $\eta= \frac{n+2}{n+2 +p}$ and $\te = \frac{n+2}p$. Accordingly,
  
\begin{align}
\label{230519-4}\| u \|_{L^p (\R_+ \times (0, \infty))} & \leq c \| u \|_{L^{n+2}(\R_+ \times (0, \infty))}^{\te} \| u\|^{1- \te}_{\dot B_{p,1}^{\frac{n+2}p, \frac{n+2}{2p}} (\R_+ \times (0, \infty))},
\\
\label{230519-5}\| u \|_{\dot B^{\frac{n+2}p, \frac{n+2}{2p}}_{p,1} (\R_+ \times (0, \infty))} &  \leq c \| u \|^{\frac{\te (1 -\eta)}{\te + \eta -\te \eta}}_{L^{n+2} (\R_+ \times (0, \infty))}   
\| u \|^{\frac{\eta}{\te + \eta -\te \eta}}_{\dot B^{1 + \frac{n+2}p, \frac12 + \frac{n+2}{2p}}_{p,1} (\R_+ \times (0, \infty))}. 
\end{align}
\end{lemm}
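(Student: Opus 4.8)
The plan is to prove both inequalities from standard ingredients: the embedding/interpolation properties of the anisotropic Besov scale in Proposition \ref{prop0215} and a Gagliardo–Nirenberg type interpolation between $L^{n+2}$, $\dot B^{\frac{n+2}p,\frac{n+2}{2p}}_{p,1}$ and $\dot B^{1+\frac{n+2}p,\frac12+\frac{n+2}{2p}}_{p,1}$. First I would record the key scaling relation: the space $\dot B^{\frac{n+2}p,\frac{n+2}{2p}}_{p,1}(\R_+\times(0,\infty))$ has differentiability parameter $\frac{n+2}p$ and integrability $p$, so it sits at the same scaling level as $L^{n+2}$, since $\frac{n+2}p-\frac{n+2}p=0=0-\frac{n+2}{n+2}$. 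Actually by Proposition \ref{prop0215}(2) with $p_0=p,\ s_0=\frac{n+2}p$ and $p_1=n+2,\ s_1=0$ one has the embedding $\dot B^{\frac{n+2}p,\frac{n+2}{2p}}_{p,1}\subset \dot B^{0}_{n+2,\,n+2}$, and since $\dot B^0_{n+2,n+2}\hookrightarrow$ is comparable to $L^{n+2}$-type control, this already identifies $\dot B^{\frac{n+2}p,\frac{n+2}{2p}}_{p,1}$ as "between" $L^{n+2}$ and $L^p$ in the scaling sense.

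For \eqref{230519-4}, I would interpolate $L^p$ between $L^{n+2}$ and $\dot B^{\frac{n+2}p,\frac{n+2}{2p}}_{p,1}$. Writing $\te=\frac{n+2}p\in(0,1)$, the claim is $\|u\|_{L^p}\le c\|u\|_{L^{n+2}}^{\te}\|u\|_{\dot B^{\frac{n+2}p,\frac{n+2}{2p}}_{p,1}}^{1-\te}$. One way: use the Littlewood–Paley decomposition of $u$ on $\R_+\times(0,\infty)$ via the extension operator $E$ and the characterization $\|Eu\|_{\dot B^{s,\frac s2}_{p,r}(\RN)}\approx\|u\|_{\dot B^{s,\frac s2}_{p,r}}$. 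For each dyadic piece $u_j = u*\phi_j$, Bernstein's inequality in the parabolic scaling gives $\|u_j\|_{L^p}\lesssim 2^{j(n+2)(\frac1{n+2}-\frac1p)}\|u_j\|_{L^{n+2}} = 2^{j\frac{n+2}p\cdot\frac{p-(n+2)}{p}\cdot\frac{n+2}{n+2}}\ldots$ — more cleanly, $\|u_j\|_{L^p}\lesssim \min\big(2^{j(n+2)(\frac1{n+2}-\frac1p)}\|u_j\|_{L^{n+2}},\ 2^{-j\frac{n+2}p}\cdot 2^{j\frac{n+2}p}\|u_j\|_{L^p}\big)$; splitting the sum $\sum_j\|u_j\|_{L^p}$ at a threshold $j_0$ chosen to balance the two bounds $\|u\|_{L^{n+2}}$ and $\|u\|_{\dot B^{\frac{n+2}p,\frac{n+2}{2p}}_{p,1}}$ (geometric series in $j<j_0$ controlled by the $L^{n+2}$ side, in $j\ge j_0$ by the Besov side) yields exactly the stated power $\te=\frac{n+2}p$ on the $L^{n+2}$ factor. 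One then uses $\|u\|_{L^p}\le \sum_j\|u_j\|_{L^p}$ (valid since the $\phi_j$ form a partition of unity; here the $\ell^1$ summation is why the third index $1$ in $\dot B^{\cdot}_{p,1}$ appears).

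For \eqref{230519-5}, I would interpolate the $\dot B^{\frac{n+2}p,\frac{n+2}{2p}}_{p,1}$ norm between $L^{n+2}$ and $\dot B^{1+\frac{n+2}p,\frac12+\frac{n+2}{2p}}_{p,1}$, using the same dyadic splitting: for $j<j_0$ estimate $\|u_j\|_{L^p}$ using \eqref{230519-4} (or directly Bernstein from $L^{n+2}$), and for $j\ge j_0$ gain the factor $2^{-j}$ from the extra derivative in $\dot B^{1+\frac{n+2}p,\cdot}_{p,1}$. Summing the geometric series $\sum_{j<j_0}2^{j\frac{n+2}p}\|u_j\|_{L^p}$ (bounded via the $L^{n+2}\to L^p$ Bernstein gain, hence ultimately by $\|u\|_{L^{n+2}}$ times $2^{j_0(\text{something})}$) and $\sum_{j\ge j_0}2^{-j}\|u\|_{\dot B^{1+\cdots}_{p,1}}\lesssim 2^{-j_0}\|u\|_{\dot B^{1+\cdots}_{p,1}}$, then optimizing over $j_0$, produces the exponents $\frac{(1-\te)(1-\eta)}{\te+\eta-\te\eta}$ and $\frac{\eta}{\te+\eta-\te\eta}$ after the arithmetic with $\te=\frac{n+2}p$ and $\eta=\frac{n+2}{n+2+p}$; note $(1-\te)(1-\eta)+\eta = \te+\eta-\te\eta$, so the two exponents sum to $1$, consistent with homogeneity (both $L^{n+2}$ and the higher Besov space are at scaling dimensions that must bracket that of $\dot B^{\frac{n+2}p,\cdot}_{p,1}$).

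The main obstacle is getting the combinatorics of the dyadic split exactly right so that the optimized thresholds $j_0$ produce the precise, somewhat unusual exponents in \eqref{230519-5} (the appearance of $\eta$ and the denominator $\te+\eta-\te\eta$ signals a two-step optimization rather than a single linear interpolation), and in ensuring the $\ell^1$ summations are legitimate on the half-space $\R_+\times(0,\infty)$ — which is handled by passing to the extension $E u$ on $\RN$, applying the whole-space Littlewood–Paley estimates, and using $\|Eu\|_{\dot B^{s,\frac s2}_{p,r}(\RN)}\approx\|u\|_{\dot B^{s,\frac s2}_{p,r}}$ together with \eqref{230126-2}. The elliptic regularity / trace subtleties do not enter here; everything is harmonic analysis on the anisotropic parabolic scale.
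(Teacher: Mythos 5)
Your Littlewood--Paley/Bernstein route is a legitimate alternative to the paper's argument, but the paper proceeds more abstractly and more briefly. For \eqref{230519-4} the paper simply interpolates $L^p$ between $L^\infty$ and $L^{n+2}$ (with $\frac1p=\frac{\te}{n+2}+\frac{1-\te}{\infty}$, which forces $\te=\frac{n+2}p$) and then invokes the embedding $\dot B^{\frac{n+2}p,\frac{n+2}{2p}}_{p,1}\hookrightarrow L^\infty$; no dyadic decomposition is needed. For \eqref{230519-5} the paper uses the real-interpolation identity $\dot B^{\frac{n+2}p,\cdot}_{p,1}=(L^p,\dot W^{1+\frac{n+2}p,\cdot}_p)_{\eta,1}$ from Proposition \ref{prop0215}(1) to write $\|u\|_{\dot B^{\frac{n+2}p,\cdot}_{p,1}}\le c\|u\|_{L^p}^{1-\eta}\|u\|_{\dot B^{1+\frac{n+2}p,\cdot}_{p,1}}^{\eta}$, then substitutes \eqref{230519-4} for the $L^p$ factor and absorbs the resulting $\|u\|_{\dot B^{\frac{n+2}p,\cdot}_{p,1}}^{(1-\te)(1-\eta)}$ into the left side. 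Your dyadic approach would actually be a one-step optimization, not the two-step affair you anticipate: with $u_j=u*\phi_j$, Bernstein gives $2^{j\frac{n+2}p}\|u_j\|_{L^p}\lesssim 2^j\|u\|_{L^{n+2}}$ for $j<j_0$, while $2^{j\frac{n+2}p}\|u_j\|_{L^p}\le 2^{-j}\cdot 2^{j(1+\frac{n+2}p)}\|u_j\|_{L^p}$ for $j\ge j_0$, and optimizing over $j_0$ yields exponents $\tfrac12,\tfrac12$ directly.

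Two concrete errors. First, your identity $(1-\te)(1-\eta)+\eta=\te+\eta-\te\eta$ is false in general: the left side is $1-\te(1-\eta)$ and the right is $\te(1-\eta)+\eta$, and these agree only when $\te=\frac12$, i.e.\ $p=2(n+2)$. That said, your scaling instinct (the exponents must sum to $1$ and balance, since $\dot B^{\frac{n+2}p,\cdot}_{p,1}$ has parabolic scaling weight $0$, $L^{n+2}$ weight $-1$, and $\dot B^{1+\frac{n+2}p,\cdot}_{p,1}$ weight $+1$) is correct, and it exposes a typo in the lemma as stated: the $L^{n+2}$ exponent in \eqref{230519-5} should read $\frac{\te(1-\eta)}{\te+\eta-\te\eta}$, not $\frac{(1-\te)(1-\eta)}{\te+\eta-\te\eta}$. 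With $\eta=\frac{n+2}{n+2+p}=\frac{\te}{1+\te}$ one checks $\te+\eta-\te\eta=2\eta$, so both exponents equal $\tfrac12$, in agreement with what your dyadic argument would produce. Second, your side remark that $\dot B^{\frac{n+2}p,\frac{n+2}{2p}}_{p,1}\subset\dot B^{0}_{n+2,n+2}$ by Proposition \ref{prop0215}(2) is a scaling mismatch: $\frac{n+2}p-\frac{n+2}p=0$ while $0-\frac{n+2}{n+2}=-1$, so the hypothesis $s_0-\frac{n+2}{p_0}=s_1-\frac{n+2}{p_1}$ fails. The relevant embedding (and the one the paper uses) is $\dot B^{\frac{n+2}p,\frac{n+2}{2p}}_{p,1}\hookrightarrow L^\infty$, obtained by taking $p_1=\infty$, $s_1=0$.
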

We proved Lemma \ref{lemma0601} in Appendix   \ref{prooflemma0601}.

We now define the weak solution for the Stokes equations  \eqref{maineq-stokes}. 
\begin{defin}[Weak solution of  the Stokes equations]
\label{stokesdefinition}
Let  $1<p<\infty$ and $1\leq \al\leq 2$.
Let $u_0$, $g$ and $   {\mathcal F}$ satisfy the same hypotheses as in Theorem \ref{thm-stokes}.
A vector field $u\in    B^{\al,\frac{\al}2}_{p,q}(\R_+ \times (0, \infty))$  is called a weak solution of the Stokes equations \eqref{maineq-stokes} if  the following conditions are satisfied:
 \begin{align*}
&\int^\infty_0\int_{\R_+} \big( - u\cdot(  \Phi_t+ \Delta \Phi)   + {\mathcal F}:\nabla \Phi  \big) dxdt\\
& \quad = + \int_0^\infty \int_{\Rn} g(x', t) \cdot \frac{\pa \Phi}{\pa {\bf n}} d x' dt + \int_{\R_+} u_0(x) \cdot \Phi(x,0) dx 
\end{align*}
for each $\Phi\in C^\infty_c(\overline{\R_+}\times [0,\infty))$ with ${\rm div} _x\Phi=0$ and $\Phi|_{x_n =0} =0$. In addition, for each $\Psi\in C^1_c(\R_+)$
\begin{equation}\label{Stokes-bvp-2200}
\int_{\R_+} u(x,t) \cdot \na \Psi(x) dx =0  \quad  \mbox{ for all}
\quad 0 < t< \infty.
\end{equation}
\end{defin}

Based on the same method, we define the weak solution of  the system   \eqref{maineq2}.

\begin{defin}[Weak solution to the non-Newtonian incompressible fluid] Let $1<p <\infty$ and $1 \leq \al \leq 2$ .
Let $u_0$ and $g$ satisfy the same hypothesis as in Theorem \ref{thm-navier}.
A vector field $u\in B^{\al,\frac{\al}2}_{p,q}(\R_+ \times (0, \infty))$ is called a weak solution of the non-Newtonian fluids \eqref{maineq2} if the following variational formulations are satisfied:\\
 \begin{align}\label{weaksolution-NS}
  \int^\infty_0\int_{\R_+} \Big(- u\cdot \Phi_t +  \big( S(Du)   +(u\otimes u) \big):\nabla \Phi \Big) dxdt = \int_{\Rn} u_0 (x) \cdot \Phi(x,0) dx
\end{align}
for each $\Phi\in C^\infty_c(\overline{\R_+}\times [0,\infty))$ with $\mbox{\rm div} _x\Phi=0$ and $\Phi|_{x_n =0} =0$. In addition, for each $\Psi\in C^1_c(\R_+)$, $u$ satisfies  \eqref{Stokes-bvp-2200}.
\end{defin}

\section{\bf Preliminaries.}
\label{preliminary}
\setcounter{equation}{0}
In the sequel, we denote  the fundamental solutions of the Laplace equation and the heat equation by $N$ and $\Gamma$, respectively, 
\begin{align*}
N(x) = \left\{\begin{array}{ll}\vspace{2mm} \displaystyle -\frac{1}{(n-2)\om_n} \frac{1}{|x|^{n-2}},& n \geq 3,\\
\displaystyle \frac1{2\pi} \ln |x|, & n =2,
\end{array}
\right. \qquad
\Gamma(x,t)  = \left\{\begin{array}{ll} \vspace{2mm}
\displaystyle\frac{1}{\sqrt{4\pi t}^n} e^{ -\frac{|x|^2}{4t} }, &   t > 0,\\
0,  & t < 0,
\end{array}
\right.
\end{align*}
where $\om_n$ is the measure of the unit sphere in $\R$.

We define several  integral operators.
\begin{align*}
N*' f(x,t) & = \int_{\Rn} N(x'-y',x_n) f(y',t) dy',\\
N* g(x) &  = \int_{\R_+ } N(x-y) g(y) dy,\\
 N^** g(x)& = \int_{\R_+ } N(x-y^*) g(y) dy \quad y^* = (y', -y_n),\\
 \Ga g(x,t) & = \int_{\R} \Ga(x-y, t) g(y)dy,\\
{\mathcal U}f  (x,t) &  = \int_{0}^t \int_{\Rn}  D_{x_n} \Ga(x'-y',x_n, t-s) f (y',s) dy'ds.
\end{align*}

\begin{prop}
\label{proppoisson2}
Let $ 0< \al$ and $k \in {\mathbb N} \cup \{ 0\}$ and     $1< p < \infty$.
If $f\in L^p(0, \infty; \dot{B}^{\al-\frac{1}{p}}_{p,p}(\Rn))\, \cap \,  \dot B^{\frac{\al}{2}}_{p,p}((0, \infty);\dot{B}^{-\frac{1}{p}}_{p,p}(\Rn))$, then,
\begin{align*}
\|\na N*' f\|_{   \dot W^{2k, k }_{p} }&\leq c   \big( \|f\|_{L^p(0, \infty;\dot {B}^{2k-\frac{1}{p}}_{p,p}(\Rn))} + \|f\|_{\dot W_{p}^{ k }(0, \infty;\dot{B}_{p,p}^{-\frac{1}{p}}(\Rn))}  \big), \quad k \in {\mathbb N} \cup \{ 0\},\\
\|\na N*' f\|_{  B^{\al,\frac{\al}{2}}_{p,q}}&\leq c  \|f\|_{ \dot {A}^{\al, \frac{\al}2}_{p,q} },  \quad \al > 0.
\end{align*}
\end{prop}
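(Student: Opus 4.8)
The plan is to reduce everything to the Fourier-side symbol of the operator $f\mapsto \na N*' f$ and then invoke the already-established anisotropic multiplier and trace machinery. Recall that $N*'f(x,t)=\int_{\Rn}N(x'-y',x_n)f(y',t)\,dy'$ is a convolution in the tangential variables only and the parameter $t$ is inert, so it suffices to understand the one-parameter family of operators $g\mapsto \na_x\big(N(\cdot,x_n)*'g\big)$ acting on functions on $\Rn$ for each fixed $x_n>0$. Taking the tangential Fourier transform, $\widehat{N(\cdot,x_n)}(\xi')=c\,|\xi'|^{-1}e^{-|\xi'|x_n}$ up to a harmless multiple of $x_n$-independent constants, so that the full operator $\na N*'$ has symbol (in $(\xi',\xi_n\leftrightarrow \partial_{x_n})$ schematic form) behaving like $e^{-|\xi'|x_n}$ times a zeroth-order symbol in $\xi'$; integrating against $e^{-|\xi'|x_n}$ in $x_n$ over $(0,\infty)$ produces the $|\xi'|^{-1}$ that precisely undoes the boundary Besov index shift $\al-\tfrac1p\to\al-\tfrac1p+\tfrac1p=\al$ and, via the identification in \eqref{interpolationspace2}, also matches the half-order time derivative component. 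I would phrase this as: $\na N*'$ is, up to bounded Fourier multipliers, the Poisson extension operator composed with a tangential Riesz transform, and the Poisson extension is well known to be a coretraction from the boundary trace spaces to the interior anisotropic spaces.

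Concretely, the steps in order are as follows. First, for the integer-order estimate ($k\in\mathbb N\cup\{0\}$), use Proposition \ref{prop0215}(3), which says $\dot W^{k,\frac k2}_p=L^p(0,\infty;\dot W^k_p(\R_+))\cap L^p(\R_+;\dot W^{k/2}_p(0,\infty))$, so it is enough to bound $\na N*'f$ separately in these two ingredients. For the spatial ingredient one differentiates under the integral sign: $D_x^\be D_{x_n}^m(\na N*'f)$ is a convolution in $y'$ against a kernel that is, after the $x_n$-integration is incorporated, a Calderón–Zygmund-type kernel of the right homogeneity, and the $L^p(\Rn)$ bound for each fixed $t$ followed by integration in $t$ gives the $L^p(0,\infty;\dot B^{k-1/p}_{p,p}(\Rn))$ term; the trace space $\dot B^{k-1/p}_{p,p}(\Rn)$ appears because the Poisson extension of a boundary datum in $\dot B^{k-1/p}_{p,p}(\Rn)$ lands in $\dot W^k_p(\R_+)$ (classical Gagliardo trace theorem). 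For the temporal ingredient, since the operator commutes with $D_t^{1/2}$ and $t$ is merely a parameter, $D_t^{k/2}(\na N*'f)=\na N*'(D_t^{k/2}f)$, and the spatial $\dot B^{-1/p}_{p,p}(\Rn)$–to–$L^p(\R_+)$ mapping of $\na N*'$ handles this, yielding the $\dot W^{k/2}_p(0,\infty;\dot B^{-1/p}_{p,p}(\Rn))$ term; this is exactly the mapping property encoded in the definition \eqref{interpolationspace2} of $\dot A^{\al,\al/2}_{p,q}$. This proves the first inequality.

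Second, for the fractional statement, I would not redo the analysis but interpolate. Write $\al$ with $k<\al<k+1$, $\al=\te k+(1-\te)(k+1)$, and apply the first inequality at the integer levels $k$ and $k+1$; by Proposition \ref{prop0215}(1) the real interpolation $(\dot W^{k,k/2}_p,\dot W^{k+1,(k+1)/2}_p)_{\te,q}=\dot B^{\al,\al/2}_{p,q}$ on the target side, while on the source side the pair of intersection spaces $L^p(0,\infty;\dot B^{k-1/p}_{p,p}(\Rn))\cap\dot W^{k/2}_p((0,\infty);\dot B^{-1/p}_{p,p}(\Rn))$ and its $k+1$ analogue interpolate — by definition \eqref{interpolationspace2} — to $\dot A^{\al,\al/2}_{p,q}(\Rn\times(0,\infty))$. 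Since $\na N*'$ is bounded between the endpoint pairs, the interpolation functor gives boundedness $\dot A^{\al,\al/2}_{p,q}\to\dot B^{\al,\al/2}_{p,q}$, which is the second inequality. Finally one reduces the half-space statement to $\Rn\times\mathbb R$ via the extension operators $E$ already set up in Section \ref{notation} (using \eqref{230126-1}, \eqref{230126-2}), taking infima over extensions.

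The main obstacle I anticipate is the bookkeeping at the integer base case: one must verify that the kernel $D_x^\be D_{x_n}^m N(x'-y',x_n)$, after the operator is viewed as mapping boundary data to interior data, genuinely has the mixed homogeneity needed to invoke a parabolic (or here merely elliptic-in-space, parameter-in-time) Calderón–Zygmund theorem, and — more delicately — that the temporal half-derivative really does commute past the tangential convolution so cleanly, which is where the precise definition of $D_t^{1/2}$ via its Fourier symbol $(a_0+b_0\,\mathrm{sign}\,\tau)|\tau|^{1/2}$ and the $L^p(\mathbb R^{n+1})$ multiplier theorem cited from \cite{CK,BL} must be used. Everything after that is a formal consequence of the interpolation identities in Proposition \ref{prop0215} and the definition of $\dot A^{s,s/2}_{p,q}$.
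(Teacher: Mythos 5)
Your proposal follows essentially the same route as the paper: use the Riesz-transform identity $D_{x_k}N*'f = D_{x_n}N*'R_kf$ to reduce $\na N*'$ to the Poisson operator, bound the $L^p(0,\infty;\dot W^k_p(\R_+))$ and $L^p(\R_+;\dot W^{k/2}_p(0,\infty))$ ingredients separately (using the Poisson trace theorem for the first and commutation of $D_t^{k/2}$ with the tangential convolution for the second), combine via Proposition~\ref{prop0215}(3), and real-interpolate to get the fractional estimate with $\dot A^{\al,\frac{\al}2}_{p,q}$ on the source side. Your framing of the spatial step as "Calder\'on--Zygmund" is unnecessary (it is just the Poisson-extension-to-$\dot W^k_p$ bound, which you also invoke), but the argument is correct and matches the paper's.
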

We proved Proposition \ref{proppoisson2} in Appendix \ref{proofoflemma230518-0}.

\begin{prop}
\label{proppoisson0518}
Let $ g \in \dot W^{k,\frac{k}2} _p  $ or $ g\in \dot B^{s, \frac{s}2}_{p,q} $,  $ 1 < p < \infty, \, 1 \leq q \leq \infty$. Then,  
\begin{align*}
\|\na^2  N* g\|_{ \dot W^{k,\frac{k}2} _p  }, \,\, \|\na^2  N^* * g\|_{ \dot W^{k,\frac{k}2} _p }&\leq c \|g\|_{ \dot W^{k,\frac{k}2} _p  }, \quad   k  \geq 0,\\
\|\na^2 N* g\|_{ \dot B^{s, \frac{s}2}_{p,q} }, \,\, \|\na^2 N^* * g\|_{ \dot B^{s, \frac{s}2}_{p,q} }&\leq c   \|g\|_{\dot B^{s, \frac{s}2}_{p,q} }, \quad    s > 0.
\end{align*}
\end{prop}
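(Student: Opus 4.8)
Since the operator $T:=\na^2 N*$ acts by convolution in the spatial variable only, the plan is to estimate it separately on the two factors of the product characterization
$\dot W^{s,\frac s2}_p = L^p(0,\infty;\dot W^s_p(\R_+))\cap L^p(\R_+;\dot W^{\frac s2}_p(0,\infty))$ of Proposition~\ref{prop0215}(3), and then to pass between the $\dot W$- and $\dot B$-scales and between integer and fractional orders by the interpolation identities of Proposition~\ref{prop0215}(1). The companion operator $\na^2 N^{*}*$ is reduced to $T$ by the substitution $y\mapsto y^{*}=(y',-y_n)$, which turns it into a convolution with $\na^2 N$ over the lower half-space; so it suffices to treat $T$.

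\emph{Temporal factor.} The spatial Fourier symbol of $T$ is $c\,\xi\otimes\xi/|\xi|^2$, which is smooth away from $\xi=0$, bounded, and homogeneous of degree $0$; hence $T$ is a Mikhlin--H\"ormander multiplier and is bounded on $L^p(\R_+\times(0,\infty))$, $1<p<\infty$ (apply the classical Calder\'on--Zygmund bound on $\RN$ to the zero extension of $g$ and restrict; the reflected operator is handled identically). Because $T$ does not touch the $t$--variable it commutes with every $D_t^m$ and, more generally, with the fractional time derivatives $D_t^{s/2}$; therefore $\|Tg\|_{L^p(\R_+;\dot W^{\frac s2}_p(0,\infty))}\le c\,\|D_t^{s/2}g\|_{L^p}=c\,\|g\|_{L^p(\R_+;\dot W^{\frac s2}_p(0,\infty))}$ for every $s\ge0$, which disposes of the temporal factor.

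\emph{Spatial factor.} It remains to show that $T$ is bounded on $\dot W^s_p(\R_+)$ for $1<p<\infty$, $s\ge0$, uniformly in the time parameter; then $\int_0^\infty\|Tg(\cdot,t)\|^p_{\dot W^s_p(\R_+)}\,dt\le c\int_0^\infty\|g(\cdot,t)\|^p_{\dot W^s_p(\R_+)}\,dt$. This is the classical $L^p$--estimate for the Newtonian potential on a half-space. I would extend $g(\cdot,t)$ to $\ti g\in\dot W^s_p(\R)$ with $\|\ti g\|_{\dot W^s_p(\R)}\le c\,\|g(\cdot,t)\|_{\dot W^s_p(\R_+)}$ and write, for $x_n>0$,
$$Tg(x)=T\ti g(x)-\int_{\{y_n<0\}}\na^2 N(x-y)\,\ti g(y)\,dy=:A(x)-B(x).$$
Here $A$ is controlled on $\dot W^s_p(\R)$, hence on $\dot W^s_p(\R_+)$, by the whole-space Calder\'on--Zygmund bound. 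The term $B$, restricted to $\{x_n>0\}$, is smooth; differentiating it in $x$ and integrating by parts in $y$ to move the derivatives off the kernel produces an interior term of the same form together with single- and double-layer potentials $N*'\big(D^{\be}_x\ti g|_{y_n=0}\big)$ of the boundary traces of $g$ and its normal derivatives, and each such layer potential is estimated by Proposition~\ref{proppoisson2} combined with the trace estimate of Lemma~\ref{tracelemm}. (Alternatively, for $0\le s<1+\frac1p$ one can argue directly by even/odd reflection across $\{x_n=0\}$, which is bounded on $\dot W^s_p$ in that range, and the relation $\De(N*g)=-g$ may be used to trade normal for tangential derivatives.) Collecting the pieces and integrating the $p$-th powers in $t$, then combining with the temporal factor via Proposition~\ref{prop0215}(3) and interpolating by Proposition~\ref{prop0215}(1), yields both asserted estimates; for the Besov statement one uses real interpolation throughout, or, when $0<\al<1$, the intrinsic seminorm of Proposition~\ref{prop0215}(4).

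\emph{Main obstacle.} The only genuine difficulty is the spatial half-space bound: after applying the order-zero operator $\na^2 N*$ to $g$ on $\R_+$, neither the zero extension nor the even/odd reflection of $g$ stays in $\dot W^s_p(\R)$ once $s\ge\frac1p$, so the whole-space Calder\'on--Zygmund theorem cannot be invoked directly. What makes the argument close is that the error incurred by extending is an off-diagonal, smoothing contribution whose only non-trivial pieces are layer potentials of the traces of $g$ and of its derivatives --- precisely the objects already controlled by Proposition~\ref{proppoisson2} and Lemma~\ref{tracelemm}. Everything else is Mikhlin multiplier theory and interpolation.
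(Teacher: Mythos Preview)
Your overall architecture matches the paper's exactly: use the product characterization of Proposition~\ref{prop0215}(3) to separate the spatial and temporal contributions, handle the temporal piece by commuting $T$ with $D_t^{k/2}$ and invoking the $L^p$ bound, then interpolate via Proposition~\ref{prop0215}(1) to reach the Besov scale. The temporal step in the paper is literally the computation you wrote (extend in $t$, pull $D_t^{k/2}$ through, apply Calder\'on--Zygmund in space).

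Where you and the paper diverge is the spatial factor. The paper does not attempt to prove the half-space bound $\|\na^2 N*g\|_{L^p(0,\infty;\dot W^k_p(\R_+))}\le c\|g\|_{L^p(0,\infty;\dot W^k_p(\R_+))}$ from scratch; it simply quotes it as Proposition~3.4 of \cite{CJ3}. Your extension/splitting $Tg=A-B$ with $A$ the full-space Calder\'on--Zygmund piece and $B$ the off-diagonal remainder is a legitimate route to the same inequality, and the observation that $B$ decomposes, after integration by parts, into an interior piece of the same type plus boundary layer potentials is correct. One caveat: you invoke Proposition~\ref{proppoisson2} and Lemma~\ref{tracelemm} for the layer terms, but those are the \emph{space--time} estimates, whereas at this point you are working at a fixed time in purely spatial Sobolev spaces; you need the stationary analogues (Poisson-extension bounds $\dot B^{k-\frac1p}_{p,p}(\Rn)\to\dot W^k_p(\R_+)$ and the spatial trace theorem), which are classical but are not the propositions you cite. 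Once that substitution is made your spatial argument closes, and in fact it supplies what the paper outsources to \cite{CJ3}; conversely, the paper's route is shorter precisely because it treats that half-space Calder\'on--Zygmund bound as known.
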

We proved Proposition \ref{proppoisson0518} in Appendix  \ref{proofoflemma230518-1}.

\begin{lemm}
\label{lem-T}
 Let $1<p<\infty$, $ 1 \leq q \leq \infty$ and $ 0< \al < 2$. Let $ f \in \dot{B}^{\al-\frac{1}{p},\frac{\al}{2}-\frac{1}{2p}}_{p,q}(\Rn \times (0, \infty))$ with $f|_{t =0} =0$. Accordingly,
\begin{align*}
\| {\mathcal U}f  \|_{\dot{B}^{\al,\frac{\al}{2}}_{p,q}}  \leq c \| f\|_{\dot{B}^{\al-\frac{1}{p},\frac{\al}{2}-\frac{1}{2p}}_{p,q}(\Rn \times (0, \infty))}, \quad 
\| {\mathcal U}f  \|_{\dot{W}^{\al,\frac{\al}{2}}_{p}(\R_+ \times(0, \infty))} \leq c \| f\|_{\dot{B}^{\al-\frac{1}{p},\frac{\al}{2}-\frac{1}{2p}}_{p,p}(\Rn \times  (0, \infty))}.
\end{align*}
\end{lemm}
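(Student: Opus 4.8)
The operator $\mathcal{U}f$ is precisely the solution of the homogeneous heat equation $v_t-\Delta v=0$ in $\mathbb{R}^n_+\times(0,\infty)$ with zero initial data and boundary value $v|_{x_n=0}=f$ on $\mathbb{R}^{n-1}\times(0,\infty)$, up to a harmless constant: indeed $2\int_{\mathbb{R}^{n-1}}D_{x_n}\Gamma(x'-y',x_n,t-s)\,dy'$ is (minus) the Gaussian; so $\mathcal{U}f$ is the classical Poisson-type extension for the heat equation in a half-space via the conormal derivative of the heat kernel. The plan is therefore to prove the estimate directly at the level of Fourier-multiplier / kernel bounds on the whole space $\mathbb{R}^{n+1}$, then restrict. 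First I would reduce to the whole-space statement: take an extension $\tilde f\in\dot B^{\al-\frac1p,\frac\al2-\frac1{2p}}_{p,q}(\mathbb{R}^{n-1}\times\mathbb{R})$ of $f$ with $\tilde f|_{t=0}=0$ (possible because $\al-\frac1p>\frac2p$ is not needed here—rather one uses the trace-zero characterization, or one simply works with $f$ extended by the standard reflection keeping the vanishing at $t=0$), form $\mathcal U\tilde f$ on $\mathbb{R}^n\times\mathbb{R}$ using the retarded integral $\int_{-\infty}^t$, and show $\|\mathcal U\tilde f\|_{\dot B^{\al,\frac\al2}_{p,q}(\mathbb{R}^{n+1})}\le c\|\tilde f\|_{\dot B^{\al-\frac1p,\frac\al2-\frac1{2p}}_{p,q}(\mathbb{R}^{n-1}\times\mathbb{R})}$; taking the infimum over extensions gives the claim on $\mathbb{R}^n_+\times(0,\infty)$.

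For the whole-space bound I would proceed by real interpolation. Fix the two endpoint integer levels bracketing $\al$, namely $\al_0=1$ and $\al_1=2$ (since $0<\al<2$; if $\al\le 1$ take $\al_0=0$, $\al_1=1$). On each integer level I establish the $\dot W$-estimate
\begin{align*}
\|\mathcal U f\|_{\dot W^{k,\frac k2}_p(\mathbb{R}^{n+1})}\le c\,\|f\|_{\dot W^{k-\frac1p,\frac k2-\frac1{2p}}_{p,p}(\mathbb{R}^{n-1}\times\mathbb{R})},\qquad k=0,1,2,
\end{align*}
where the space on the right at half-integer/fractional level is just the trace space $\dot B^{k-\frac1p,\frac k2-\frac1{2p}}_{p,p}$. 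This is the standard parabolic trace estimate for the heat half-space problem: apply $D_x^\be D_t^l$ with $|\be|+2l=2k$ to $\mathcal U f$, take Fourier transform in $(x',t)$, and read off that the resulting operator has symbol a bounded $L^p$-multiplier in the tangential variables times the decaying factor $e^{-x_n\sqrt{|\xi'|^2+i\tau}}$ (with $\mathrm{Re}\sqrt{\cdot}>0$), which one integrates against the boundary data; the $L^p(\mathbb{R}^{n+1})$-bound follows from the Mikhlin-type multiplier theorem for parabolic symbols quoted in Section 2 (via \cite{CK}, \cite{BL}), exactly as in the analogous mapping properties of $N*'$ in Proposition \ref{proppoisson2}. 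Interpolating between $k=1$ and $k=2$ with $\te$ chosen so $\al=1\cdot(1-\te)+2\te$, and using Proposition \ref{prop0215}(1) together with the fact that the real interpolation of the tangential trace spaces reproduces $\dot B^{\al-\frac1p,\frac\al2-\frac1{2p}}_{p,q}$ (Lemma \ref{tracelemm} and the retraction/coretraction Proposition \ref{prop-recore}), yields the $\dot B^{\al,\frac\al2}_{p,q}$-estimate; the complex interpolation between $k=1,2$ gives the $\dot W^{\al,\frac\al2}_p$-estimate with the $\dot B_{p,p}$ trace norm on the right, which is the second inequality.

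The main obstacle I anticipate is twofold. First, handling the \emph{homogeneous} (rather than inhomogeneous) norms: one must be careful that $\mathcal U f$ is well-defined modulo the right null space and that the scaling of the estimate is consistent—this is why the exponents $\al-\frac1p$ and $\frac\al2-\frac1{2p}$ appear, matching the parabolic scaling $x\sim\sqrt t$ and the $\frac1p$-loss of a half-space trace. Second, and more delicate, is the role of the hypothesis $f|_{t=0}=0$: without it, $\mathcal U f$ would pick up a contribution behaving like the heat extension of the time-zero trace of $f$, which is not controlled by the anisotropic Besov norm of $f$ alone (there is a compatibility/corner issue at $\{x_n=0\}\cap\{t=0\}$); the vanishing of $f$ at $t=0$ kills this term and is precisely what lets the retarded integral $\int_0^t$ be replaced by $\int_{-\infty}^t$ after extending $f$ by zero for $t<0$, so that the clean whole-space multiplier analysis applies. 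I would isolate this point in a short lemma: if $f\in\dot B^{\al-\frac1p,\frac\al2-\frac1{2p}}_{p,q}(\mathbb{R}^{n-1}\times(0,\infty))$ with $f|_{t=0}=0$ and $\al-\frac1p>\frac2{p}$... actually since $\al<2$ and we only need $\frac\al2-\frac1{2p}>\frac1{p}$ is not guaranteed, one instead uses that the extension by zero is bounded on $\dot B^{\,\beta}$ of a half-line for $0<\beta<\frac1p$ in the time variable in the appropriate range, or argues by density from smooth $f$ vanishing near $t=0$ and passes to the limit. Once this reduction is in place, the remainder is the routine parabolic multiplier computation and interpolation described above.
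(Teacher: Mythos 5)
Your proposal follows the same overall strategy as the paper: extend $f$ by zero in time (which is exactly where the hypothesis $f|_{t=0}=0$ enters), establish endpoint $\dot W$-estimates at integer regularity levels, and interpolate. You correctly identify the key role of the vanishing trace. There are two places where your plan diverges from the paper and where I would push you to tighten the argument.

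First, the paper interpolates between the endpoints $k=0$ and $k=2$, not between $k=1$ and $k=2$ (or $k=0$ and $k=1$) as you propose. The choice $k=0,2$ is the natural one: the $k=2$ estimate $\|\mathcal U f\|_{\dot W^{2,1}_p}\le c\|f\|_{\dot B^{2-\frac1p,1-\frac1{2p}}_{p,p}}$ is the classical Ladyzhenskaya--Solonnikov--Uraltseva maximal-regularity result, and the $k=0$ estimate $\|\mathcal U f\|_{L^p}\le c\|f\|_{\dot B^{-\frac1p,-\frac1{2p}}_{p,p}}$ comes from a \emph{duality} argument (the paper cites Lemma 3.6 of \cite{CJ3}; the trace space has negative regularity, so one tests against the adjoint kernel rather than applying a multiplier theorem directly). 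Your proposed $k=1$ endpoint $\dot W^{1,\frac12}_p$ involves the nonlocal operator $D_t^{1/2}$ and is not one of the classical endpoints; it is normally obtained \emph{by} interpolation rather than proved directly, so choosing it as a base case adds work without benefit. You can still run your scheme, but the cleaner route is the one the paper takes: prove (or cite) only $k=0$ and $k=2$, then apply Proposition \ref{prop0215}(1) with $\theta=\al/2$ for both the real ($\dot B$) and complex ($\dot W$) interpolation claims.

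Second, you describe the $k=0$ case as another instance of ``the routine parabolic multiplier computation,'' but a multiplier argument does not directly produce a bound by a negative-order trace norm $\dot B^{-\frac1p,-\frac1{2p}}_{p,p}$; you need the duality formulation explicitly. This is a genuine (if small) gap in your plan. Apart from that, your symbol heuristic for $\mathcal Uf$ (multiplier of order $(|\xi'|+|\tau|^{1/2})^{1-1/p}$ after integrating out $x_n$) and your discussion of why the zero extension is bounded on the homogeneous Besov scale given $f|_{t=0}=0$ are both sound and aligned with what the paper does.
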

\begin{proof}
Let $\tilde f$ be a zero extension of $f$ with respect to $t$. Since $f|_{t =0} =0$,  $\| \tilde f\|_{\dot{B}^{\al-\frac{1}{p},\frac{\al}{2}-\frac{1}{2p}}_{p,p}(\Rn \times   {\mathbb R})}
\leq c \| f\|_{\dot{B}^{\al-\frac{1}{p},\frac{\al}{2}-\frac{1}{2p}}_{p,p}(\Rn \times  (0, \infty))}$.  Let $\tilde {\mathcal U} \tilde f  (x,t)   = \int_{-\infty}^t \int_{\Rn}  D_{x_n} \Ga(x'-y',x_n, t-s)\tilde f (y',s) dy'ds$. 
From \cite{lady?}, we obtain the following estimate
\begin{equation}\label{230519-2}
\| {\mathcal U}  f \|_{\dot{W}^{2,1}_p }\leq c \|   \tilde{\mathcal U} \tilde f \|_{\dot{W}^{2,1}_p(\R \times {\mathbb R})}\leq c\| \tilde f\|_{\dot{B}_{p,p}^{2-\frac{1}{p},1-\frac{1}{2p}}(\Rn \times {\mathbb R})}  \leq c\|  f\|_{\dot{B}_{p,p}^{2-\frac{1}{p},1-\frac{1}{2p}}(\Rn \times  (0, \infty))}.
\end{equation}

By the same argument in the proof  of  Lemma 3.6 in \cite{CJ3},  
\begin{align}\label{230519-3}
\|  {\mathcal U}  f \|_{L^p  } \leq c \|  \tilde{\mathcal U} \tilde f \|_{L^p (\R \times {\mathbb R})  } \leq c \|  \tilde f\|_{\dot B^{-\frac1p, -\frac1{2p}}_{p,p} (\Rn \times  {\mathbb R})} \leq c \|   f\|_{\dot B^{-\frac1p, -\frac1{2p}}_{p,p} (\Rn \times   (0, \infty))}, 
\end{align}
where $\dot B^{-\frac1p, -\frac1{2p}}_{p,p}(\Rn \times  {\mathbb R})$ 
  is dual space of $\dot B^{\frac1p, \frac1{2p}}_{ p',p'}(\Rn \times  {\mathbb R})$. Using the properties of complex interpolation and  real interpolation between \eqref{230519-2} and \eqref{230519-3}, we obtain Lemma \ref{lem-T}.
\end{proof}

\section{Proof of Theorem \ref{thm-stokes}}\label{Proof1}
\setcounter{equation}{0}

Because the proofs are similar, we only prove (1) of  Theorem \ref{thm-stokes}.  We will follow the proof of Theorem 1.2 in \cite{CJ3}.

Let $\widetilde {\mathcal F}$ be an extension of $ {\mathcal F}$ over $\R \times (0, \infty)$ such that 
$\| \widetilde {\mathcal F}\|_{\dot B^{\al -1,\frac{\al -1}2}_{p,q} (\R \times (0, \infty))} \leq c \| {\mathcal F}\|_{ \dot B^{\al -1,\frac{\al -1}2}_{p,q} }$ and  $\widetilde {u}_0$ be an extension of $u_0$ with ${\rm div} \, \widetilde{u}_0 =0$ and $\| \widetilde u_0\|_{ \dot B^{\al -\frac2p}_{p,q} (\R \times (0, \infty))} \leq c \| u_0\|_{ \dot B^{\al -\frac2p}_{p,q} (\R_+)}$.
Let 
\begin{align*}
w^1(x,t) & = \int_0^t \int_{\R} \Ga (x-y, t-s) {\mathbb P} \, {\rm div} \, \widetilde {\mathcal F} (y,s) dyds,\\
w^2 (x,t) &  = \int_{\R} \Ga (x-y, t) \widetilde {u}_0 (y) dy, 
\end{align*}
where $({\mathbb P}f )_i = f_i  -\sum_{1 \leq j \leq n} R_i R_j f_j$ is the Helmholtz decomposition operator in $\R$.  The following are well-known results 
\begin{align}
 \| w^1\|_{\dot W^{k, \frac{k}2}_{p} } &\leq c \| w^1\|_{\dot W^{k, \frac{k}2}_{p} (\R \times (0, \infty))} \leq c  \| \widetilde {\mathcal F}\|_{ \dot W^{k -1,\frac{k}2 -\frac12}_{p} ({\mathbb R}^{n} \times (0, \infty))} \leq c \| {\mathcal F}\|_{  \dot W^{k-1, \frac{k}2 -\frac12}_{p}},\\
\| w^2\|_{\dot W^{k, \frac{k}2}_{p} } & \leq c \| w^2\|_{\dot W^{k, \frac{k}2}_{p} (\R \times (0, \infty))} \leq c  \| \widetilde {u}_0\|_{  \dot B^{k -\frac2p}_{p} (\R)} \leq c  \| u_0\|_{  \dot B^{k-\frac2p}_{p} (\R_+)}, \quad k \in {\mathbb N}.
\end{align}
Using the property of real interpolation, we have 
\begin{align} \label{0502-1}
\| w^1\|_{\dot B^{\al, \frac{\al}2}_{p,q}  }  \leq c \| {\mathcal F}\|_{  \dot B^{\al-1,\frac{\al}2 -\frac12}_{p,q}},\quad 
\| w^2\|_{\dot B^{\al, \frac{\al}2}_{p,q}  }   \leq c  \| u_0\|_{  \dot B^{\al -\frac2p}_{p,q} (\R_+)} \quad 1 < \al.
\end{align}

By  trace theorems for  usual homogeneous Sobolev space $\dot W^k_p (\R_+)$ (see (4) of Proposition \ref{prop0215} ), for $k  \in {\mathbb N}$, we have 
\begin{align}\label{0506-1}
\| w^i |_{x_n=0} \|_{L^p (0, \infty; \dot B^{k -\frac1p}_{p,p} (\Rn))} \leq     c \| w^i \|_{L^p (0, \infty, \dot  W_{p}^k (\R_+))} \leq c\|w^i\|_{ \dot W_{p}^{2k,k}}.
\end{align}
From Section 5.2 in \cite{CJ3}, we have 
\begin{align}\label{0506-2}
 \|w^i_n|_{x_n=0}\|_{ \dot W_{p}^{k}( 0,\infty;\dot{B}_{p,p}^{-\frac{1}{p}}(\Rn))}\leq c\|w^i\|_{L^p(\R_+;\dot W_p^{k}(0, \infty))} \leq c\|w^i\|_{ \dot W_{p}^{2k,k}}.
\end{align}
From \eqref{0506-1}, \eqref{0506-2},   the definition of $ \dot A_{p,q}^{\al -\frac1p,\frac{\al }{2} -\frac1{2p}} (\Rn \times (0, \infty)) $ and the properties of the real interpolations  (see \eqref{interpolationspace2}),  
\begin{align}
\label{t41}
 \|w^i_n|_{x_n=0}\|_{ \dot A_{p,q}^{\al -\frac1p, \frac{\al }{2} -\frac1{2p}}}\leq c\|w^i\|_{ \dot {B}_{p,q}^{\al, \frac{\al }{2}}},  \quad \al > 1.
\end{align}

Let  $\phi(x,t) = \int_{\Rn} N(x'-y', x_n)  h(y',t)  dy'$, $w^3 = \na \phi $ and $\pi = -\phi_t$, where\\
 $h : =  g_n(y',t) - w^1_n |_{x_n =0} - w^2_n |_{x_n =0}$.  From the compatibility condition \eqref{compatibility}, we have $h(x',0) =0$.

Accordingly, $(w^3, \pi)$ satisfies
\begin{align*}
\left\{\begin{array}{ll}\vspace{2mm}
&w^3_t -\De w^3 +\na \pi =0 \quad \mbox{in} \quad \R_+ \times (0, \infty),\\ \vspace{2mm}
&{\rm div} \, w^3 =0 \quad \mbox{in} \quad \R_+ \times (0, \infty),\\ \vspace{2mm}
&w^3|_{t =0} =  0,\\
& w^3|_{x_n =0} = (R_1' h , \cdots, R_{n-1}'h, h ),
\end{array}
\right.
\end{align*}
where $R_i', \,\, i = 1, \cdots, n-1$ are $n-1$ dimensional Riesz transform. Moreover, from Proposition \ref{proppoisson2}, \eqref{0502-1} and \eqref{t41},  we have
\begin{align*}
\| w^3 \|_{\dot B^{\al, \frac{\al}2}_{p,q} (\R_+ \times (0, \infty))} &  \leq c \| h\|_{  \dot A^{\al -\frac1p, \frac{\al}2 -\frac1{2p}}_{p,q} (\Rn \times (0, \infty))}\\
&  \leq c \big( \| g_n   \|_{  \dot A^{\al -\frac1p, \frac{\al}2 -\frac1{2p}}_{p,q} (\Rn \times (0, \infty))} + \| {\mathcal F}\|_{ \dot B^{\al-1,\frac{\al}2 -\frac12}_{p,q}}   +\| u_0 \|_{\dot B^{\al -\frac2p}_{p,q} (\R_+)}\big) .
\end{align*}

Let $G: = g    - w^1|_{x_n =0} - w^2|_{x_n =0} - w^3|_{x_n =0} $. 
Note that from  the compatibility condition \eqref{compatibility}  $ G|_{t =0} = g|_{t =0} -u_0|_{x_n =0} =0$ and  $G_n =0$. We solve the following equations 
\begin{align*}
\left\{\begin{array}{ll}\vspace{2mm}
&w^4_t -\De w^4 +\na q =0\quad \mbox{in} \quad \R_+ \times (0, \infty),\\ \vspace{2mm}
&{\rm div} \, w^4 =0 \quad \mbox{in} \quad \R_+ \times (0, \infty),\\ \vspace{2mm}
&w^4|_{t =0} =0, \quad w^4|_{x_n =0} = G.
\end{array}
\right.
\end{align*}

According to Section 5 in  \cite{CJ2}, $w^4$ can be rewritten in the following form
\begin{equation}\label{C60K-april7}
w^4_i(x,t) = -{\mathcal U} G_i(x,t)   -4\delta_{in} \sum_{j=1}^{n-1} {\mathcal U}R^{'}_jG_j+ 4
    \frac{\partial}{\partial x_i} {\mathcal S}(x,t), \,\, i=1,\cdots,n,
\end{equation}
where ${\mathcal S}$ is  defined by
\begin{align}\label{C70K-april7}
{\mathcal S}(x,t)         & = \int_{{\mathbb R}^n_+} \nabla_y (N(x-y) - N(x-y^*) ) \cdot F(y,t) dy,
\end{align}
where
\begin{equation}\label{C100K-april7}
F_j: = -\frac12{\mathcal U}G_j,\qquad j=1\cdots, n-1,
\quad
F_n: = \sum_{j=1}^{n-1}  {\mathcal U} R'_j G_j(x,t).
\end{equation}

Let $\tilde G $ be a zero extension over $\Rn \times {\mathbb R}$.  Accordingly, from Proposition \ref{proppoisson0518} and Lemma \ref{lem-T}, 
\begin{align}
\label{CK25-april11-1}
\begin{split}
 \| w^4\|_{ \dot B^{\al, \frac{\al}2}_{p,q}} & \leq c
  \|{\mathcal U} \tilde G\|_{ \dot B^{\al, \frac{\al}2}_{p,q} ( \R_+ \times {\mathbb R})}\leq c \|G\|_{\dot B^{\al -\frac1p,  \frac{\al}2- \frac1{2p}}_{p,q}(\Rn \times  (0, \infty))}\\
  & \leq c
  \big( \|g\|_{\dot B^{\al -\frac1p,  \frac{\al}2- \frac1{2p}}_{p,q}(\Rn \times  (0, \infty))} +\|w^1\|_{\dot B^{\al -\frac1p,  \frac{\al}2- \frac1{2p}}_{p,q}(\Rn \times  (0, \infty))}\\
  & \quad 
   + \|w^2\|_{\dot B^{\al -\frac1p,  \frac{\al}2- \frac1{2p}}_{p,q}(\Rn \times  (0, \infty))} + \|w^3\|_{\dot B^{\al -\frac1p,  \frac{\al}2- \frac1{2p}}_{p,q}(\Rn \times  (0, \infty))} \big).
   \end{split}
\end{align}
Note that  $w = w^1 + w^2 + w^3+ w^4$ is solution of equation \ref{maineq-stokes} and summing all estimates, we obtain the esmates \eqref{240718-1-1} and \eqref{240718-2-2}.
 
 Next, we show the uniqueness of solution in $\dot W^{1,\frac12}_p$, $  p < n+2$. Let $p^* =\frac{(n+2)p}{n+2 -p}$. Note that $\dot W^{1,\frac12}_p \subset L^{p^*}$. Suppose that  $(u_1,p_1)$ and $(u_2,p_2)$ are    weak
 solutions of  the Stokes equations \eqref{maineq-stokes} in the class $ L^{p^*} $ with the same data,
 then $u_1-u_2$ satisfies the variational formulation
\[
\int^\infty_0\int_{{\mathbb R}^n_+}(u_1-u_2)\cdot(-\phi_t-\Delta \phi+\nabla
\pi)dxdt=0\] for any $\phi \in C_0^\infty (\overline{{\mathbb R}^n_+} \times
[0,\infty))$ with $  \mbox{div}_x \phi =0,\ \phi|_{x_n=0}=0 \quad \mbox{for all} \quad t
\in (0,\infty)  $. Since $\{-\phi_t-\Delta \phi+\nabla \pi:\
\phi \in C_0^\infty (\overline{{\mathbb R}^n_+} \times
[0,\infty))\mbox{ with }  \mbox{div } \phi(\cdot, t) =0,\ \phi|_{x_n=0}=0
 \}
$ is dense in
$L^{(q^*)'} $, we conclude that
$u_1-u_2=0$ a.e. in ${\mathbb R}^n_+\times (0,\infty)$. Therefore, the
uniqueness of the solution of the Stokes system \eqref{maineq-stokes} holds
in the  class $ L^{p^*} $.

\section{Proof of  Theorem \ref{thm-navier}}

\label{nonlinear}
\setcounter{equation}{0}

In this section, we prove Theorem \ref{thm-navier} by  constructing approximate velocities and 
deriving  their uniform convergence in  $ \dot B^{\al,\frac\al 2 }_{p,q } \cap \dot B^{1+\frac{n+2}p, \frac12 + \frac{n+2}{2p}}_{p,1} \cap \dot W^{1,\frac12}_{\frac{n+2}2}$.

\subsection{Approximating solutions}

Let $u^0 =0$ and $p^0=0$.   After obtaining $(u^1,p^1),\cdots, (u^m,p^m)$ construct $(u^{m+1}, p^{m+1})$ which satisfies the following equations
\begin{align}
\label{maineq5}
\begin{array}{l}\vspace{2mm}
u^{m+1}_t - \De u^{m+1} + \na p^{m+1} ={\rm div} \, f^m, \qquad \mbox{div } u^{m+1} =0, \mbox{ in }
 \R_+ \times (0,\infty),\\
\hspace{30mm}u^{m+1}|_{t=0}= u_0, \quad u^{m+1}|_{x_n =0} =g,
\end{array}
\end{align}
where $f^m= \si(Du^m) D u^m - u^m\otimes u^m$ for $m \geq 1$.

From  Theorem \ref{thm-stokes}, we have
\begin{align}\label{u1}
\begin{split}
 &\| u^1\|_{\dot W^{1, \frac12}_{\frac{n+2}2}}  \leq c   M_{01},\quad 
\| u^1\|_{\dot B^{1 +\frac{n+2}p, \frac12 + \frac{n+2}{2p} }_{p,1}}  \leq c  M_{02}, \quad 
 \| u^1\|_{ \dot B^{\al,\frac\al 2 }_{p,q}}  \leq c    M_{03},
\end{split}
\end{align}
where 
\begin{align*}
M_{01}:  & = \|u_0\|_{  \dot   B^{1-\frac{4}{n+2}}_{\frac{n+2}2, \frac{n+2}2}({\mathbb
R}^{n}_+)} + \| g\|_{\dot B^{ 1-\frac{2}{n+2},\frac{1}2-\frac{1}{n+2}}_{\frac{n+2}2 , \frac{n+2}2}({\mathbb R}^{n-1} \times (0, \infty)) } + \| g_n\|_{A^{1-\frac2{n+2}, \frac12 -\frac1{n+2} }_{\frac{n+2}2,\frac{n+2}2} ( \Rn \times (0, \infty) ) },   \\
M_{02}:  & =  \| u_0\|_{\dot B^{1 +\frac{n}p }_{p,1} (\R_+) } + \| g\|_{ \dot  B^{ 1 + \frac{n+1}p,\frac12 +\frac{n +1 }{2p}}_{p,1}({\mathbb R}^{n-1} \times (0, \infty)) }     + \| g_n \|_{ \dot  A^{ 1 +\frac{n+1}p, \frac12 +\frac{n+1}{2p}}_{p,1} ( \Rn \times (0, \infty)  )  },\\
M_{03}:  & =  \|u_0\|_{  \dot   B^{\al-\frac{2}{p}}_{p,q}({\mathbb
R}^{n}_+)} + \| g\|_{ \dot B^{ \al-\frac{1}{p},\frac{\al} 2-\frac{1}{2p}}_{p,q}({\mathbb R}^{n-1} \times (0, \infty))  }  + \| g_n \|_{\dot A^{\al -\frac1p, \frac{\al}2 -\frac1{2p} }_{p,q} ( \Rn \times (0, \infty))  }.
\end{align*}


Applying  Theorem \ref{thm-stokes},  we have 
\begin{align}
\label{0527-1-1}
  \|  u^{m+1} \|_{ \dot W^{1, \frac12}_{\frac{n+2}2} } & \leq c \big(M_{01}+       \|u^m \otimes u^m\|_{ L^{\frac{n+2}2} }  +   \| \si (Du^m) D u^m\|_{ L^{\frac{n+2}2} } \big),\\
\label{eq1109-2}  \|  u^{m+1} \|_{  \dot B^{1 + \frac{n+2}p, \frac12 + \frac{n+2}{2p}}_{p,1}} & \leq c \big(  M_{02}
      +       \|u^m \otimes u^m\|_{  \dot B^{\frac{n+2}p, \frac{n+2}{2p} }_{p,1}}  +   \| \si (Du^m) D u^m\|_{ \dot B^{\frac{n+2}p,\frac{n+2}{2p} }_{p,1}} \big),\\
 \label{0624-1} \|  u^{m+1} \|_{  \dot B^{\al,\frac{\al}2}_{p,q}} & \leq c \big(M_{03}+       \|u^m \otimes u^m\|_{  \dot B^{\al -1,\frac{\al}2 -\frac12}_{p,q} }  +   \| \si (Du^m) D u^m\|_{ \dot B^{\al -1,\frac{\al}2 -\frac12}_{p,q}} \big).
\end{align}

From H$\ddot{\rm o}$lder inequality and (2) in Proposition \ref{prop0215},  
\begin{align}\label{0601-5}
 \| u^m \otimes u^m\|_{L^{\frac{n+2}2}  } & \leq c \| u^m \|^2_{L^{ n+2}  } \leq c \| u^m \|^2_{\dot H^{1, \frac12}_{\frac{n+2}2}  }.
\end{align}

From the  definition of anisotropic Besov space, we get $\|  D u^m\|_{L^\infty} \leq c \| u^m \|_{\dot B^{1 + \frac{n+2}p,  \frac12 + \frac{n+2}{2p}}_{p,1}}$.  Note that $|\si (Du^m)| = |{\mathbb F}(Du^m)  - {\mathbb F}(0) |\leq c |Du^m|$. Then, from Lemma \ref{lemma0716},   if $\| u^m   \|_{\L^\infty }  \leq 1$,   we have
\begin{align}\label{0601-6}
\begin{split}
 \| \si (Du^m) D u^m \|_{L^{\frac{n+2}2}  } \leq    c\|  D u^m \|_{L^\infty  } \|  D u^m \|_{L^{\frac{n+2}2}  }
 \leq     c\|  D u^m \|_{\dot B^{1 +\frac{n+2}p, \frac12 + \frac{n+2}{2p}  }_{p,1}} \|   u^m \|_{\dot W^{1,\frac12}_{\frac{n+2}2}  }.
\end{split}
\end{align}
From \eqref{0527-1-1},  \eqref{0601-5} and \eqref{0601-6}, 
\begin{align}\label{0601-7}
    \|   u^{m+1} \|_{\dot W^{1,\frac12}_{\frac{n+2}2}  }  \leq c \big(  M_{01}+       \|   u^m \|^2_{\dot W^{1,\frac12}_{\frac{n+2}2}  }   +\|   u^m \|_{\dot B^{1 +\frac{n+2}p, \frac12 + \frac{n+2}{2p}  }_{p,1}} \|   u^m \|_{\dot W^{1,\frac12}_{\frac{n+2}2}  }\big).
\end{align}

From  Lemma \ref{0510prop} and Lemma \ref{lemma0601},   we have 
\begin{align} \label{eq1109-1}
\begin{split}
 \|u^m \otimes u^m\|_{ \dot B^{\frac{n+2}p,\frac{n+2}{2p} }_{p,1} } &\leq c \| u^m \|_{L^\infty} \|u^m \|_{\dot B^{\frac{n+2}p,\frac{n+2}{2p} }_{p,1}  }\leq c  
\| u^m \|^2_{\dot B^{ \frac{n+2}p,  \frac{n+2}{2p}}_{p,1} }\\
& \leq c \|   u^m \|_{\dot W^{1,\frac12}_{\frac{n+2}2}}^{\frac{2\te (1-\eta)}{\te +\eta -\te \eta}}
\|   u^m \|^{\frac{2\eta}{\te +\eta -\te \eta}}_{\dot B^{1 + \frac{n+2}p, \frac12 + \frac{n+2}{2p}}_{p,1} }\\
& \leq c \Big( \|   u^m \|_{\dot W^{1,\frac12}_{\frac{n+2}2}}^2 + 
\|   u^m \|^2_{\dot B^{1 + \frac{n+2}p, \frac12 + \frac{n+2}{2p}}_{p,1} } \Big).
\end{split}
\end{align}
From Lemma \ref{lemma0716},  if $ \| D u^m   \|_{L^\infty}   \leq 1$,  then
\begin{align} \label{eq1109-3}
 \| \si (Du^m) D u^m \|_{\dot B^{\frac{n+2}p,\frac{n+2}{2p} }_{p,1} } & \leq  c  \| D u^m \|_{L^\infty} \| D u^m \|_{\dot B^{\frac{n+2}p,\frac{n+2}{2p}}_{p,1}(\R_+ \times (0, \infty ))}
\leq  c   \|  u^m  \|^2_{\dot B^{1 + \frac{n+2}p, \frac12 +  \frac{n+2}{2p}}_{p,1} }.
\end{align}
From \eqref{eq1109-2},    \eqref{eq1109-1} and \eqref{eq1109-3}, we have   
\begin{align}\label{230519-7}
\| u^{m+1} \|_{  \dot B^{1 + \frac{n+2}p, \frac12 + \frac{n+2}{2p}}_{p,1}} \leq c \big( M_{02} + \|   u^m \|_{\dot W^{1,\frac12}_{\frac{n+2}2}}^2 + 
\|   u^m \|^2_{\dot B^{1 + \frac{n+2}p, \frac12 + \frac{n+2}{2p}}_{p,1} }  \big).
\end{align}
From (1) of Proposition \ref{prop2}, we have 
 $  \dot B^{\al-1, \frac{\al-1}{2}}_{p,q} = (\dot B^{ \frac{n+2}p,  \frac{n+2}{2p}}_{p,q} , \dot B^{\al, \frac{\al}{2}}_{p,q})_{\eta_1, q}$ for $\eta_1 = \frac{\al -1 -\frac{n+2}p}{\al -\frac{n+2}p}  $  and 
$ \dot B^{ \frac{n+2}p,  \frac{n+2}{2p}}_{p,1} \subset \dot B^{ \frac{n+2}p,  \frac{n+2}{2p}}_{p,q}$.  Then, from   Lemma \ref{lemma0601}, we have 
\begin{align*}
\| u^m \|_{\dot B^{\al-1, \frac{\al-1}{2}}_{p,q} } \leq c \| u^m \|^{1 -\eta_1}_{\dot B^{ \frac{n+2}p,  \frac{n+2}{2p}}_{p,1} }\| u^m \|^{\eta_1}_{\dot B^{\al, \frac{\al}{2}}_{p,q} } \leq c \big( \| u^m \|_{\dot B^{ \frac{n+2}p,  \frac{n+2}{2p}}_{p,1} } + 
\| u^m \|_{\dot B^{\al, \frac{\al}{2}}_{p,q} } \big)\\
\leq c \big(\|   u^m \|_{\dot W^{1,\frac12}_{\frac{n+2}2}}+
\|   u^m \|_{\dot B^{1 + \frac{n+2}p, \frac12 + \frac{n+2}{2p}}_{p,1} } + \| u^m \|_{\dot B^{\al, \frac{\al}{2}}_{p,q} }   \big).
\end{align*}
Hence, from  Lemma \ref{0510prop} and Lemma \ref{lemma0601},     we have 
\begin{align}\label{0624-4}
\begin{split}
  \|  u^{m} \otimes u^m \|_{  \dot B^{\al-1,\frac{\al-1}2}_{p,q}}  &\leq c \| u^m \|_{L^\infty}\| u^m \|_{\dot B^{\al-1, \frac{\al-1}{2}}_{p,q} } \leq c  \| u^m \|_{\dot B^{ \frac{n+2}p,  \frac{n+2}{2p}}_{p,1} }   \| u^m \|_{B^{\al-1, \frac{\al-1}{2}}_{p,q} }\\
&   \leq   c \big(\|   u^m \|^2_{\dot W^{1,\frac12}_{\frac{n+2}2}}+
\|   u^m \|^2_{\dot B^{1 + \frac{n+2}p, \frac12 + \frac{n+2}{2p}}_{p,1} } + \| u^m \|^2_{\dot B^{\al, \frac{\al}{2}}_{p,q} }   \big)
\end{split}
\end{align}
and since $\| Du^m \|_{L^\infty} \leq c  \| u^m   \|_{\dot B^{1 + \frac{n+2}p, \frac12 + \frac{n+2}{2p}}_{p,1}} $ from Lemma \ref{lemma0716}, if $ \| u^m   \|_{L^\infty}   \leq  1$, then  we have
\begin{align}\label{0624-2}
 \| \si (Du^m) D u^m \|_{\dot B^{\al -1,\frac{\al -1}2 }_{p,q} } & \leq  c   \| D u^m \|_{L^\infty}  \| D u^m \|_{\dot B^{\al -1,\frac{\al -1}2 }_{p,q} }
\leq  c   \|  u^m  \|_{ \dot B^{1 +\frac{n+2}2, \frac12 +\frac{n+2}{2p}}_{p,1}}\|  u^m  \|_{\dot B^{\al, \frac{\al}2}_{p,q } }.
\end{align}
 
From \eqref{0624-1}, \eqref{0624-4},  and \eqref{0624-2},  
\begin{align}\label{0521-2}
  \|  u^{m+1} \|_{  \dot B^{\al,\frac{\al}2}_{p,q} } & \leq c \big( M_{03} + \|   u^m \|^2_{\dot W^{1,\frac12}_{\frac{n+2}2}}+
\|   u^m \|^2_{\dot B^{1 + \frac{n+2}p, \frac12 + \frac{n+2}{2p}}_{p,1} } + \| u^m \|^2_{\dot B^{\al, \frac{\al}{2}}_{p,q} } \big).
\end{align}

We take   $\de_0 $  satisfying  $\de_0  < \min( \frac1{ 5 c   },    1)$. Let  
\begin{align*} 
 cM_{01},\quad 
   cM_{02}, \quad cM_{03} <  \frac13 \de_0.
\end{align*}
From \eqref{u1},   we have 
\begin{align*} 
 \| u^1\|_{\dot W^{1, \frac12}_{\frac{n+2}2}}, \,\, 
\| u^1\|_{\dot B^{1 +\frac{n+2}p, \frac12 + \frac{n+2}{2p} }_{p,1}}, \,\,  
\| u^1\|_{ \dot B^{\al,\frac\al 2 }_{p,q}} &   < \frac13 \de_0 < \de_0.
\end{align*}

Suppose that 
\begin{align*} 
 \| u^m\|_{\dot W^{1, \frac12}_{\frac{n+2}2}}, \quad  
\| u^m\|_{\dot B^{1 +\frac{n+2}p, \frac12 + \frac{n+2}{2p} }_{p,1}}, \quad 
\| u^m\|_{ \dot B^{\al,\frac\al 2 }_{p,q}}  < \de_0.
\end{align*}
  
Accordingly, from    \eqref{0601-7}, \eqref{230519-7}, and \eqref{0521-2},  
\begin{align*} 
    \|   u^{m+1} \|_{\dot W^{1,\frac12}_{\frac{n+2}2}  }  &  \leq c \big(  M_{01}  +       \|   u^m \|^2_{\dot W^{1,\frac12}_{\frac{n+2}2}  }   +\|   u^m \|_{\dot B^{1 +\frac{n+2}p, \frac12 + \frac{n+2}{2p}  }_{p,1}} \|   u^m \|_{\dot W^{1,\frac12}_{\frac{n+2}2} }\big)\\
& \leq  \big(  \frac13 \de_0   +      c\de_0^2  + c \de_0^2  \big) <\de_0,
\end{align*}   

\begin{align*}
\| u^{m+1} \|_{  \dot B^{1 + \frac{n+2}p, \frac12 + \frac{n+2}{2p}}_{p,1}}  & \leq c \big( M_{02} + \|   u^m \|_{\dot W^{1,\frac12}_{\frac{n+2}2}}^2 + 
\|   u^m \|^2_{\dot B^{1 + \frac{n+2}p, \frac12 + \frac{n+2}{2p}}_{p,1} }  \big)\\
& \leq  \big( \frac13 \de_0  + c\de_0^2    +  c \de_0^2  \big) < \de_0,
\end{align*}
   
\begin{align*} 
  \|  u^{m+1} \|_{  \dot B^{\al,\frac{\al}2}_{p,q} } &      \leq c \big( M_{03} + \|   u^m \|^2_{\dot W^{1,\frac12}_{\frac{n+2}2}}+
\|   u^m \|^2_{\dot B^{1 + \frac{n+2}p, \frac12 + \frac{n+2}{2p}}_{p,1} } + \| u^m \|^2_{\dot B^{\al, \frac{\al}{2}}_{p,q} } \big)\\
& \leq  \big( \frac13 \de_0  +c\de_0^2  +c \de_0^2  \big)  < \de_0.
\end{align*}

Hence, for all $m \geq 1$,  
\begin{align} \label{230524-1}
    \|   u^{m} \|_{\dot W^{1,\frac12}_{\frac{n+2}2}  }, \quad \| u^{m} \|_{  \dot B^{1 + \frac{n+2}p, \frac12 + \frac{n+2}{2p}}_{p,1}}  &  ,\quad 
  \|  u^{m} \|_{  \dot B^{\al,\frac{\al}2}_{p,q} }    <\de_0.
\end{align}

\subsection{Uniform convergence}\label{0714-1}

Let $U^m=u^{m+1}-u^m$ and $P^m=p^{m+1}-p^m$.
Accordingly, $(U^m,P^m)$ satisfies the following equations
\begin{align*}
U^m_t - \De U^m + \na P^m &  =-\mbox{div}(u^m\otimes u^{m}-u^{m-1}\otimes u^{m-1})\\
 & \qquad \  -\mbox{div}(  \si (D u^m) Du^{m} -  \si (D u^{m-1}) D u^{m-1}),\\
  \hspace{30mm} \mbox{div } U^{m}& =0, \mbox{ in }
 \R_+ \times (0,\infty),\\
\hspace{30mm}U^{m}|_{t=0} & = 0, \quad U^m|_{x_n =0} =0.
\end{align*}


Note that 
\begin{align*}
 u^m \otimes u^m- u^{m-1} \otimes u^{m-1} &  = u^m \otimes U^{m-1} + U^{m-1}\otimes u^{m-1},\\
 \si (Du^m) D u^m - \si (Du^{m-1}) D u^{m-1} &  = \si (D u^m) DU^{m-1} + \big( \si (D u^m) -\si (D u^{m-1}) \big) D u^{m-1}.
\end{align*}

From H$\ddot{\rm o}$lder inequality,  (2) in Proposition \ref{prop0215}, we have
\begin{align*}
 \| u^m\otimes u^{m}-u^{m-1}\otimes u^{m-1} \|_{  L^{\frac{n+2}2} } 
 & \leq c \Big(\| u^m\|_{L^{n+2}  } \| U^{m-1}\|_{  L^{n+2}   } + \| u^{m-1}\|_{ L^{n+2}   } \| U^{m-1}\|_{ L^{n+2}   } \Big)\\
& \leq c \Big( \| u^{m}\|_{ \dot W^{1, \frac12}_{\frac{n+2}2}   } + \| u^{m-1}\|_{ \dot W^{1, \frac12}_{\frac{n+2}2}   }  \Big)     \| U^{m-1}\|_{ \dot W^{1, \frac12}_{\frac{n+2}2}   }.
\end{align*}
Note that since $\si (0) =0$, by mean-value theorem,  we have  $ | \si (Du^m)| \leq  \| D \si \|_{L^\infty (0,  \| D u^m\|_{L^\infty})}  | D u^m|$, and $| \si (D u^m) - \si (D u^{m-1})| \leq  \| D \si \|_{L^\infty (0,  \max ( \| D u^m\|_{L^\infty}, \| D u^{m-1}\|_{L^\infty}    ))}  | D u^m - D u^{m-1}|$. Hence, we have 
\begin{align*}
\| \si (D u^m) Du^{m} -  \si (D u^{m-1}) D u^{m-1}\|_{L^{\frac{n+2}2}  }
&\leq c  \big(\| D u^m\|_{L^\infty}  + \| D u^{m-1}\|_{L^\infty}   \big) \|  U^{m-1}\|_{ L^{n+2} } \\
&   \leq c  \big(\|  u^m\|_{\dot B^{1 + \frac{n+2}p, \frac12 + \frac{n+2}{2p}}_{p,1}  }  + \|   u^{m-1}\|_{\dot B^{1 + \frac{n+2}p, \frac12 + \frac{n+2}{2p}}_{p,1}  }   \big) \|  U^{m-1}\|_{ \dot W^{1, \frac12}_{\frac{n+2}2 } }.
\end{align*}


From   Lemma \ref{lemma0601}, we have 
\begin{align}
\label{240717-1}
\| u^m \|_{\dot B^{\frac{n+2}p, \frac{n+2}{2p}}_{p,1}  } &  \leq c \| u^m \|^{\frac{\te (1 -\eta)}{\te + \eta -\te \eta}}_{L^{n+2} }   
\| u^m \|^{\frac{\eta}{\te + \eta -\te \eta}}_{\dot B^{1 + \frac{n+2}p, \frac12 + \frac{n+2}{2p}}_{p,1}  }
\leq c  \Big( \| u^m \|_{\dot W^{1, \frac12}_{\frac{n+2}2} }    + 
\| u^m \|_{\dot B^{1 + \frac{n+2}p, \frac12 + \frac{n+2}{2p}}_{p,1}  } \Big),\\
\label{240717-3}
\| u^m\|_{L^\infty} & \leq c \| u^m \|_{\dot B^{\frac{n+2}p, \frac{n+2}{2p}}_{p,1}  } 
\leq c  \Big( \| u^m \|_{\dot W^{1, \frac12}_{\frac{n+2}2} }    + 
\| u^m \|_{\dot B^{1 + \frac{n+2}p, \frac12 + \frac{n+2}{2p}}_{p,1}  } \Big),\\
 \begin{split} \label{240717-2}
 \| u^m \|_{\dot B^{\al-1, \frac{\al-1}{2}}_{p,q} } &\leq c \| u^m \|^{1 -\eta_1}_{\dot B^{ \frac{n+2}p,  \frac{n+2}{2p}}_{p,q} }\| u^m \|^{\eta_1}_{\dot B^{\al, \frac{\al}{2}}_{p,q} } \leq c \Big( \| u^m \|_{\dot B^{ \frac{n+2}p,  \frac{n+2}{2p}}_{p,q} } + 
\| u^m \|_{\dot B^{\al, \frac{\al}{2}}_{p,q} } \Big)\\
& \leq   c\Big(\| u^m \|_{\dot W^{1, \frac12}_{\frac{n+2}2} }    + 
\| u^m \|_{\dot B^{1 + \frac{n+2}p, \frac12 + \frac{n+2}{2p}}_{p,1}  } + 
\| u^m \|_{\dot B^{\al, \frac{\al}{2}}_{p,q} } \Big).
\end{split}
\end{align}
For the first inequality of the third equation, we used the fact $ \dot B^{ \frac{n+2}p,  \frac{n+2}{2p}}_{p,1} \subset \dot B^{ \frac{n+2}p,  \frac{n+2}{2p}}_{p,q}$.
 
From  \eqref{240717-1},  \eqref{240717-3} and \eqref{240717-2},     we have 
\begin{align} \label{eq1109-1-1}
\begin{split}
 \|u^m\otimes U^{m-1}  \|_{ \dot B^{\frac{n+2}p,\frac{n+2}{2p} }_{p,1} } &\leq c \Big(  \| u^m \|_{L^\infty} \|U^{m-1} \|_{\dot B^{\frac{n+2}p,\frac{n+2}{2p} }_{p,1}  } + \| U^{m-1} \|_{L^\infty} \|u^m \|_{\dot B^{\frac{n+2}p,\frac{n+2}{2p} }_{p,1}  } \Big)\\
& \leq c \Big(  
\|   u^m \|_{\dot W^{1,\frac12}_{\frac{n+2}2}}  + 
\|   u^m \|_{\dot B^{1 + \frac{n+2}p, \frac12 + \frac{n+2}{2p}}_{p,1} } \Big)
\Big(  
\|   U^{m-1} \|_{\dot W^{1,\frac12}_{\frac{n+2}2}}  + 
\|   U^{m-1} \|_{\dot B^{1 + \frac{n+2}p, \frac12 + \frac{n+2}{2p}}_{p,1} } \Big)\\
& \leq c  A_m  B_{m-1},
\end{split}
\end{align}
where 
\begin{align*}
A_m&  = \| u^{m} \|_{\dot W^{1,  \frac12}_{\frac{n+2}2} } +  \| u^{m} \|_{\dot B^{1 +  \frac{n+2}p,  \frac12 + \frac{n+2}{2p}}_{p,1} } + \| u^{m} \|_{\dot B^{\al, \frac{\al}{2}}_{p,q} }, \\
B_m&  = \| U^{m} \|_{\dot W^{1,  \frac12}_{\frac{n+2}2} } +  \| U^{m} \|_{\dot B^{1 +  \frac{n+2}p,  \frac12 + \frac{n+2}{2p}}_{p,1} } + \| U^{m} \|_{\dot B^{\al, \frac{\al}{2}}_{p,q} }.
\end{align*}

\begin{align}
\begin{split}
  \|  u^{m} \otimes U^{m-1} \|_{  \dot B^{\al-1,\frac{\al-1}2}_{p,q}}  
  &\leq c \Big(\| u^m \|_{L^\infty}\| U^{m-1} \|_{\dot B^{\al-1, \frac{\al-1}{2}}_{p,q} }  + \| U^{m-1} \|_{L^\infty}\| u^m \|_{\dot B^{\al-1, \frac{\al-1}{2}}_{p,q} } \Big)\\
&\leq  c A_m  B_{m-1}.
\end{split}
\end{align}
\begin{align} \label{eq1109-3-3}
\begin{split}
 \| \si (Du^m) D U^{m-1} \|_{\dot B^{\frac{n+2}p,\frac{n+2}{2p} }_{p,1} } & \leq  c \Big(  \| \si (D u^m)\|_{L^\infty}  \| D U^{m-1} \|_{\dot B^{\frac{n+2}p,\frac{n+2}{2p}}_{p,1} } +  \| \si (D u^m)\|_{ \dot B^{\frac{n+2}p,\frac{n+2}{2p}}_{p,1} }  \| D U^{m-1} \|_{ L^\infty} \Big)\\
 & \leq  c \|   u^m\|_{ \dot B^{ 1 + \frac{n+2}p, \frac12 + \frac{n+2}{2p}}_{p,1} } \|  U^{m-1} \|_{\dot B^{1 + \frac{n+2}p, \frac12 + \frac{n+2}{2p}}_{p,1} }\\
 & \leq c  A_m  B_{m-1}.
 \end{split}
\end{align}
\begin{align}\label{0624-2-2}
\begin{split}
 \| \si (Du^m) D U^{m-1} \|_{\dot B^{\al -1,\frac{\al -1}2 }_{p,q} } & \leq  c  \| \si (Du^{m-1})  \|_{L^\infty } \| D U^{m-1} \|_{\dot B^{\al -1,\frac{\al -1}2 }_{p,q} } + \| \si (Du^m)  \|_{\dot B^{\al -1,\frac{\al -1}2 }_{p,q}  } \| D U^{m-1} \|_{ L^\infty }\\
& \leq  c  A_m  B_{m-1}.
\end{split}
\end{align}
Finally, 
\begin{align}
\begin{split}
& \| \big( \si (Du^m) - \si (Du^{m-1}\big) D u^m  \|_{\dot B^{\al -1,\frac{\al -1}2 }_{p,q} }\\
&   \leq  c  \| \big( \si (Du^m) - \si (Du^{m-1}\big)  \|_{L^\infty } \| D u^m \|_{\dot B^{\al -1,\frac{\al -1}2 }_{p,q} } + \|\big( \si (Du^m) - \si (Du^{m-1}\big)  \|_{\dot B^{\al -1,\frac{\al -1}2 }_{p,q}  } \| D u^m \|_{ L^\infty }.
\end{split}
\end{align}
The first term is dominated by 
\begin{align}
\|  DU^{m-1}  \|_{L^\infty } \| u^m \|_{\dot B^{\al,\frac{\al}2 }_{p,q} }  \leq  c \|  U^{m-1}  \|_{\dot B_{p,q}^{1 +\frac{n+2}p, \frac12 +\frac{n+2}{2p} }} \| u^m \|_{\dot B^{\al,\frac{\al}2 }_{p,q} } \leq  cA_m  B_{m-1}.
\end{align}
The second term is dominated by 
\begin{align}
\begin{split}
&  \Big( \| D U^{m-1}\|_{\dot B^{\al-1, \frac{\al-1}2}_{p,q}  } + \big( \|  D u^m\|_{\dot B^{\al -1, \frac{\al-1}2}_{p,q}  } +  \| D u^{m-1}\|_{\dot B^{\al -1, \frac{\al-1}2}_{p,q} }    \big) \big(  \| D U^{m-1}\|_{L^\infty  }  \big) \Big) \| u^m \|_{\dot B_{p,q}^{1 +\frac{n+2}p, \frac12 +\frac{n+2}{2p} }}\\
&  \leq  c A_m \Big( 1+ \big(A_m + A_{m-1}    \big)   \Big)   B_{m-1}.
\end{split}
\end{align}
Hence, we have 
\begin{align}
& \| \big( \si (Du^m) - \si (Du^{m-1}\big) D u^m  \|_{\dot B^{\al -1,\frac{\al -1}2 }_{p,q} }  \leq    c A_m \Big( 1+ \big(A_m + A_{m-1}    \big)   \Big)   B_{m-1}.
\end{align}
Similarly, we have 
\begin{align}
 \| \big( \si (Du^m) - \si (Du^{m-1}\big) D u^m  \|_{\dot B^{ \frac{n+2}p,\frac{n+2}{2p} }_{p,1} }
   \leq  c    A_m \Big( 1+ \big(A_m + A_{m-1}    \big)   \Big)   B_{m-1}.
\end{align}

Summing all estimates,  from Theorem \ref{thm-stokes},  we have
\begin{align}\label{0625-1}
\begin{split}
\| U^m\|_{\dot W^{1 ,\frac12}_{\frac{n+2}2}  } & \leq c \big(    \|u^m \otimes u^m - u^{m-1} \otimes u^{m-1}\|_{ L^{\frac{n+2}2} }  +   \| \si (Du^m) D u^m - \si (Du^{m-1}) D u^{m-1}\|_{ L^{\frac{n+2}2} } \big)\\
& \leq c \Big( \| u^{m}\|_{ \dot W^{1, \frac12}_{\frac{n+2}2}   } + \| u^{m-1}\|_{ \dot W^{1, \frac12}_{\frac{n+2}2}   } + \| D u^m\|_{L^\infty} + \| D u^{m-1}\|_{L^\infty}   \Big)  \| U^{m-1}\|_{\dot W^{1, \frac12}_{\frac{n+2}2} }\\
& \leq c\Big( A_m + A_{m-1} \Big)  B_{m-1},
\end{split}
\end{align}
\begin{align}
\begin{split}
 \|  U^{m} \|_{  \dot B^{1 + \frac{n+2}p, \frac12 + \frac{n+2}{2p}}_{p,1}} & \leq c \big(          \|u^m \otimes u^m - u^{m-1} \otimes u^{m-1}\|_{  \dot B^{\frac{n+2}p, \frac{n+2}{2p} }_{p,1}}  +   \| \si (Du^m) D u^m - \si (Du^{m-1}) D u^{m-1}\|_{ \dot B^{\frac{n+2}p,\frac{n+2}{2p} }_{p,1}} \big)\\
 &  \leq c\Big(  A_m + A_{m-1} \Big) B_{m-1},
 \end{split}
\end{align}
\begin{align}
\begin{split}
 \|  U^{m} \|_{  \dot B^{\al,\frac{\al}2}_{p,q}} & \leq c \Big(  \|u^m \otimes u^m- u^{m-1} \otimes u^{m-1}\|_{  \dot B^{\al -1,\frac{\al-1}2 }_{p,q} }  +   \| \si (Du^m) D u^m - \si (Du^{m-1}) D u^{m-1}\|_{ \dot B^{\al -1,\frac{\al-1}2 }_{p,q}} \Big)\\
 & \leq c\Big(  A_m + A_{m-1} + A_m (A_m + A_{m-1})  \Big) B_{m-1}.
 \end{split}
\end{align}
Taking $c\de_0 <  \frac16 $, we obtain
\begin{align}
B_m  < \frac12 B_{m-1}.
\end{align}
This implies that $\{ u_m\}$ is Cauchy sequence in $\ \dot W^{1,  \frac12}_{\frac{n+2}2} \cap  \dot B^{1 +  \frac{n+2}p,  \frac12 + \frac{n+2}{2p}}_{p,1} \cap \dot B^{\al, \frac{\al}{2}}_{p,q} $. In particular, $\{ u_m\}$ is Cauchy sequence in $ \dot W^{1,  \frac12}_{\frac{n+2}2}   \cap \dot B^{\al, \frac{\al}{2}}_{p,q} $. Since $\dot W^{1,  \frac12}_{\frac{n+2}2}   \cap \dot B^{\al, \frac{\al}{2}}_{p,q} $ is complete (see Theorem \ref{theo0715-1}), there is $u \in  \dot W^{1,  \frac12}_{\frac{n+2}2}   \cap \dot B^{\al, \frac{\al}{2}}_{p,q} $ such that $u_m$ converges to $u $ in  $  \dot W^{1,  \frac12}_{\frac{n+2}2}   \cap \dot B^{\al, \frac{\al}{2}}_{p,q} $.

\subsection{Existence}

Let $u$ be the same one constructed in  the previous section. Since $u^m \ri u$ in $\dot W^{1 ,\frac12}_{\frac{n+2}2}  \cap \dot B^{\al, \frac{\al}{2}}_{p,q} $, and by \eqref{230524-1}, we have that $u|_{x_n =0} = g$ and 
\begin{align*}
 \| u \|_{\dot W^{1 ,\frac12}_{\frac{n+2}2}  }, \quad \|u\|_{ \dot B^{\al,\frac{\al}2}_{p} }  <\de_0.
\end{align*}
Moreover, since $ D u^m \ri Du$ in $L^{\frac{n+2}2}$ and $ \| D u^m\|_{L^\infty} \leq  c \| u^m \|_{\dot B^{1 +\frac{n+2}p, \frac12 +\frac{n+2}{2p}}_{p,1}} \leq c\de_0$ for all $m$   by \eqref{230524-1},  we have $ \| D u \|_{L^\infty} \leq c \de_0$.

In this section, we will show that $u$ satisfies weak formulation  \eqref{weaksolution-NS}, that is, $u$ is a weak solution of  \eqref{maineq2} with appropriate distribution $p$.
Let $\Phi\in C^\infty_{0}({\R_+} \times [0,\infty))$ with $\mbox{div }\Phi=0$.
Observe that
\begin{align*}
\int_{\R_+}u_0(x) \cdot \Phi(x) dx &=\int^\infty_0\int_{\R_+} -u^{m+1}\cdot \Phi_t+ \big( (u^m\otimes u^m): \nabla \Phi + S(D u^m)   \big): \nabla \Phi dxdt.
\end{align*}
Note that $u^m \ri u$ in $\dot W^{1 ,\frac12}_{\frac{n+2}2}  $ and so $u^m \ri u$ in  $ L^{n+2}  $. 

Let $ K $ be a compact subset of  $\R_+ \times (0, \infty)$. Note that  
\begin{align*}
 \| u^m \otimes u^m - u \otimes u \|_{L^1  (K)}
  &   \quad\leq
\| u^m\otimes ( u^m -u)  +( u^m - u )\otimes u \|_{L^1  (K)}\\
  & \quad \leq
c  \big( \| u^m\|_{L^2   (K)   }  + \| u\|_{L^2  (K)   }\big)  \|( u^m -u) \|_{L^2  (K)}\\
  & \quad \leq
c(K)  \big( \| u^m\|_{L^{n+2}     } +  \| u\|_{L^{n+2}    }  \big)   \|( u^m -u) \|_{L^{n+2}  }\\
  & \quad \leq
c(K) \de_0  \| u^m -u\|_{L^{n+2} },\\
\|  \si (D u^m) D u^m -  \si (D u) D u \|_{L^1  (K)}
 &  \quad\leq
c \big(\| Du \|_{L^\infty} + \| D u^m\|_{L^\infty}   \big) \|( Du^m -Du) \|_{L^1  (K)}\\
&  \quad\leq
c(K) \de_0 \|( Du^m -Du) \|_{L^{\frac{n+2}2}  }.
\end{align*}
This implies   $u^m  \otimes u^m$ and  $  S (D u^m)   $ converge to $ u\otimes u$ and $S(D u)   $ in $L^1 (K)$ for all bounded subset $K \subset \R_+\times (0, \infty)$, respectively. With $m$  sending to   infinity,   the identity is
\begin{align*}
\int_{\R}u_0(x) \cdot \Phi(x) dx &=\int^\infty_0\int_{\R_+} - u \cdot \Phi_t+ \big( (u\otimes u): \nabla \Phi + S(D u) \big) : \nabla \Phi dxdt.
\end{align*}
Therefore  $u$ is a weak solution of \eqref{maineq2}.  This completes the proof of the existence part of Theorem \ref{thm-navier}.

\subsection{Uniqueness }
Let $ u_1\in   \dot W^{1, \frac12}_{\frac{n+2}2} \, \cap  \,  u \in L^\infty(0, \infty, \dot W^1_\infty(\R_+)  )  $ be  another weak solution   of the system \eqref{maineq2} with pressure $p_1$ satisfying $\| u_1 \|_{\dot B^{1 + \frac{n+2}p, \frac12 + \frac{n+2}{2p}}_{p,1} } + \| D u_1\|_{L^\infty}  < \de_0$, where $\ep$ is chosen in Section \ref{0714-1}. Let $U = u - u_1$ and $P = p -p_1$. According,
 $(U, P)$ satisfies the equations
\begin{align*}
U_t - \De U + \na P& =-\mbox{div}(u\otimes u-u_1\otimes u_1)  -\mbox{div}(  \si(D u) Du  -\si( D  u_1) D u_1),\\
 {\rm div} \, U& =0,
 \mbox{ in }\R_+\times (0,\infty),\\
 U|_{t=0}= 0, &\quad U|_{x_n =0} =0.
\end{align*}

The estimate  \eqref{0625-1}  implies that
\begin{align*}
  \|  U \|_{ \dot W^{1, \frac12}_{\frac{n+2}2} }
   &  \leq c  \Big( \| u\|_{ \dot W^{1, \frac12}_{\frac{n+2}2}   } + \| u_1\|_{ \dot W^{1, \frac12}_{\frac{n+2}2}   } + \| D u\|_{L^\infty}  + \| D u_1\|_{L^\infty}   \Big) \| U\|_{ \dot W^{1, \frac12}_{\frac{n+2}2}  }\\
&  < c \de_0\| U \|_{ \dot W^{1, \frac12}_{\frac{n+2}2} }  <\frac12 \| U \|_{ \dot W^{1, \frac12}_{\frac{n+2}2} }.
 \end{align*}
This implies that $u \equiv u_1$ in $\R_+ \times (0, \infty)$. Thus, the proof of the uniqueness  of Theorem \ref{thm-navier} is successfully proved.

\appendix

\section{ Proof of (2) of Remark \ref{rem0712}}
\label{appendix240721}
\setcounter{equation}{0}

First, we show that ${\mathcal S}_0(\RN) = \{ f \in {\mathcal S}(\RN) \, | \, 0 \notin supp \, \hat f\}$ is dense in $W^{s, \frac{s}2}_p (\RN)$. If then, $  \sum_{-\infty< k < \infty} (  4\pi^2 |\xi| +2\pi i \tau)^\frac{s}2  \hat \phi(2^{-k} \xi, 2^{-2k}\tau) \hat{f}  =   (  4\pi^2 |\xi| +2\pi i \tau)^\frac{s}2  \hat{f}$ and so (2) of Remark \ref{rem0712} is proved.

To prove the claim, let us $ f \in \dot W^{s, \frac{s}2}_p (\RN)$ and $\ep > 0$. Let $ g \in L^p (\RN)$ be defined by $ {\mathcal F}(g)(\xi, \tau) = {\mathcal F}((4\pi^2 |\xi |^2 + 2\pi i \tau)^\frac{s}2 \hat f)(\xi, \tau)$.   Since ${\mathcal S}_0(\RN) $ is dense in $L^p (\RN)$ (see Lemma 3.6 in \cite{DHMT}), there is $ g_\ep \in {\mathcal S}_0(\RN) $ such that $\| g -g_\ep\|_{L^p (\RN)}  < \ep$.  Since $0 \notin supp\,\, \hat g_\ep $,   we get $ f_\ep: ={\mathcal F}^{-1} (4\pi^2 |\xi |^2 + 2\pi i \tau)^{-\frac{s}2} \hat g_\ep) \in {\mathcal S}_0 (\RN)$. Then, we have $ \| f_\ep - f\|_{\dot W^{s,\frac{s}2}_p (\RN)} = \| g - g_\ep \|_{L^p (\RN)} < \ep$. Hence, ${\mathcal S}_0 (\RN)$ is dense in $ W^{s, \frac{s}2}_p (\RN)$. We complete the proof of (2) of Remark \ref{rem0712}.

\section{}
\label{appendix0-0-0}
\setcounter{equation}{0}

In this section, we prove the following theorem. 

\begin{theo}\label{proofinR}
\begin{itemize}
\item[(1)]
For  $ 1 \leq p < \infty$ and $ k \in {\mathbb N}$,  let us
\begin{align}\label{0712-3}
\dot H^{k, \frac{k}2}_p (\RN ) = \{ f \in {\mathcal S}'  (\RN  )\, | \,  \| f\|_{\dot H^{k, \frac{k}2}_p (\RN )}  :  =\sum_{|\be| +  2l =k} \|D_x^\be D_t^l f\|_{L^p (\RN )} < \infty \}.
\end{align}
Then, $ \dot W^{k, \frac{k}2}_p (\RN ) = \dot H^{k, \frac{k}2}_p (\RN )$ with  $\|f\|_{ \dot W^{k, \frac{k}2}_p (\RN ) } =  \|f\|_{ \dot H^{k, \frac{k}2}_p (\RN ) }$.  Moreover, 
\begin{align}\label{0712-2}
\|f \|_{\dot W^{2k, k }_p (\RN ) } \approx   \sum_{k_1 + k_2 =k} \|\De^{k_1} D_t^{k_2} f\|_{L^p (\RN )}.
\end{align}
 
 \item[(2)]
Let $ 1 \leq p < n+2  $  and $0< s$ with $ p < \frac{n+2}s$.  Then,  $  \dot W^{s, \frac{s}2}_p(\RN ) \subset L^{\frac{p(n+2)}{n +2-ps}} (\RN ) $ with 
\begin{align*}
\| f\|_{L^{\frac{p(n+2)}{n +2 -ps}} (\RN )  } \leq c \| f \|_{\dot W^{s,\frac{s}2}_p (\RN )}.
\end{align*}

 \item[(3)]
 Let $ 1 \leq p < n +2 $, $ 1 \leq q  < \infty $ and $0< s$ with $ p < \frac{n+2 }s$. Then,  $\dot W^{s,\frac{s}2}_p (\RN )$ and $\dot B^{s,\frac{s}2}_{p,q} (\R \times {\mathbb R})$ are completion of $C^\infty_0(\RN )$. In particular,  $\dot W^{s,\frac{s}2}_p (\RN )$ and $\dot B^{s,\frac{s}2}_{p,q} (\R \times {\mathbb R})$ are complete.

\end{itemize}
\end{theo}

\begin{proof}
H. Bahouri, J. Y. Chemin and R. Danchin proved Theorem \ref{proofinR} for $ \dot H^{s }_2 (\R  ), \,\, s < \frac{n}2 $ (see 
   the comment  in  Proposition 1.32 below,  Theorem 1.38 and Proposition 1.34  in   \cite{danchin} ).

Since the proofs are similar, we only  prove (1) for $k =1$.   Let $ f \in {\mathcal S}' (\RN)$ such that $\sum _{ 1 \leq i \leq n} \| D_{x_i}  f\|_{L^p (\RN)} + \| D_t^\frac12 f\|_{L^p (\RN)} <\infty$. 
Then,  we have   
\begin{align*}
\begin{split}
 (  4\pi^2   |\xi|^2 +2\pi i \tau)^\frac12 \widehat{ f} 
 &  = -\sum_{1 \leq i \leq n}  \frac{ 2\pi i \xi_i}{ ( 4\pi^2  |\xi|^2 +2\pi i \tau)^\frac12} \widehat{ D_{x_i}   f }\\
 & \qquad +  2\pi i\frac{ \big(a_0 + i b_0 sign(\tau) \big)^{-1}sign(\tau) |\tau|^{-\frac12}  }{ (  4\pi^2 |\xi|^2 +2\pi i \tau)^\frac12}  \widehat{ D_t^\frac12   f }.
 \end{split}
\end{align*}
Note that by the Marcinkiewicz multiplier theorem (see Theorm 4. 6$'$  in \cite{St}), $\frac{\xi_i}{ (   |\xi|^2 +2\pi i \tau)^\frac12}$ and $\frac{\big(a_0 + i b_0 sign(\tau) \big)^{-1}sign(\tau) |\tau|^{-\frac12}  }{ (   |\xi|^2 +2\pi i \tau)^\frac12}$ are $L^p-$multipliers. Hence, we have 
\begin{align*}
\begin{split}
\| f\|_{\dot W^{1,\frac12}_p (\RN)} &  = \| {\mathcal F}^{-1} ((  4\pi^2   |\xi|^2 +2\pi i \tau)^\frac12  \hat f)\|_{L^p (\RN)}\\
& \leq c \big(\sum _{ 1 \leq i \leq n} \| D_{x_i}  f\|_{L^p (\RN )} + \| D_t^\frac12 f\|_{L^p (\RN)} \big).
\end{split}
\end{align*}
Hence, we have  $ \dot H^{1,\frac12}_p (\RN) \subset \dot W^{1,\frac12}_p (\RN)  $. 

Conversely, let $f \in \dot W^{1,\frac12}_p (\RN)$ so that $ f \in {\mathcal S}' (\RN)$ and   $\| {\mathcal F}^{-1} ((  4\pi^2   |\xi|^2 +2\pi i \tau)^\frac12  \hat f)\|_{L^p (\RN)} < \infty$.  Then, we have 
\begin{align*}
\widehat{ D_{x_i}  f }  = (2\pi i \xi_i ) \widehat{f} = - \frac{2\pi i \xi_i}{( 4\pi^2   |\xi|^2 +2\pi i \tau)^\frac12}(  4\pi^2 |\xi|^2 +2\pi i \tau)^\frac12 \hat{f},\\
\widehat{ D_t^\frac12  f }  = (2\pi i |\tau|^\frac12 ) \hat{f} =  \frac{\big(a_0 + i b_0 sign(\tau) \big) |\tau|^\frac12 }{(  4\pi^2 |\xi|^2 +2\pi i \tau)^\frac12}( 4\pi^2  |\xi|^2 +2\pi i \tau)^\frac12 \hat{f}.
\end{align*}
By the Marcinkisch multiplier theorem (see Theorm 4. 6$'$  in \cite{St}), $ \frac{\big(a_0 + i b_0 sign(\tau) \big) |\tau|^\frac12 }{(  4\pi^2 |\xi|^2 +2\pi i \tau)^\frac12}$ is $L^p-$ multiplier. Hence,  we have 
\begin{align*}
\| D_{x_i}  f  \|_{L^p (\RN)}, \,\, \| D_t^\frac12 f \|_{L^p (\RN)} \leq c \| f\|_{\dot W^{1,\frac12}_p (\RN)}.
\end{align*}
Hence, we have  $ \dot W^{1,\frac12}_p (\RN) \subset \dot H^{1,\frac12}_p (\RN) $.  We complete the proof of \eqref{0712-3}.

Next, 
\begin{align*}
\widehat{ D_{x_i}D_{x_j}  f }  = (2\pi i \xi_i ) \widehat{f} = - \frac{-4\pi^2 \xi_i \xi_j}{( 4\pi^2   |\xi|^2 +2\pi i \tau) }(  4\pi^2 |\xi|^2 +2\pi i \tau)  \hat{f}.
\end{align*}
This implies \eqref{0712-2}. We complete the proof of (1).

Now, we prove (2).  Let $ f \in \dot H^{2,1}_p (\RN)$ so that $ f_t -\De f \in L^p (\RN)$.  Note that  
$ f = \Ga* \big(f_t -\De f\big)  $, where $\Ga$ is the fundamental solution of heat equation in $\R$. It is well-known that 
\begin{align*}
\|\Ga* \big(f_t -\De f\big) \|_{L^{\frac{p (n+2)}{n+2 -2p}} (\RN)} \leq c \| f_t -\De f  \|_{L^p (\RN)}.
\end{align*}
Hence, we have \begin{align}\label{0712-1}
\| f\|_{L^{\frac{p (n+2)}{n+2 -2p}} (\RN)}\leq c\| f\|_{\dot H^{2,1}_p (\RN)}.
\end{align}

Let $ m \in {\mathbb N}$ such that $ 2m \leq n+2 < 2(m+1)$ and $ p < \frac{n+2}{2m}$. Then, we have 
\begin{align*}
\begin{split}
\| f\|_{L^{\frac{p (n+2)}{n+2 -2m p}} (\RN)}& \leq c\| f_t -\De f  \|_{L^{\frac{p (n+2)}{n+2 -2(m-1) p}} (\RN)}\\
&  \leq c\| D_t(D_t f -\De f) - \De(D_t f  -\De f)  \|_{L^{\frac{p (n+2)}{n+2 -2(m-2) p}} (\RN)}\\
&   \hspace{40mm} \cdots\\
&\leq c \| \sum_{k_1+  l = m} D_t^l \De^{k_1} f\|_{L^p (\RN)}\\
&\leq c \|  f\|_{\dot W^{2m, m}_p  (\RN)}.
\end{split}
\end{align*}

 Acting  complex  interpolation between $ L^{p_1} (\RN)$ and $ L^{\frac{(n+2)p}{n +2- 2mp}}(\RN)$ for $ 1 \leq p_1 < \infty$ and $1 \leq p < \frac{n +2}{2m}$,  we have 
 \begin{align*}
 \| f\|_{[L^{p_1} (\RN), L^{\frac{(n+2)p}{n +2 -2mp}}(\RN)]_\te} \leq c \| f \|_{[L^{p_1} (\RN), \dot W^{2m,m}_p (\RN)]_\te}.
  \end{align*} 
  Hence, we obtan (2) for $ 0  <  s \leq 2m$ and $1 \leq p \leq  \frac{n+2}{s}$.
  
 For the case $ 2m< s< n +2$, we use the followng calim:
$ f \in \dot W^{s,\frac{s}2}_p (\RN)$ if and only if $D^{2m}_x f, \,\, D_t^m f \in \dot W^{s -2m,\frac{s -2m}2}_p (\RN)$.

The claim is direct from the following equation.
\begin{align*}
(4\pi^2 |\xi|^2 + 2\pi  i \tau)^\frac{s -2m}2  \widehat{\De^m f}   = \frac{(-4\pi^2 |\xi|^2)^m}{(4\pi^2 |\xi|^2 + 2\pi i \tau)^m}  
(4\pi^2 |\xi|^2 + 2\pi i \tau)^\frac{s }2   \widehat{f},\\
(4\pi^2 |\xi|^2 + 2\pi  i \tau)^\frac{s -2m}2  \widehat{D_t^m f}   = \frac{(2\pi i \tau)^m}{(4\pi^2 |\xi|^2 + 2\pi i \tau)^m}  
(4\pi^2 |\xi|^2 + 2\pi i \tau)^\frac{s }2   \widehat{f}. 
\end{align*}

Let $ f \in \dot W^{s,\frac{s}2}_p (\RN)$ for  $  s <n +2$ and $1 \leq p < \frac{n+2}s$. From the claim, we have  the claim, we have  $\De f, \,\, D_t f  \in \dot W^{s -2,\frac{s -2}2}_p (\RN)$ with 
\begin{align*}
\| \De  f \|_{\dot W^{s-2,\frac{s -2}2}_p (\RN)}  + \| D_t  f \|_{\dot W^{s-2,\frac{s -2}2}_p (\RN)}\approx \|  f \|_{\dot W^{s,\frac{s}2}_p (\RN)}.
\end{align*}

From    \eqref{0712-1}, \eqref{0712-3},  (2) of Theorem \ref{proofinR} for $ 0 < s\leq 2m $ and the claim continuosly,     we have 
\begin{align*}
\begin{split}
\|  f \|_{L^{\frac{(n+2)p}{n+2 -sp} (\RN)}} &  \leq c\|  f \|_{\dot W^{2,1}_{\frac{p (n+2)}{n +2 -(s -2)p}} (\RN)} \leq c \| \De  f \|_{L^{\frac{p(n+2)}{n +2 -(s-2)p}} (\RN)}  + \| D_t  f \|_{L^{\frac{p(n+2)}{n +2 -(s-2)p}}(\RN)}\\
&  \leq c \| \De  f \|_{\dot W^{s-2,\frac{s -2}2}_p (\RN)}  + \| D_t  f \|_{\dot W^{s-2,\frac{s -2}2}_p (\RN)}\approx \|  f \|_{\dot W^{s,\frac{s}2}_p (\RN)}.
\end{split}
\end{align*}
This implies the (2) of Theorem \ref{proofinR} for $\dot H^{s,\frac{s}2}_p (\RN)$. 

For the proof of  (2) of Theorem \ref{proofinR} for $\dot B^{s,\frac{s}2}_p (\RN)$, we use the following real interpolation  
\begin{align}\label{240703-2}
( L^p (\R), \dot W^{s_1,\frac{s_1}2}_p (\RN)_{\te,q} = \dot B^{s,\frac{s}2}_{p,q} (\RN),
\end{align}
where  $ \te =1 -\frac{s}{s_1}$. 
We complete the proof of  (2) of Theorem \ref{proofinR}.

The completeness of $\dot W^{s,\frac{s}2}_p(\RN)$  is direct result from (2) of Theorem \ref{proofinR}.  
The completeness of $\dot B^{s,\frac{s}2}_{p,q} (\RN)$ is from \eqref{240703-2} and the property of interpolation spaces. We complete the proof of (3) of Thererem \ref{proofinR}. 
\end{proof}

\begin{theo}\label{theo0715-1}
Let  $1\leq  p< n+2$,  $ \frac{n+2}{n+\al} <  r < \infty$, $ 1  \leq q < \infty$ and  $  1 < \al <2 $. Then, $ \dot W^{1,\frac{1}2}_p(\RN) \cap \dot B^{\al,\frac{\al}2}_{r,q}(\RN)$ are complete.
\end{theo}
\begin{proof}
Note that     $f \in \dot B^{\al, \frac{\al}2}_{r,q}(\RN)$  if and only if $ {\mathcal F}^{-1}( (4\pi^2 |\xi|^2 +2\pi i \tau)  \widehat{f} ) \in \dot B^{\al -2, \frac{\al-2}2}_{r,q} (\RN)$ with $\| f\|_{\dot B^{\al, \frac{\al}2}_{r,q}(\RN)} \approx  \| {\mathcal F}^{-1}( (4\pi^2 |\xi|^2 +2\pi i \tau)  \widehat{f} )\|_{\dot B^{\al -2, \frac{\al-2}2}_{r,q} (\RN) }$.

Since $ \dot B^{\al -2, \frac{\al-2}2}_{r,q} (\RN)$ is dual space of $\dot B^{-\al +2, \frac{-\al+2}2}_{r',q'} (\RN)$ and  $\dot B^{-\al +2, \frac{-\al+2}2}_{r',q'} (\RN)$ is complete  by  Theorem \ref{proofinR}, $ \dot B^{\al -2, \frac{\al-2}2}_{r,q} (\RN)$ is complete.

Let $ \{ f_m \}$ be a Cauchy sequence in $ \dot W^{1,\frac{1}2}_p(\RN) \cap \dot B^{\al,\frac{\al}2}_{r,q}(\RN)$. Then,  $ \{ f_m \}$ be a Cauchy sequence in $ \dot W^{1,\frac{1}2}_p(\RN) $ and  $ \{{\mathcal F}^{-1}( (4\pi^2 |\xi|^2 +2\pi i \tau)  \widehat{f_m} ) \}$ be a Cauchy sequence in $  \dot B^{\al -2,\frac{\al -2}2}_{r,q}(\RN)$. Since  $ \dot W^{1,\frac{1}2}_p(\RN) $ and  $  \dot B^{\al -2,\frac{\al -2}2}_{r,q}(\RN)$ are complete, there are $f \in \dot W^{1,\frac{1}2}_p(\RN) $ and $ g \in \dot B^{\al -2,\frac{\al -2}2}_{r,q}(\RN)$ such that $ f_m$ converges to $f$ in  $f \in \dot W^{1,\frac{1}2}_p(\RN) $ and $ \{{\mathcal F}^{-1}( (4\pi^2 |\xi|^2 +2\pi i \tau)  \widehat{f_m} ) \}$ converges to $g$ in $ \dot B^{\al -2,\frac{\al -2}2}_{r,q}(\RN)$.

Let $\phi \in  {\mathcal S} (\RN)$. 
Note that $  {\mathcal F}^{-1}( (4\pi^2 |\xi|^2 +2\pi i \tau)  \widehat{\phi} \in {\mathcal S} (\RN)$ and  $< {\mathcal F}^{-1}( (4\pi^2 |\xi|^2 +2\pi i \tau)  \widehat{f_m} ), \phi> = <f_m,  {\mathcal F}^{-1}( (4\pi^2 |\xi|^2 +2\pi i \tau)  \widehat{\phi} )> $, where $<\cdot, \cdot>$ is dual pairing between ${\mathcal S}(\RN)$ and ${\mathcal S}(\RN)'$.
 Then, we have 
\begin{align}
\begin{split}
<  {\mathcal F}^{-1}( (4\pi^2 |\xi|^2 +2\pi i \tau)  \widehat{f_m} ),    \phi >   & \ri < g,  \phi>,\\
< f_m,    {\mathcal F}^{-1}( (4\pi^2 |\xi|^2 +2\pi i \tau)  \widehat{\phi} )>  & \ri <f,  {\mathcal F}^{-1}( (4\pi^2 |\xi|^2 +2\pi i \tau)  \widehat{\phi} )>\\
 &= <   {\mathcal F}^{-1}( (4\pi^2 |\xi|^2 +2\pi i \tau)\widehat{f} ), \phi>.
 \end{split}
\end{align}
This implies  $ g =    {\mathcal F}^{-1}( (4\pi^2 |\xi|^2 +2\pi i \tau)  \widehat{f} )$ and so  $ f =    {\mathcal F}^{-1}( (4\pi^2 |\xi|^2 +2\pi i \tau)^{-1} \widehat{g} )$. Hence, we have $ f \in  \dot W^{1,\frac{1}2}_p(\RN) \cap \dot B^{\al, \frac{\al}2}_{r,q} (\RN)$.   We comlete the proof of Theorem \ref{theo0715-1}.

\end{proof}

\section{Proof of Proposition \ref{proppoisson2}  }
\label{proofoflemma230518-0}
\setcounter{equation}{0} 
Note that $D_{x_i} N*' f = D_{x_n} N*' R'_i f, \,\, i = 1, \cdots, n-1$ and $D_{x_n} N  $ is Poisson kernel in $\R_+$, where $R'_i$ are $(n-1)$-diminsional Riesz transforms. Moreover, it is  a well known fact that  $D_{x_n} N$ is bounded from $\dot{B}^{-\frac{1}{p}}_{p, p}(\Rn)$ to $L^p(\R_+)$ so that $
  \|D_{x_n} *' f\|_{L^p(\R_+)}\leq c\|f\|_{\dot B^{-\frac{1}{p}}_{p, p}(\Rn)}$
 and $R'_i$ are bounded in $\dot B^{\be}_{p,p}(\Rn)$ to $\dot B^{\be}_{p,p} (\Rn), \,\, \be \in {\mathbb R}$  (See \cite{St}).
Accordingly, we have   
\begin{align*}
\|  N*' f\|_{L^p ( 0, \infty; \dot W^k_p (\R_+))} \leq c \|R'_i f\|_{L^p(0, \infty;\dot B^{k-\frac{1}{p}}_{p,p}(\Rn))} \leq c \|f\|_{L^p(0, \infty;\dot B^{k-\frac{1}{p}}_{p,p}(\Rn))}.
\end{align*}
Since $D_t^k N*' f = N*' D_t^k f$, we have 
\begin{align*}
\|  N*' f\|_{L^p (\R_+; \dot W^{k}_p (0, \infty))} \leq c \|  N*'  D^k_t f\|_{L^p(\R_+ \times (0, \infty))  } \leq c \| D^k_t f\|_{ L^p(0, \infty;\dot B^{-\frac{1}{p}}_{p,p}(\Rn))} \leq c \|f\|_{\dot W^{k}_p(0, \infty;\dot B^{-\frac{1}{p}}_{p,p}(\Rn))}.
\end{align*}
From (3) of Proposition \ref{prop0215}, this implies the first quantity of Proposition \ref{proppoisson2}.

The second quantity is from the property of real interpolation and the definition of space $\dot A^{\al, \frac{\al}2 }_{p,q} $. We complete the proof of  Proposition \ref{proppoisson2}.

\section{Proof of Proposition \ref{proppoisson0518}  }
\label{proofoflemma230518-1}
\setcounter{equation}{0}

Since the proofs are exactly same, we only prove in the case of $ \na^2  N* g$.  By the Caldr\'{o}n-Zygmund integral theorem and by the property of  real interpolation, 
\begin{align}\label{230528-1}
\|\na^2  N* g\|_{ L^p (0, \infty; \dot W^k_p  (\R_+))}&\leq c \|g\|_{ L^p (0, \infty;\dot W^{k} _p (\R_+))} \quad   k  \geq 0.
\end{align}
See Proposition 3.4 in \cite{CJ3}.

Let $\tilde g \in \dot W^{\frac{k}2}_p ({\mathbb R})$ be a extension of $g \in \dot W^{\frac{k}2}_p (0, \infty)$ such that $\| \tilde g\|_{\dot W^{\frac{k}2}_p ( {\mathbb R}) } \leq c \| g\|_{\dot W^{\frac{k}2}_p (0, \infty) } $. Accordingly
\begin{align}\label{230528-2}
\begin{split}
\|\na^2  N* g\|_{ L^p (\R_+;  \dot W^{\frac{k}2}_p  (0, \infty))} \leq c \|\na^2  N*\tilde  g\|_{ L^p (\R_+;  \dot W^{\frac{k}2}_p  ({\mathbb R}))}
= c \|\na^2  N*  D_t^{\frac{k}2} \tilde g\|_{ L^p (\R_+\times {\mathbb R})}\\
 \leq  c \| \tilde g\|_{L^p (\R_+;  \dot W^{\frac{k}2}_p  ({\mathbb R}))} \leq c \| g\|_{L^p(\R_+;  \dot W^{\frac{k}2}_p  (0,\infty))} .
 \end{split}
\end{align}
From \eqref{230528-1},  \eqref{230528-2} and  (3) of Proposition \ref{prop0215},  
\begin{align*}
\|\na^2  N* g\|_{ \dot W^{k,\frac{k}2} _p  } &\leq c \|g\|_{ \dot W^{k,\frac{k}2} _p  } \quad   k  \geq 0.
\end{align*}
Using the property of real interpolation ,  
\begin{align*}
\|\na^2 N* g\|_{ \dot B^{s, \frac{s}2}_{p,q} } &\leq c   \|g\|_{\dot B^{s, \frac{s}2}_{p,q} } \quad    s > 0.
\end{align*}
We complete the proof of Proposition \ref{proppoisson0518}.

\section{Proof of Lemma \ref{lemma0716}  }
\label{proofoflemma0716}
\setcounter{equation}{0}

Let $E : \dot B^{s,\frac{s}2}_{p,q} \ri \dot B^{s,\frac{s}2}_{p,q} (\RN)$ be an extension operator defined in Section  \ref{notation}.
Let  $ {\mathbb G} \in \dot B^{s,\frac{s}2}_{p,q} \cap L^\infty$ and $ \tilde{\mathbb G} = E {\mathbb G}$ such that  $ \| \tilde{\mathbb G} \|_{ \dot B^{s,\frac{s}2}_{p,q}(\RN) } \leq c \| {\mathbb G}\|_{ \dot B^{s,\frac{s}2}_{p,q}}$ and $ \| \tilde{\mathbb G} \|_{ L^\infty (\RN) } \leq c \| {\mathbb G}\|_{L^\infty}$.  Since $\si (\tilde {\mathbb G}) \tilde {\mathbb G}|_{\R_+ \times (0, \infty)} = \si (  {\mathbb G})   {\mathbb G}$, by he definition of  definition of homogeneous anisotropic Besov space  $\dot B^{s,\frac{s}2}_{p,q}$ and  from  (4) of Proposition \ref{prop2},   we have 
\begin{align*}
\| \si ({\mathbb G}){\mathbb G}\|_{\dot B^{s,\frac{s}2}_{p,q} } & \leq c\| \si (\tilde {\mathbb G}) \tilde {\mathbb G}\|_{\dot B^{s,\frac{s}2}_{p,q} (\RN) }\\
  & \leq c    \Big(   \int_{{\mathbb R}^{n +1}}  
\frac1{(|y| +|\tau|^\frac12)^{n+2  + ps  } } \\
& \qquad  \times  \big( \int_{{\mathbb R}^{n +1}}  |\si ( \tilde{\mathbb G}(x+y,t+\tau)) \tilde{\mathbb G}(x +y,t+\tau) - \si (\tilde{\mathbb G}(x,t)) \tilde{\mathbb G} (x,t)|^p  dxdt \big)^{\frac{q}p}  dyd\tau  \Big)^\frac1q.
\end{align*}

By mean-value theorem  $| \si (\tilde {\mathbb G}(x+y,t+\tau))| 
\leq \| D \si\|_{L^\infty(0, \| {\mathbb G}\|_{L^\infty(\R_+)})} | \leq  \tilde{\mathbb G}(x+y,t+\tau)|  $ and $| \si (\tilde {\mathbb G}(x+y,t+\tau))  - \si (\tilde{\mathbb G}(x,t))    | \leq c \| D \si\|_{L^\infty(0, \| {\mathbb G}\|_{L^\infty(\R_+)})}   |\tilde{\mathbb G}(x+y,t+\tau) - \tilde{\mathbb G}(x,t)|  $. Hence, we have 
\begin{align*}
&|\si (\tilde {\mathbb G}(x+y,t+\tau)) \tilde{\mathbb G}(x +y,t+\tau) - \si (\tilde{\mathbb G}(x+y,t +\tau)) \tilde{\mathbb G} (x,t)|\\
& \leq  |\si (\tilde {\mathbb G}(x+y,t+\tau)) \Big( \tilde{\mathbb G}(x +y,t+\tau) -   \tilde{\mathbb G} (x,t) \Big)  | + |\Big(\si (\tilde {\mathbb G}(x+y,t+\tau))  - \si (\tilde{\mathbb G}(x,t))  \Big)\tilde{\mathbb G} (x,t) |\\
& \leq c \| D \si\|_{L^\infty(0, \| {\mathbb G}\|_{L^\infty(\R_+)})}  \| {\mathbb G}\|_{L^\infty(\R_+)}    |\tilde {\mathbb G}(x+y,t +\tau) - \tilde{\mathbb G}(x,t)|.
\end{align*}

\begin{align*}
\| \si ({\mathbb G}){\mathbb G}\|_{\dot B^{s,\frac{s}2}_{p,q} }
& \leq c\| D \si\|_{L^\infty(0, \| {\mathbb G}\|_{L^\infty(\R_+)})}  \| {\mathbb G}\|_{L^\infty(\R_+)}\\
& \quad \times   \Big(   \int_{{\mathbb R}^{n+1}}  
\frac1{(|y| +|\tau|^\frac12)^{n+2  + ps  }}  \big( \int_{{\mathbb R}^{n+1}}  |  \tilde{\mathbb G}(x +y,t+\tau) -   \tilde{\mathbb G} (x,t)|^p  dxdt \big)^{\frac{q}p}  dyd\tau  \Big)^\frac1q\\
&\leq c\| D \si\|_{L^\infty(0, \| {\mathbb G}\|_{L^\infty(\R_+)})}  \| {\mathbb G}\|_{L^\infty(\R_+)} \| \tilde{\mathbb G}\|_{\dot B^{s, \frac{s}2}_{p,q} (\RN)}\\
&\leq c\| D \si\|_{L^\infty(0, \| {\mathbb G}\|_{L^\infty(\R_+)})}  \| {\mathbb G}\|_{L^\infty(\R_+)} \| {\mathbb G}\|_{\dot B^{s, \frac{s}2}_{p,q} }.
\end{align*}
We complete the proof of (1) of  Lemma \ref{lemma0716}.

Next, we prove (2).  By direct caculation, we have 
\begin{align*}
\begin{split}
&\si ( \tilde{\mathbb G}(x+y,t+\tau)) - \si ( \tilde{\mathbb H}(x +y,t+\tau) ) - \Big( \si ( \tilde{\mathbb G}(x,t)) - \si ( \tilde{\mathbb H}(x,t) )  \Big)\\
& = \int_0^1 D\si (\te \tilde{\mathbb G}(x+y,t+\tau) + (1 -\te) \tilde{\mathbb H}(x +y,t+\tau) ): \Big( {\mathbb G}(x+y, t+\tau) -  {\mathbb H}(x+y, t+\tau) \Big) d\te\\
& - \int_0^1 D\si (\te \tilde{\mathbb G}(x,t) + (1 -\te) \tilde{\mathbb H}(x,t) ): \Big( {\mathbb G}(x, t) -  {\mathbb H}(x, t) \Big) d\te\\
& = \int_0^1  \Big( D\si (\te \tilde{\mathbb G}(x+y,t+\tau) + (1 -\te) \tilde{\mathbb H}(x +y,t+\tau) ) -D\si (\te \tilde{\mathbb G}(x,t) + (1 -\te) \tilde{\mathbb H}(x,t) )   \Big)\\
& \quad : \Big( {\mathbb G}(x+y, t+\tau) -  {\mathbb H}(x+y, t+\tau) \Big) d\te\\
& +  \int_0^1 D\si (\te \tilde{\mathbb G}(x,t) + (1 -\te) \tilde{\mathbb H}(x,t) ): \Big({\mathbb G}(x+y, t+\tau) -  {\mathbb H}(x+y, t+\tau) - {\mathbb G}(x, t) +  {\mathbb H}(x, t) \Big) d\te\\
& = I_1 + I_2.
\end{split}
\end{align*}
Here, 
\begin{align*}
|I_2| \leq  \|D\si\|_{L^\infty (0, \max(\|\tilde{\mathbb G}\|_{L^\infty(\RN)},  \|\tilde{\mathbb G}\|_{L^\infty(\RN)} )}     \Big| {\mathbb G}(x+y, t+\tau) -  {\mathbb H}(x+y, t+\tau) - {\mathbb G}(x, t) +  {\mathbb H}(x, t)  \Big|  
\end{align*}

For $I_1$, 
\begin{align*}
& D  \si \big(\te \tilde{\mathbb G}(x+y,t+\tau) + (1 -\te) \tilde{\mathbb H}(x +y,t+\tau) \big) -D\si \big(\te \tilde{\mathbb G}(x,t) + (1 -\te) \tilde{\mathbb H}(x,t) \big)   \\
& = \int_0^1 \frac{d}{d \ga}  D \si \Big(  \ga \big( \te \tilde{\mathbb G}(x+y,t+\tau) + (1 -\te) \tilde{\mathbb H}(x +y,t+\tau)    \big) + (1 -\ga) \big(  \te \tilde{\mathbb G}(x,t) + (1 -\te) \tilde{\mathbb H}(x,t) \big)     \Big) d\ga\\
& = \int_0^1    D^2 \si \Big(  \ga \big( \te \tilde{\mathbb G}(x+y,t+\tau) + (1 -\te) \tilde{\mathbb H}(x +y,t+\tau)    \big) + (1 -\ga) \big(  \te \tilde{\mathbb G}(x,t) + (1 -\te) \tilde{\mathbb H}(x,t) \big)     \Big)\\
& :: \Big(\te\big(\tilde {\mathbb G}(x+y, t +\tau)  - \tilde {\mathbb G}(x, t)  \big) + (1 -\te) \big( \tilde {\mathbb H}(x+y, t+\tau)  - \tilde {\mathbb H}(x,t)    \big)   \Big) d\ga.
\end{align*}
Hence, we have 
\begin{align*}
|I_1| & \leq \|D^2\si\|_{L^\infty(0, \max(\|\tilde{\mathbb G}\|_{L^\infty(\RN)},  \|\tilde{\mathbb G}\|_{L^\infty(\RN)} )}     
\Big(\tilde{\mathbb G}\|_{L^\infty(\RN)} +  \|\tilde{\mathbb H}\|_{L^\infty(\RN)} \Big)   \\
& \qquad  \times 
\Big( \big| \tilde {\mathbb G}(x+y, t +\tau)  - \tilde {\mathbb G}(x, t) \big| + \big|    \tilde {\mathbb H}(x+y, t+\tau)  - \tilde {\mathbb H}(x,t)  \big|  \Big) \big| {\mathbb G}(x+y, t+\tau) -  {\mathbb H}(x+y, t+\tau) \big|   \Big).
\end{align*}
Summing all  estimate, we have 
\begin{align*}
\| \si ({\mathbb G}) -\si({\mathbb H})\|_{\dot B^{s,\frac{s}2}_{p,q} } 
& \leq c\| \si (\tilde {\mathbb G}) -\si ( \tilde {\mathbb H})\|_{\dot B^{s,\frac{s}2}_{p,q} (\RN) }\\
  & \leq c    \Big(   \int_{{\mathbb R}^{n +1}}  
\frac1{(|y| +|\tau|^\frac12)^{n+2  + ps  } } \\
& \quad  \times  \big( \int_{{\mathbb R}^{n +1}}  |\si ( \tilde{\mathbb G}(x+y,t+\tau)) - \si ( \tilde{\mathbb H}(x +y,t+\tau) )\\
& \qquad \qquad  -  \si (\tilde{\mathbb G}(x,t))  + \si( \tilde{\mathbb H} (x,t) )  |^p  dxdt \big)^{\frac{q}p}  dyd\tau  \Big)^\frac1q\\
& \leq c \Big( \| \tilde{\mathbb G} - \tilde{\mathbb H}\|_{\dot B^{s, \frac{s}2}_{p,q} (\RN)} + \big( \tilde{\mathbb G}\|_{\dot B^{s, \frac{s}2}_{p,q} (\RN)} +  \|\tilde{\mathbb H}\|_{\dot B^{s, \frac{s}2}_{p,q} (\RN)}    \big) \big(  \| \tilde{\mathbb G}-\tilde{\mathbb H}\|_{L^\infty (\RN)}  \big) \Big)\\
& \leq c \Big( \|  {\mathbb G} -  {\mathbb H}\|_{\dot B^{s, \frac{s}2}_{p,q}  } + \big( \|  {\mathbb G}\|_{\dot B^{s, \frac{s}2}_{p,q}  } +  \| {\mathbb H}\|_{\dot B^{s, \frac{s}2}_{p,q} }    \big) \big(  \|  {\mathbb G}- {\mathbb H}\|_{L^\infty  }  \big) \Big).
\end{align*}
We complete the proof of (2) of  Lemma \ref{lemma0716}.

\section{Proof of Lemma \ref{lemma0601} }
\label{prooflemma0601}
\setcounter{equation}{0}

By  H$\ddot{\rm o}$lder's inequality and Besov  space embedding,  
\begin{align*}
\| u \|_{L^p  } & \leq  \| u \|^{1 -\te}_{L^\infty } \| u \|^{\te}_{L^{n+2} }
 \leq  c   \| u \|^{ 1 -\te}_{\dot B^{\frac{n+2}p, \frac{n+2}{2p}}_{p,1}  }  \| u \|^{\te}_{L^{n+2}  }.
\end{align*}
We complete the proof of \eqref{230519-4}.

Note that  $\dot B^{\frac{n+2}p, \frac{n+2}{2p}}_{p,1}   = ( L^p, \dot W^{1 + \frac{n+2}p, \frac12 + \frac{n+2}{2p}}_{p} )_{\eta,1}$ from (1) of Proposition \ref{prop0215}. Since $ \dot B^{1 + \frac{n+2}p, \frac12 + \frac{n+2}{2p}}_{p,1}  \subset \dot W^{1 + \frac{n+2}p, \frac12 + \frac{n+2}{2p}}_{p} $,  from above estimate,  
\begin{align*}
\| u \|_{\dot B^{\frac{n+2}p, \frac{n+2}{2p}}_{p,1} } &   \leq c \| u \|^{1-\eta}_{L^p  } \| u \|^{\eta}_{\dot W^{1 + \frac{n+2}p, \frac12 + \frac{n+2}{2p}}_{p}  } 
\leq c \big(  \| u \|^{ 1 -\te}_{\dot B^{\frac{n+2}p, \frac{n+2}{2p}}_{p,1}  }  \| u \|^{\te}_{L^{n+2}  }     \big)^{1-\eta} 
          \| u \|^{\eta}_{\dot B^{1 + \frac{n+2}p, \frac12 + \frac{n+2}{2p}}_{p,1}  }.
\end{align*}
Hence, 
\begin{align*}
\| u \|^{ \te + \eta -\te \eta  }_{\dot B^{\frac{n+2}p, \frac{n+2}{2p}}_{p,1}  }
& \leq c   \| u \|^{ \te (1- \eta)}_{L^{n+2} }  \| u \|^{\eta}_{\dot B^{1 + \frac{n+2}p, \frac12 + \frac{n+2}{2p}}_{p,1}  }.
\end{align*}
This implies  \eqref{230519-5}.

\section{Proof Lemma \ref{lemma1208}}
\label{prooflemma1208}
\setcounter{equation}{0}
Because the proofs are exactly same, we only prove the case of $\dot B^{s, \frac{s}2}_{p,q} $. 

If $ f \in  \dot B^{s, \frac{s}2}_{p,q} $, then $E_1 f \in \dot B^{s, \frac{s}2}_{p,q} (\R \times  (0, \infty))$ with $\| E_1 f\|_{\dot B^{s, \frac{s}2}_{p,q} (\R \times (0, \infty) )} \leq c \| f\|_{ \dot B^{s, \frac{s}2}_{p,q}}$, where the  extension operator $E_1$ is defined in Section \ref{notation}.   Hence, we prove Lemma \ref{lemma1208} for the case of $ f \in \dot B^{s, \frac{s}2}_{p,q} (\R \times  (0, \infty))$.

From (5) in Proposition \ref{prop2}, we have
\begin{align*}
\dot B^{s, \frac{s}2}_{p,q}  \subset L^\infty (0, \infty; \dot  B^{s -\frac2p}_{p,q} (\R)).
\end{align*}
Let $ \frac2p< s<2  $
 and $f \in \dot B^{s, \frac{s}2}_{p,q} $ with $ f(x,0) =0$ for $x \in \R$. Let $\tilde f(x,t) = f(x,t)$ for $ t > 0 $ and $\tilde f(x,t) =0$ for $ t < 0$. Accordingly, $ \tilde f \in \dot B^{s, \frac{s}2}_{p,q} ({\mathbb R}^{n+1}) $ with $ \| \tilde f\|_{\dot B^{s, \frac{s}2}_{p,q} (\R \times (-\infty, t) )} \leq c \| f\|_{\dot B^{s, \frac{s}2}_{p,q} (\R \times (0, t))   }$.  From (5) of Proposition \ref{prop2},  
\begin{align*}
\| f(t) \|_{\dot B^{s-\frac2p}_{p,q} (\R)} &\leq c \| \tilde f\|_{ \dot B^{s, \frac{s}2}_{p,q} (\R \times (-\infty, t) )  } \leq c \|  f\|_{ \dot B^{s, \frac{s}2}_{p,q} (\R \times (0, t))  } \ri 0 \quad \mbox{as} \quad t \ri 0.
\end{align*}

Let  $f_0 (x) = f(x,0)$.   From (5) of Proposition \ref{prop2},   $f_0 \in \dot B^{s -\frac2p}_{p,q} (\R)$. From well-known result of the Cauchy problem of the heat equation in $\R \times (0, \infty)$, the function defined by  $F(x,t) = \Ga_t * f_0(x)$ is in  $\dot B^{s, \frac{s}2}_{p,q}$ with $\| F \|_{\dot B^{s, \frac{s}2}_{p,q} } \leq c \| f_0 \|_{\dot B^{s -\frac2p}_{p,q} (\R)}$ and $\| F(t) - f_0 \|_{\dot B^{s -\frac2p}_{p,q} (\R)} \ri 0 $ as $t \ri 0$. Since $F -f \in \dot B^{s, \frac{s}2}_{p,q} $ and $(F - f)|_{t =0} =0$, from the above argument,  $\| F(t) - f (t) \|_{\dot B^{s -\frac2p}_{p,q} (\R)} \ri 0 $ as $t \ri 0 $. Accordingly
\begin{align*}
\| f(t) - f_0\|_{\dot B^{s-\frac2p}_{p,q} (\R)}  \leq
\| f(t) - F(t)\|_{\dot B^{s-\frac2p}_{p,q} (\R)}  + \| F(t) - f_0\|_{\dot B^{s -\frac2p}_{p,q} (\R)}  \ri 0 \quad \mbox{as} \quad t\ri 0.
\end{align*}
This implies
\begin{align*}
\dot B^{\al, \frac{s}2}_{p,q}  \subset C ([0, \infty); \dot  B^{s -\frac2p}_{p,q} (\R)).
\end{align*}
Hence, we proved Lemma \ref{lemma1208} for $ \frac2p < s< 2 $.

Let $  2k +\frac2p< s< 2k +2  $ for $k \in {\mathbb N}$. Accordingly $D_x^{2k} f \in \dot B^{s-2k, \frac{s}2 -k}_{p,q}$ and $D_x^{2k} f_0 \in \dot B^{s-2k -\frac2p}_{p,q} (\R)$. From the above argument,  
\begin{align*}
\| f(t) - f_0 \|_{\dot B_{p,q}^{s-\frac2p} (\R)} \leq \| D_x^{2k} ( f(t) -   f_0 ) \|_{\dot B_{p,q}^{s-2k -\frac2p} (\R)} \ri 0 \quad \mbox{as} \quad  t \ri 0.
\end{align*}
Hence, we proved Lemma \ref{lemma1208} for  $ \frac2p < s  $.

\section*{Acknowledgements}
 Chang (NRF-2020R1A2C1A01102531) and Jin (RS-2023-00280597)
are supported by the Basic Research Program
through the National Research Foundation of Korea
funded by Ministry of Science and ICT.

{\bf AUTHOR DECLARATIONS}\\
{\bf Conflict of Interest}\\
The authors have no conflicts to disclose.\\
{\bf Author Contributions } \\
Tongkeun Chang: Writing – original draft (equal).  Bum Ja Jin:  Writing – original draft (equal).\\ 
{\bf  DATA AVAILABILITY}\\
The data that support the findings of this study are available from the corresponding author upon reasonable request.

\end{document}